\documentclass[pdflatex,sn-mathphys-num]{sn-jnl}

\usepackage{graphicx}%
\usepackage{multirow}%
\usepackage{amssymb}
\usepackage{amsmath}
\usepackage{amsfonts}
\usepackage{mathrsfs}
\usepackage{mathtools}
\usepackage[title]{appendix}%
\usepackage{xcolor}%
\usepackage{textcomp}%
\usepackage{manyfoot}%
\usepackage{booktabs}%
\usepackage{listings}%
\usepackage{enumitem}
\usepackage{centernot}
\usepackage{tikz}
\usetikzlibrary{arrows.meta,positioning,fit,calc,decorations.pathreplacing,shapes.multipart,backgrounds}
\pgfdeclarelayer{background}
\pgfsetlayers{background,main}
\usepackage{cleveref}
\usepackage{graphicx}
\usepackage[ruled,vlined]{algorithm2e}
\DontPrintSemicolon                
\SetKwInput{KwData}{Input}
\SetKwInput{KwResult}{Output}
\SetKwInput{KwInit}{Initialization}

\theoremstyle{thmstyleone}
\newtheorem{theorem}{Theorem}[section]
\newtheorem{proposition}[theorem]{Proposition}
\newtheorem{lemma}[theorem]{Lemma}
\newtheorem{corollary}[theorem]{Corollary}
\theoremstyle{definition}
\newtheorem{definition}[theorem]{Definition}
\newtheorem{assumption}[theorem]{Assumption}
\theoremstyle{remark}

\newcommand{\Om}{\Omega}
\newcommand{\R}{\mathbb{R}}
\newcommand{\C}{\mathbb{C}}
\newcommand{\Tcal}{\mathcal{T}}
\newcommand{\Kcal}{\mathcal{K}}
\newcommand{\Ev}{\mathcal{E}}
\newcommand{\Rcal}{\mathcal{R}}
\newcommand{\Ccal}{\mathcal{C}}
\newcommand{\Lcal}{\mathcal{L}}
\newcommand{\Mm}{\mathcal{M}}
\newcommand{\Gcal}{\mathcal{G}}

\newcommand{\Uad}{\mathcal{U}}
\newcommand{\Bt}{\mathcal{B}_T}
\newcommand{\Btl}{\mathcal{B}_T^{\mathrm{Lip}}}
\newcommand{\Lip}{\operatorname{Lip}}
\newcommand{\norm}[1]{\left\lVert#1\right\rVert}
\newcommand{\abs}[1]{\left\lvert#1\right\rvert}

\numberwithin{equation}{section}

\begin{document}
 
\title{Endogenous Feedback in Size-Structured Transport Equations}  

\author[1]{\fnm{Jiguang} \sur{Yu}}\email{jyu678@bu.edu}
\equalcont{These authors contributed equally to this work as co-first authors.}

\author*[2]{\fnm{Louis Shuo} \sur{Wang}}\email{wang.s41@northeastern.edu}

\affil[1]{\orgdiv{College of Engineering},
  \orgname{Boston University},
  \orgaddress{\city{Boston}, \postcode{02215}, \state{MA}, \country{USA}}}

\affil[2]{\orgdiv{Department of Mathematics},
  \orgname{Northeastern University},
  \orgaddress{\city{Boston}, \postcode{02115}, \state{MA}, \country{USA}}}

\abstract{
We study a nonlinear size-structured transport equation where the endogenous scalar output $E(t)=\int_{l_0}^{l_m}\chi(l)x(t,l)\,dl$ feeds back into velocity and mortality. This principal-coefficient feedback precludes a semilinear perturbation framework. Freezing the feedback path yields a non-autonomous linear evolution, reducing the closed-loop problem to a scalar Volterra fixed point $E=\mathcal K(E)$. Mass balance provides an intrinsic feedback interval, while a Bielecki-norm contraction ensures unique nonnegative weak solutions.

Stationary equilibria satisfy a scalar closure equation $E=\Phi(E)$. We prove uniqueness below the sharp margin $1-\Phi'(E)>0$ and identify $\Phi'(E)=1$ as a nondegenerate fold threshold. Linearization yields a finite-memory renewal equation with characteristic equation $\mathcal E(\lambda)=1$, whose root set determines the feedback spectrum and stability. Finally, the stationary harvesting adjoint reduces to a rank-one perturbation formula. At zero discount, we establish the identity $\mathcal E(0)=\Phi'(E^*)=B(0)$, linking closure resonance, spectral crossing, and adjoint loop gain.
}

\keywords{endogenous feedback, Volterra fixed points, renewal equations, spectral stability, rank-one perturbations, optimal harvesting}

\pacs[MSC Classification]{35Q92, 35Q93, 35F16, 45D05, 47D06, 49K20, 92D25}

\maketitle

\section{Introduction}
\label{sec:introduction}

Let $\Omega=[l_0,l_m]\Subset(0,\infty)$ with $0<l_0<l_m<\infty$, and let $x(t,l)\ge0$ denote the density of individuals structured by size $l$. In this paper, we study controlled transport equations of the form
\begin{equation}
\label{eq:intro-state}
\partial_t x(t,l)
+
\partial_l\!\big(g(E(t),l)x(t,l)\big)
=
-\big(\mu(E(t),l)+u(t,l)\big)x(t,l),
\qquad l\in(l_0,l_m),
\end{equation}
where the scalar feedback variable is the endogenous output
\begin{equation}
\label{eq:intro-output}
E(t)=\int_{l_0}^{l_m}\chi(l)x(t,l)\,dl .
\end{equation}
Thus, the model exhibits a closed feedback architecture given by $x \longmapsto E=\langle\chi,x\rangle \longmapsto \big(g(E,\cdot),\mu(E,\cdot)\big) \longmapsto x$. For a prescribed path $E$, equation~\eqref{eq:intro-state} is a linear non-autonomous transport equation. In the closed-loop problem, however, $E$ is not prescribed; it must satisfy the causal fixed-point relation
\begin{equation}
\label{eq:intro-fixed-point}
E=\mathcal K(E),
\qquad
\mathcal K(E)(t)
:=
\int_{l_0}^{l_m}
\chi(l)\,\mathcal T_E(x_0,p,u)(t,l)\,dl ,
\end{equation}
where $\mathcal T_E$ denotes the frozen-path transport solution operator. The resulting problem is therefore not merely a semilinear perturbation of a fixed generator. Because the feedback enters the principal transport coefficient $g(E(t),l)$, the transport geometry itself depends on the state through the output $E(t)$. This places the model within the tradition of nonlinear age- and size-structured population dynamics; see, for example, \cite{diekmann1998formulation,gurtin1974non,iannelli1995mathematical,metz2014dynamics,perthame2007transport,thieme2018mathematics,webb1985theory,liu2026ftu}.

Recruitment in the system is imposed through the inflow flux boundary condition and initial state
\begin{equation}
\label{eq:intro-flux-bc}
g(E(t),l_0)x(t,l_0)=p(t),
\qquad
x(0,l)=x_0(l)\ge0 .
\end{equation}
Integrating \eqref{eq:intro-state} over $\Omega$ gives, at the formal level,
\[
\frac{d}{dt}\int_{l_0}^{l_m}x(t,l)\,dl
=
p(t)
-
g(E(t),l_m)x(t,l_m)
-
\int_{l_0}^{l_m}
(\mu(E(t),l)+u(t,l))x(t,l)\,dl .
\]
Because the outflow term and the removal term are non-negative, one obtains the key a priori bound $\|x(t,\cdot)\|_{L^1(\Omega)} \le \|x_0\|_{L^1(\Omega)} + \int_0^t p(s)\,ds$. Consequently,
\begin{equation}
\label{eq:intro-MT}
0\le E(t)\le
\|\chi\|_{L^\infty(\Omega)}
\left(
\|x_0\|_{L^1(\Omega)}
+
\int_0^T p(s)\,ds
\right)
=:M_T .
\end{equation}
Thus, the admissible feedback interval $[0,M_T]$ is naturally derived directly from the intrinsic system dynamics rather than being imposed as an external compactness assumption.

For a stationary control $u=u(l)$ and constant input $p>0$, the frozen equilibrium at a fixed feedback level $E$ is given by
\[
x_E(l)=
\frac{p}{g(E,l)}
\exp\!\left[
-\int_{l_0}^{l}
\frac{\mu(E,\xi)+u(\xi)}{g(E,\xi)}\,d\xi
\right].
\]
The stationary closed-loop equation then reads $E=\Phi(E)$, where $\Phi(E):=\int_{l_0}^{l_m}\chi(l)x_E(l)\,dl$. Defining $H(E):=E-\Phi(E)$, the sharp local degeneracy condition is $H'(E^*)=0$, which is equivalent to $\Phi'(E^*)=1$. This condition marks the closure-resonance threshold. In a one-parameter family $H(E,\eta)=E-\Phi(E;\eta)$, the non-degenerate fold conditions are characterized by $H(E_0,\eta_0)=0$, $H_E(E_0,\eta_0)=0$, $H_{EE}(E_0,\eta_0)\ne0$, and $H_\eta(E_0,\eta_0)\ne0$.

The same scalar quantity governs the stationary adjoint feedback structure; for optimal harvesting of structured populations and the associated adjoint systems, see \cite{anita2013analysis,wang2025analysis}. For the discounted yield
\[
J_r(u)=
\int_0^\infty e^{-rt}
\int_{l_0}^{l_m}\pi(l)u(t,l)x(t,l)\,dl\,dt ,
\]
the frozen current-value adjoint has the form
\[
\mathcal{L}_r^*\lambda
=
-g^*\lambda'
+(r+\mu^*+u)\lambda,
\qquad
\lambda(l_m)=0 .
\]
The scalar feedback produces a rank-one perturbation $\big(\mathcal{L}_r^*-\chi\langle\sigma,\cdot\rangle\big)\lambda = q_{\rm red}$, where the co-load $\sigma$ contains the boundary Dirac correction induced by the flux constraint \eqref{eq:intro-flux-bc}. Hence,
\[
\lambda
=
R_r q_{\rm red}
+
\frac{\langle\sigma,R_r q_{\rm red}\rangle}
     {1-\langle\sigma,R_r\chi\rangle}
R_r\chi,
\qquad
R_r:=(\mathcal{L}_r^*)^{-1}.
\]
At $r=0$, the forward sensitivity, adjoint loop gain, and linearized characteristic function satisfy the headline identity $\mathcal{E}(0)=\Phi'(E^*)=B(0)$. Consequently,
\[
\Phi'(E^*)=1
\qquad\Longleftrightarrow\qquad
\mathcal{E}(0)=1
\qquad\Longleftrightarrow\qquad
B(0)=1 .
\]
Thus, the scalar margin $1-\Phi'(E^*)$ is simultaneously the stationary closure margin, the zero-eigenvalue margin of the linearized renewal equation, and the zero-discount rank-one adjoint margin. Away from the zero crossing, linearized stability is determined by the full root set $\mathcal{E}(\lambda)=1$, rather than by $\Phi'(E^*)$ alone.

The organization of this paper is as follows. Section~\ref{sec:model} introduces the model, foundational assumptions, weak formulation, frozen-path operator, stationary closure, and the stationary harvesting framework. Section~\ref{sec:main-results} presents the main theoretical results. The detailed mathematical proofs are provided in Section~\ref{sec:proofs}. Section~\ref{sec:conclusion} concludes the paper. Finally, the appendix contains essential auxiliary transport estimates alongside the nonlinear stability verification.

\section{Literature Review}
The study of physiologically structured population models, alongside their simplification into delay or integral equations, is a well-established field of research. Classical works frequently utilize the transport (McKendrick--von Foerster) equations to describe age- and size-structured dynamics, analyzing them via semigroup theory and the method of characteristics~\cite{gurtin1974non,liang2025global,metz2014dynamics,wang2026damage,webb1985theory,yu2026microscopic,yu2026rigorous}. Comprehensive textbook treatments covering the associated mathematical modeling, analysis, and bifurcation theory can be found in~\cite{cushing1998introduction,iannelli1995mathematical,iannelli2017basic,metz2014dynamics,murray2007mathematical,perthame2007transport,liu2025bidirectional,de1997gentle,thieme2018mathematics}. Methodical approaches to reformulating nonlinear structured systems featuring feedback---typically mediated by one or multiple scalar environmental interaction variables---have been extensively explored in~\cite{akimenko2018two,barril2022formulation,barril2024hierarchical,diekmann2001formulation,diekmann2003steady,diekmann1998formulation,diekmann2020finite,diekmann2020models,liu2026computational}. This prior work directly inspires our approach of defining the closed-loop system as a fixed-point problem with respect to the environmental input $E$. The theoretical framework utilizing abstract semilinear equations and integrated semigroups for these structured problems has been rigorously established in~\cite{ducrot2021integrated,kang2022age,magal2018theory,wang2025analysis1}. Furthermore, investigations into the local stability and bifurcation behavior of prototypical feedback models have been conducted in~\cite{diekmann2010daphnia,scarabel2021numerical,wang2026algebraic}. Results concerning the existence and stability of solutions in transport models incorporating feedback, primarily utilizing characteristic curves and fixed-point theorems, are presented in~\cite{bartlomiejczyk2015existence,calsina1995model,yu2026pattern} and similar literature. From a dynamical systems perspective, it is customary within the delay and renewal equation framework to reduce linearized structured models to Volterra convolution (renewal) equations and to determine stability via characteristic equations~\cite{diekmann2012delay,hale2013introduction,wang2026breakdown}. The necessary theoretical foundations for convolution resolvents and spectrum-determined growth are provided by the theory of Volterra integral equations involving measure kernels~\cite{gripenberg1990volterra,yu2026beyond} as well as standard semigroup theory~\cite{engel2000one,wang2026elliptic}. The one-dimensional transport estimates and $L^1$-contraction properties employed in our work are conventional for linear conservation laws dealing with bounded variation ($BV$) data~\cite{bressan2000hyperbolic,bressan2023remark,diperna1989ordinary,wang2025multi}. Additionally, we consistently apply the $BV$ calculus techniques detailed in~\cite{ambrosio2000functions,gao2022rolling}. Lastly, there exists a vast body of literature dedicated to optimal harvesting and control problems within size- and age-structured dynamics, which includes the derivation of Pontryagin maximum principles and associated optimality systems~\cite{anita2013analysis,brokate1985pontryagin,feichtinger2003optimality,cai2026optimal,filho2025mathematical,kato2024measure,kumar2023stability,ni2023optimal,wu2025age,wu2025analysis}. An additional segment of the relevant literature concentrates on selective harvesting policies within structured populations, aiming to determine the optimal demographic segments to harvest in order to maximize long-term or discounted economic profits \citep{easterling2000size,fadlovich2025selectivity,stubberud2019effects,liang2026separation}. While this current manuscript focuses on establishing the well-posedness and stability properties of the closed-loop system, the corresponding stationary optimal control problem is reserved for future investigations.

Within this context, the primary contributions of our study are threefold. First, our utilization of the inflow flux framework renders the associated feedback interval $[0,M_T]$ intrinsic: rather than being introduced as an a priori boundedness restriction, it naturally emerges from a mass conservation property (Equation~\ref{eq:model-MT}). Second, we demonstrate that the stationary closure slope permits a precise decomposition, $\Phi'=\Rcal-\Ccal$, separating into residence-time amplification and cumulative survival attenuation (Proposition~\ref{prop:main-decomp}), thereby clarifying the algebraic sign of this slope. Third, and of greatest significance, we identify a unique scalar margin, $1-\Phi'(E^*)$, and establish that this identical quantity concurrently controls the breakdown of local closure invertibility (i.e., a stationary fold) and the zero characteristic root of the linearized closed-loop system. This is formalized via the equivalence $\Ev(0)=\Phi'(E^*)$ concerning the characteristic function $\Ev$ associated with a finite-memory scalar renewal equation. This phenomenon of closure-resonance, which intrinsically links a stationary bifurcation to a dynamic spectral boundary using a single, explicitly computable scalar, serves as the fundamental organizing concept of this manuscript.
\section{Model}
\label{sec:model}
\subsection{Closed-loop transport equation}
\label{subsec:model-equation}
Let
\[
\Om=[l_0,l_m]\Subset(0,\infty),
\qquad
0<l_0<l_m<\infty,
\qquad
T>0 .
\]
The state is
\[
x:[0,T]\times\Om\to[0,\infty),
\qquad
(t,l)\mapsto x(t,l),
\]
and the scalar feedback output is
$\displaystyle E(t)=\int_{l_0}^{l_m}\chi(l)x(t,l)\,dl$.
The closed-loop transport equation is
\begin{equation}
\label{eq:model-state}
\partial_t x(t,l)
+
\partial_l\!\big(g(E(t),l)x(t,l)\big)
=
-\big(\mu(E(t),l)+u(t,l)\big)x(t,l),
\qquad
(t,l)\in(0,T)\times(l_0,l_m).
\end{equation}
The recruitment condition is imposed as an inflow flux:
\[
g(E(t),l_0)x(t,l_0)=p(t),
\qquad
t\in(0,T),
\]
with initial condition
\begin{equation}
\label{eq:model-initial}
x(0,l)=x_0(l),
\qquad
l\in\Om .
\end{equation}
For sufficiently regular \(g\), we have the nondivergence form
\begin{equation}
\label{eq:model-nonconservative}
\partial_t x
+
g(E(t),l)\partial_l x
=
-\big(\partial_l g(E(t),l)+\mu(E(t),l)+u(t,l)\big)x .
\end{equation}
Define the total mass
$\displaystyle M_x(t):=\int_{l_0}^{l_m}x(t,l)\,dl $.
Formally,
\[
\frac{d}{dt}M_x(t)
=
p(t)-g(E(t),l_m)x(t,l_m)
-\int_{l_0}^{l_m}\big(\mu(E(t),l)+u(t,l)\big)x(t,l)\,dl .
\]
Thus the natural feedback box on \([0,T]\) is
\begin{equation}
\label{eq:model-MT}
0\le E(t)\le M_T,
\qquad
M_T:=
\norm{\chi}_{L^\infty(\Om)}
\left(
\norm{x_0}_{L^1(\Om)}+\int_0^T p(s)\,ds
\right).
\end{equation}

As illustrated in Figure~\ref{fig:fig1}, the core mathematical architecture relies on a nonlinear feedback loop. This endogenous feedback distinguishes the problem from a semilinear perturbation and establishes the framework for the Volterra fixed-point approach.

\begin{figure}[htbp]
    \centering
    \includegraphics[width=0.8\textwidth]{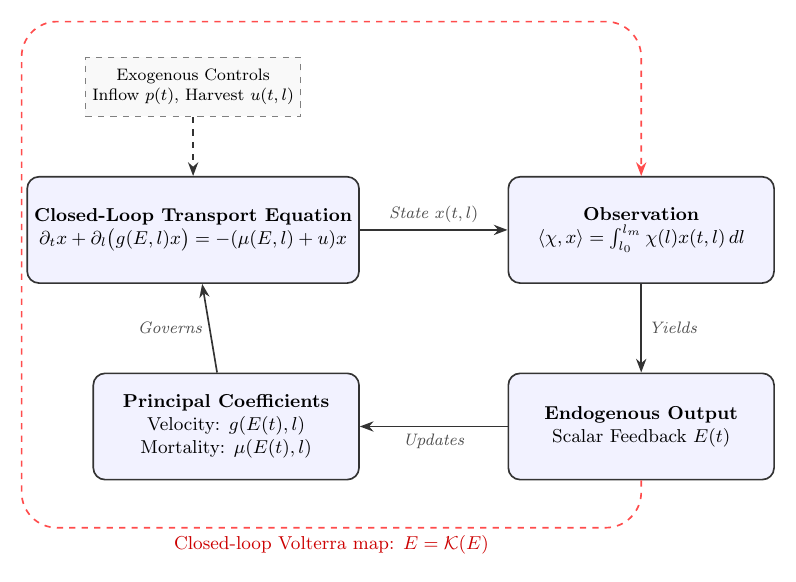}
    \caption{The Closed-Loop Feedback Architecture. A conceptual block diagram illustrating the nonlinear feedback loop. The state density $x(t,l)$ integrates against $\chi(l)$ to produce the scalar output $E(t)$, which then feeds into the principal velocity $g(E,l)$ and mortality $\mu(E,l)$ to dictate the transport operator governing $x(t,l)$.}
    \label{fig:fig1}
\end{figure}

\subsection{Standing assumptions}
\label{subsec:model-assumptions}
All constants below are understood on the parameter box
$\displaystyle [0,M_T]\times\Om $.
\begin{assumption}[Coefficients]
\label{ass:model-coefficients}
For every \(M>0\), the maps
\[
g:[0,\infty)\times\Omega\to(0,\infty),
\qquad
\mu:[0,\infty)\times\Omega\to[0,\infty)
\]
are Borel in \(l\), locally Lipschitz in \(E\), and locally
\(W^{1,\infty}\) in the size variable. On the feedback box
\([0,M_T]\times\Omega\), there exist constants
\(g_M>0\), \(C_M>0\), \(L_M>0\), and \(C_2\ge C_M\) such that, for all
\(E,E'\in[0,M_T]\) and a.e. \(l\in\Omega\),
\begin{align}
&g(E,l)\ge g_M, \label{eq:model-ass-gfloor}\\
&|g(E,l)|+|\mu(E,l)|
+|\partial_Eg(E,l)|+|\partial_lg(E,l)|
+|\partial_E\mu(E,l)|+|\partial_l\mu(E,l)|
\le C_M, \label{eq:model-ass-first}\\
&\|g(E,\cdot)-g(E',\cdot)\|_{W^{1,\infty}(\Omega)}
+
\|\mu(E,\cdot)-\mu(E',\cdot)\|_{W^{1,\infty}(\Omega)}
\le L_M|E-E'|, \label{eq:model-ass-Elip}\\
&\|\partial_{ll}g(E,\cdot)\|_{L^\infty(\Omega)}
+
\|\partial_l\mu(E,\cdot)\|_{L^\infty(\Omega)}
\le C_2 . \label{eq:model-ass-second}
\end{align}
\end{assumption}
\begin{assumption}[Data, observation, and controls]
\label{ass:model-data}
\[
x_0\in BV_+(\Omega)\cap L^\infty_+(\Omega),
\qquad
p\in W^{1,\infty}_+(0,T),
\qquad
\chi\in W^{1,\infty}(\Omega)\cap L^\infty_+(\Omega).
\]
The admissible controls are
\[
\Uad(T):=
\left\{
u\in L^\infty(0,T;W^{1,\infty}(\Om)):
0\le u\le u_{\max}\ \text{a.e.},
\qquad
\norm{\partial_l u}_{L^\infty((0,T)\times\Om)}\le C_2
\right\}.
\]
\end{assumption}
Set
$\displaystyle \Theta_M:=2C_M+u_{\max}$.
Then, on \([0,M_T]\times\Om\),
\begin{equation}
\label{eq:model-basic-bounds}
0<g_M\le g(E,l)\le C_M,
\qquad
0\le \mu(E,l)+u(t,l)\le \Theta_M,
\qquad
\abs{\partial_l g(E,l)}\le C_M .
\end{equation}
For later stationary analysis, when differentiability in \(E\) beyond first order
is needed, we use the additional hypothesis
\begin{equation}
\label{eq:model-C2E}
\norm{\partial_{EE}g(E,\cdot)}_{W^{1,\infty}(\Om)}
+
\norm{\partial_{EE}\mu(E,\cdot)}_{L^\infty(\Om)}
\le C_2',
\qquad
E\in[0,M_T].
\end{equation}
\subsection{Weak solutions and frozen-path operator}
\label{subsec:model-weak}
Let
\[
X:=L^1(\Om),
\qquad
X_+:=L^1_+(\Om),
\qquad
\langle f,h\rangle:=\int_{l_0}^{l_m}f(l)h(l)\,dl .
\]
\begin{definition}[Closed-loop weak solution]
\label{def:model-weak-solution}
A pair
\[
(x,E)\in L^\infty(0,T;L^1_+(\Omega))\times L^\infty_+(0,T)
\]
is a closed-loop weak solution of
\eqref{eq:model-state}--\eqref{eq:model-initial} on \([0,T]\) if
\[
E(t)=\int_{l_0}^{l_m}\chi(l)x(t,l)\,dl
\qquad\text{for a.e. }t\in(0,T),
\]
and, for every
\[
\varphi\in C^1([0,T]\times[l_0,l_m]),
\qquad
\varphi(T,\cdot)=0,
\qquad
\varphi(t,l_m)=0,
\]
one has
\begin{align}
0={}&
\int_0^T\int_{l_0}^{l_m}
x(t,l)
\Big[
\partial_t\varphi(t,l)
+
g(E(t),l)\partial_l\varphi(t,l)
-
(\mu(E(t),l)+u(t,l))\varphi(t,l)
\Big]\,dl\,dt
\nonumber\\
&+
\int_{l_0}^{l_m}x_0(l)\varphi(0,l)\,dl
+
\int_0^T p(t)\varphi(t,l_0)\,dt .
\label{eq:model-weak-form}
\end{align}
\end{definition}
For a prescribed path
\[
E\in L^\infty(0,T),
\qquad
0\le E(t)\le M_T\qquad\text{a.e.},
\]
define the frozen transport problem
\begin{align}
&\partial_t x_E+\partial_l\big(g(E(t),l)x_E\big)
=
-\big(\mu(E(t),l)+u(t,l)\big)x_E,
\label{eq:model-frozen-state}
\\
&g(E(t),l_0)x_E(t,l_0)=p(t),
\qquad
x_E(0,l)=x_0(l).
\label{eq:model-frozen-bc}
\end{align}
The frozen-path solution operator is denoted by
$\displaystyle \Tcal_E(x_0,p,u):=x_E$.
The closed-loop output map is
$\displaystyle \Kcal(E)(t)
:=
\int_{l_0}^{l_m}\chi(l)\Tcal_E(x_0,p,u)(t,l)\,dl$.
The closed-loop problem is
$\displaystyle  E=\Kcal(E)$.
Define
$\displaystyle \Bt:=
\left\{
E\in L^\infty(0,T):
0\le E(t)\le M_T\ \text{a.e.}
\right\}$.
Let
\[
M_x(T):=\norm{x_0}_{L^1(\Om)}+\int_0^T p(s)\,ds,
\qquad
K_\infty:=
e^{C_M T}
\max\left\{
\norm{x_0}_{L^\infty(\Om)},
\frac{\norm{p}_{L^\infty(0,T)}}{g_M}
\right\},
\]
and set
\begin{equation}
\label{eq:model-Lambdastar}
\Lambda^*:=
\norm{\chi}_{L^\infty(\Om)}\norm{p}_{L^\infty(0,T)}
+
\Big(
C_M\norm{\chi'}_{L^\infty(\Om)}
+\Theta_M\norm{\chi}_{L^\infty(\Om)}
\Big)M_x(T)
+
C_M\norm{\chi}_{L^\infty(\Om)}K_\infty .
\end{equation}
The iteration class is
$\displaystyle \Btl:=
\left\{
E\in\Bt:
\Lip(E)\le \Lambda^*
\right\}$.
For \(\beta>0\), define the Bielecki norm
\begin{equation}
\label{eq:model-Bielecki}
\norm{E}_\beta
:=
\operatorname*{ess\,sup}_{0\le t\le T}
e^{-\beta t}\abs{E(t)} .
\end{equation}
\subsection{Stationary profiles and scalar closure}
\label{subsec:model-stationary}
For stationary controls and inflow,
$u(t,l)=u(l)$ and
$p(t)=p>0$,
a stationary frozen profile \(x_E\) at level \(E\ge0\) solves
\[
\frac{d}{dl}\big(g(E,l)x_E(l)\big)
=
-\big(\mu(E,l)+u(l)\big)x_E(l),
\qquad
g(E,l_0)x_E(l_0)=p .
\]
Thus
\begin{equation}
\label{eq:model-stationary-profile}
x_E(l)
=
\frac{p}{g(E,l)}
\exp\!\left[
-\int_{l_0}^{l}
\frac{\mu(E,\xi)+u(\xi)}{g(E,\xi)}\,d\xi
\right].
\end{equation}
The scalar stationary closure map is
$\displaystyle \Phi(E):=
\int_{l_0}^{l_m}\chi(l)x_E(l)\,dl$.
The closed-loop stationary equation is
\[
E=\Phi(E),
\qquad
H(E):=E-\Phi(E)=0 .
\]
For parameter-dependent families,
\[
g=g(E,l;\eta),
\qquad
\mu=\mu(E,l;\eta),
\qquad
u=u(l;\eta),
\qquad
p=p(\eta),
\]
write
$\displaystyle \Phi(E;\eta)
=
\int_{l_0}^{l_m}\chi(l)x_{E,\eta}(l)\,dl$ and
$\displaystyle H(E,\eta):=E-\Phi(E;\eta)$.
The closure-resonance condition is
\[
H_E(E^*,\eta)=0
\qquad\Longleftrightarrow\qquad
\Phi_E(E^*;\eta)=1 .
\]
For later use, set
\[
\alpha(E,l):=-\frac{\partial_E g(E,l)}{g(E,l)},
\qquad
b(E,l):=
\partial_E\left(\frac{\mu(E,l)+u(l)}{g(E,l)}\right).
\]
\subsection{Stationary harvesting problem}
\label{subsec:model-control}
Let
\[
r\ge0,
\qquad
\pi\in L^\infty_+(\Om),
\qquad
u\in\Uad_\mathrm{stat}:=
\left\{
u\in W^{1,\infty}(\Om):
0\le u\le u_{\max},
\qquad
\norm{u'}_{L^\infty(\Om)}\le C_2
\right\}.
\]
The discounted harvesting functional is
\[
J_r(u)
:=
\int_0^\infty e^{-rt}
\int_{l_0}^{l_m}
\pi(l)u(t,l)x(t,l)\,dl\,dt .
\]
For stationary triples \((x^*,E^*,u)\),
\begin{align}
&\frac{d}{dl}\big(g(E^*,l)x^*(l)\big)
=
-\big(\mu(E^*,l)+u(l)\big)x^*(l),
\label{eq:model-stationary-state}
\\
&g(E^*,l_0)x^*(l_0)=p,
\qquad
E^*=\int_{l_0}^{l_m}\chi(l)x^*(l)\,dl .
\label{eq:model-stationary-state-bc}
\end{align}
Write
$\displaystyle g^*(l):=g(E^*,l)$ and
$\displaystyle \mu^*(l):=\mu(E^*,l)$.
The frozen current-value adjoint operator is
\[
\Lcal_r^*\lambda
:=
-g^*(l)\lambda'(l)
+
\big(r+\mu^*(l)+u(l)\big)\lambda(l),
\qquad
\lambda(l_m)=0.
\]
Its inverse is denoted
$\displaystyle R_r:=(\Lcal_r^*)^{-1}$,
that is,
\[
(R_r f)(l)
=
\int_l^{l_m}
\frac{f(s)}{g^*(s)}
\exp\!\left[
-\int_l^s
\frac{r+\mu^*(\xi)+u(\xi)}{g^*(\xi)}\,d\xi
\right]ds.
\]
For the harvesting payoff,
$\displaystyle 
q_{\rm red}(l):=\pi(l)u(l)$.
Define
\[
a(l):=(\partial_E g)(E^*,l)x^*(l),
\qquad
m(l):=(\partial_E\mu)(E^*,l)x^*(l),
\qquad
a_0:=a(l_0).
\]
The corrected feedback co-load is the distribution
$\displaystyle \sigma
:=
-\frac{d}{dl}a-m-a_0\delta_{l_0}$.
Equivalently, for admissible test functions \(\lambda\) with \(\lambda(l_m)=0\),
\[
\langle\sigma,\lambda\rangle
=
\int_{l_0}^{l_m}
x^*(l)
\Big[
(\partial_E g)(E^*,l)\lambda'(l)
-
(\partial_E\mu)(E^*,l)\lambda(l)
\Big]\,dl.
\]
The full stationary adjoint equation has rank-one form
$\displaystyle \big(\Lcal_r^*-\chi\langle\sigma,\cdot\rangle\big)\lambda
=
q_{\rm red}$.
Set
\[
\lambda_{\rm red}:=R_r q_{\rm red},
\qquad
\psi_r:=R_r\chi,
\qquad
A(r):=\langle\sigma,\lambda_{\rm red}\rangle,
\qquad
B(r):=\langle\sigma,\psi_r\rangle .
\]
The stationary switching function is
$\displaystyle S(l):=\pi(l)-\lambda(l)$.
On nonsingular arcs,
$\displaystyle 
u(l)=
\begin{cases}
u_{\max}, & S(l)>0,\\
0, & S(l)<0.
\end{cases}$
\section{Main Results}
\label{sec:main-results}
\subsection{Closed-loop well-posedness}
\label{subsec:main-wellposedness}
The closed-loop equation is reduced to the scalar Volterra fixed point
\[
E=\Kcal(E),
\qquad
\Kcal(E)(t)=\langle \chi,\Tcal_E(x_0,p,u)(t,\cdot)\rangle .
\]
\begin{theorem}[Frozen-path transport estimates]
\label{thm:main-frozen}
Let \(T>0\), \(E\in\Bt\), and suppose
Assumptions~\ref{ass:model-coefficients}--\ref{ass:model-data} hold. Then the
frozen problem \eqref{eq:model-frozen-state}--\eqref{eq:model-frozen-bc} has a
unique nonnegative weak solution
$\displaystyle 
x_E=\Tcal_E(x_0,p,u)\in L^\infty(0,T;L^1_+(\Om))$.
Moreover,
\[
\norm{x_E(t,\cdot)}_{L^1(\Om)}
\le
\norm{x_0}_{L^1(\Om)}+\int_0^t p(s)\,ds,
\]
\[
\norm{x_E(t,\cdot)}_{L^\infty(\Om)}
\le
K_\infty
:=
e^{C_M T}
\max\left\{
\norm{x_0}_{L^\infty(\Om)},
\frac{\norm{p}_{L^\infty(0,T)}}{g_M}
\right\}.
\]
If \(E\in\Btl\), then
$\displaystyle 
x_E(t,\cdot)\in BV(\Om)$ with 
$\displaystyle \sup_{0\le t\le T}\abs{x_E(t,\cdot)}_{BV(\Om)}
\le C_{BV}$.
\end{theorem}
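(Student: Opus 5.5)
We will construct $x_E$ by the method of characteristics and then verify the weak formulation and the estimates directly. For frozen $E\in\Bt$, the velocity field $(t,l)\mapsto g(E(t),l)$ is measurable in $t$, Lipschitz in $l$ (by \eqref{eq:model-ass-first}), and bounded below by $g_M>0$. Carathéodory theory therefore gives a unique absolutely continuous flow $X(s;t,l)$ solving $\partial_s X=g(E(s),X)$. Because $g\ge g_M$, every characteristic through $(t,l)$ traced backward either reaches $s=0$ at a point $l^0\in\Om$, or exits through $l_0$ at an entry time $\tau(t,l)\in(0,t)$. This splits $(0,T)\times\Om$ into two regions separated by the characteristic emanating from $(0,l_0)$. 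Along characteristics the nondivergence form \eqref{eq:model-nonconservative} reads $\frac{d}{ds}x=-(\partial_lg+\mu+u)x$. This gives the explicit representation
\[
x_E(t,l)=x_0(l^0)\,e^{-\int_0^t(\partial_lg+\mu+u)\,ds}\ \text{ or }\ x_E(t,l)=\frac{p(\tau)}{g(E(\tau),l_0)}\,e^{-\int_\tau^t(\partial_lg+\mu+u)\,ds},
\]
with the integrands evaluated along $X$. Nonnegativity is immediate. For $\varphi$ as in Definition~\ref{def:model-weak-solution}, we change variables $(t,l)\mapsto(s,\text{foot point})$, using the Jacobian $\partial_l X=\exp\int\partial_lg$; this verifies \eqref{eq:model-weak-form}. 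For uniqueness we use duality. The difference $w$ of two weak solutions satisfies the weak form with zero data. Given any smooth $\psi$, we solve the backward adjoint problem $\partial_t\varphi+g\partial_l\varphi-(\mu+u)\varphi=\psi$ with $\varphi(T)=0$ and $\varphi(t,l_m)=0$, again by characteristics; this boundary condition is admissible because $l_m$ is the outflow boundary of the backward problem. Since $E$ is only measurable in $t$, $\varphi$ is merely Lipschitz, so we mollify $E$ in time and pass to the limit. This gives $\iint w\psi=0$, hence $w=0$.

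For the $L^1$ bound, we take $\varphi\equiv1$, cut off smoothly near $l_m$ and near time $t$, in the weak form. Equivalently, we integrate the representation: the outflow $g(E,l_m)x(t,l_m)\ge0$ and the removal $(\mu+u)x\ge0$ only decrease mass. This yields $\norm{x_E(t)}_{L^1}\le\norm{x_0}_{L^1}+\int_0^tp$. The $L^\infty$ bound follows directly from the representation. The prefactor is at most $\norm{x_0}_{L^\infty}$ or $\norm{p}_{L^\infty}/g_M$, and the exponential is at most $e^{C_MT}$, because $-(\partial_lg+\mu+u)\le|\partial_lg|\le C_M$ and $\mu+u\ge0$. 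This gives exactly $K_\infty$.

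The main obstacle is the uniform $BV$ bound for $E\in\Btl$. We bound the total variation of $x_E(t,\cdot)$ separately on the initial-data region, on the boundary-data region, and for the jump across the separating characteristic. On the initial region, the map $l\mapsto l^0$ is monotone and bi-Lipschitz, so $x_0\circ(\text{foot})$ has variation at most $|x_0|_{BV}$. The exponential weight has $l$-derivative bounded via $\partial_l X$ together with $\norm{\partial_{ll}g}_\infty$, $\norm{\partial_l\mu}_\infty$ and $\norm{\partial_lu}_\infty\le C_2$ from \eqref{eq:model-ass-second}. The product rule in $BV$ then gives a bound of the form $e^{CT}(|x_0|_{BV}+C\norm{x_0}_{L^\infty})$. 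On the boundary region, $l\mapsto\tau(t,l)$ is monotone with $|\partial_l\tau|\le1/g_M$. Hence the variation of $p(\tau)/g(E(\tau),l_0)$ is controlled by $\norm{p'}_\infty/g_M$ plus $L_M\Lip(E)/g_M\le L_M\Lambda^*/g_M$ times $\norm{p}_\infty/g_M^2$. This is precisely where the Lipschitz class $\Btl$ is needed: a merely bounded $E$ could oscillate and destroy $BV$ regularity along the inflow. The interface jump is at most $2K_\infty$. Summing the three contributions gives $C_{BV}$, depending only on $T$, the data and the constants of Assumptions~\ref{ass:model-coefficients}--\ref{ass:model-data}, and not on the particular $E\in\Btl$.
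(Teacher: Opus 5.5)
Your proposal is correct and follows essentially the paper's route: characteristics with the explicit foot-point representation, the mass bound by integrating the equation, the sup bound read off the formula, and the $BV$ bound by differentiating along characteristics. It departs in two places, both fine: you prove uniqueness by duality where the paper cites DiPerna--Lions, and your region-by-region $BV$ count (monotone reparametrization of $x_0$, which also covers general $BV$ data with countably many jumps, plus the interface jump) correctly locates the need for $\Lip(E)\le\Lambda^*$ in the inflow datum $p(\tau)/g(E(\tau),l_0)$.

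There are two harmless slips.

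\begin{itemize}
\item For time-dependent speed, characteristics can compress after entry, so one only has $\abs{\partial_l\tau}\le e^{C_MT}/g_M$, not $1/g_M$. This bound is still independent of $E$. For the prefactor you do not even need it, since monotonicity of $l\mapsto\tau(t,l)$ bounds its variation by $T\,\Lip\big(p/g(E(\cdot),l_0)\big)$.
\item Mollifying $E$ does not make the backward adjoint solution $C^1$. The coefficients $\mu$ and $u$ are only Lipschitz in $l$, and $u$ is only bounded in $t$. Instead, extend the weak formulation to Lipschitz test functions vanishing at $t=T$ and $l=l_m$ by a routine density argument.
\end{itemize}
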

\begin{lemma}[Output regularity]
\label{lem:main-output-regularity}
For every \(E\in\Bt\),
$\Kcal(E)\in W^{1,\infty}(0,T)$ and
$\Lip(\Kcal(E))\le \Lambda^*$,
where \(\Lambda^*\) is given by \eqref{eq:model-Lambdastar}. Hence
$\displaystyle \Kcal(\Bt)\subset\Btl$.
\end{lemma}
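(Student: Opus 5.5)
The idea is to compute the time derivative of $\Kcal(E)(t)=\langle\chi,x_E(t,\cdot)\rangle$ from the weak formulation, tested against $\chi$, and bound each resulting term by the constants in \eqref{eq:model-Lambdastar}. The structure of $\Lambda^*$ points to the identity, valid formally for smooth data,
\[
\frac{d}{dt}\langle\chi,x_E(t)\rangle
=
\chi(l_0)p(t)-\chi(l_m)g(E(t),l_m)x_E(t,l_m)
+\int_{l_0}^{l_m}\Big[g(E(t),l)\chi'(l)-(\mu(E(t),l)+u(t,l))\chi(l)\Big]x_E(t,l)\,dl .
\]
This follows by multiplying \eqref{eq:model-frozen-state} by $\chi$, integrating by parts in $l$, and using the flux condition \eqref{eq:model-frozen-bc} at $l_0$.

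Each term is then bounded using Theorem~\ref{thm:main-frozen}:
\begin{itemize}
\item the inflow term gives $\norm{\chi}_{L^\infty}\norm{p}_{L^\infty}$;
\item the outflow term gives $C_M\norm{\chi}_{L^\infty}K_\infty$, using the $L^\infty$ bound on $x_E$, which we expect to persist as a trace bound at $l=l_m$;
\item the bulk integral gives $(C_M\norm{\chi'}_{L^\infty}+\Theta_M\norm{\chi}_{L^\infty})M_x(T)$, using $|g|\le C_M$, $0\le\mu+u\le\Theta_M$ from \eqref{eq:model-basic-bounds}, and the $L^1$ bound.
\end{itemize}
Summing the three bounds gives exactly $\Lambda^*$. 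Nonnegativity of $x_E$ and of $\chi$, together with the $L^1$ bound, gives $0\le\Kcal(E)\le M_T$, so $\Kcal(E)\in\Bt$. Hence $\Kcal(E)\in\Btl$, provided we show $\Kcal(E)\in W^{1,\infty}$ with that Lipschitz constant.

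To make the formal computation rigorous for $E$ merely in $\Bt$, with no time regularity, I would avoid differentiating and work directly with increments. Fix $0\le t_1<t_2\le T$. In \eqref{eq:model-weak-form} take the test function $\varphi(t,l)=\chi(l)\theta_\epsilon(t)$, where $\theta_\epsilon$ is a Lipschitz cutoff equal to $1$ on $[t_1,t_2]$ and ramping linearly to $0$ over intervals of length $\epsilon$. Two technical adjustments are needed:
\begin{itemize}
\item Since $\chi\in W^{1,\infty}$ is not $C^1$ and need not vanish at $l_m$, approximate $\chi$ by $C^1$ functions. Also cut off near $l_m$ with a ramp $\zeta_\delta(l)$ so that $\varphi(t,l_m)=0$.
\item Let $\epsilon\to0$. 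At Lebesgue points of $t\mapsto\langle\chi,x_E(t)\rangle$, the ramps produce $\langle\chi,x_E(t_2)\rangle-\langle\chi,x_E(t_1)\rangle$.
\end{itemize}
The remaining terms are then bounded by $|t_2-t_1|$ times the three quantities above.

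The main obstacle is the cutoff $\zeta_\delta$ near $l_m$. It produces the term
\[
\frac1\delta\int_{t_1}^{t_2}\int_{l_m-\delta}^{l_m} g\,\chi\,x_E\,dl\,dt ,
\]
which stands in for the outflow flux. Its sign is favorable, but we need an upper bound. The $L^\infty$ bound $\norm{x_E}_{L^\infty}\le K_\infty$ from Theorem~\ref{thm:main-frozen} controls the averaged trace uniformly in $\delta$, giving at most $C_M\norm{\chi}_{L^\infty}K_\infty|t_2-t_1|$. Without that bound one would need a trace theorem, so this is the step to get right.

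Finally, the estimate holds at a.e. pair $(t_1,t_2)$. I would choose the representative of $\Kcal(E)$ given by the weak-$*$ continuous version of $x_E$ (transport solutions are $C([0,T];L^1)$), which extends the Lipschitz bound to all $t_1,t_2$ and gives $\Kcal(E)\in W^{1,\infty}(0,T)$.
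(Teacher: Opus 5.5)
Your proposal is correct and follows the paper's proof. You test the frozen equation against $\chi$ to get the inflow/outflow/bulk identity for $\frac{d}{dt}\langle\chi,x_E(t)\rangle$, bound the three terms by $\norm{\chi}_{L^\infty(\Om)}\norm{p}_{L^\infty(0,T)}$, $C_M\norm{\chi}_{L^\infty(\Om)}K_\infty$ and $(C_M\norm{\chi'}_{L^\infty(\Om)}+\Theta_M\norm{\chi}_{L^\infty(\Om)})M_x(T)$ via Theorem~\ref{thm:main-frozen}, and obtain $0\le\Kcal(E)\le M_T$ from positivity and the mass bound. The only difference is that the paper asserts the identity for a.e.\ $t$ and tacitly uses the trace bound $x_E(t,l_m)\le K_\infty$, while your increment argument with the cutoffs $\theta_\epsilon$ and $\zeta_\delta$ justifies exactly that step through the averaged outflow term.
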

\begin{proposition}[Volterra comparison estimate]
\label{prop:main-volterra}
Let \(E_1,E_2\in\Btl\), and set
$\displaystyle x_i:=\Tcal_{E_i}(x_0,p,u)$ for $i=1,2$.
Then
\[
\norm{x_1(t,\cdot)-x_2(t,\cdot)}_{L^1(\Om)}
\le
C_T\int_0^t\abs{E_1(s)-E_2(s)}\,ds ,
\qquad
0\le t\le T,
\]
where one may take
\[
C_T
:=
L_M
\left[
\frac{\norm{p}_{L^\infty(0,T)}}{g_M}
+
2M_x(T)
+
C_{BV}
\right],
\qquad
M_x(T):=\norm{x_0}_{L^1(\Om)}+\int_0^T p(s)\,ds .
\]
Consequently,
\[
\abs{\Kcal(E_1)(t)-\Kcal(E_2)(t)}
\le
\widehat C_T
\int_0^t\abs{E_1(s)-E_2(s)}\,ds,
\qquad
\widehat C_T:=\norm{\chi}_{L^\infty(\Om)}C_T .
\]
\end{proposition}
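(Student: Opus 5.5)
The plan is a duality (stability) estimate for the difference $w:=x_1-x_2$. We treat $w$ as the solution of the frozen transport problem with coefficients $g_1:=g(E_1(t),\cdot)$ and $\mu_1+u$, with zero initial data and a source. Subtracting the weak formulations \eqref{eq:model-weak-form} for $x_1$ and $x_2$ gives, formally,
\[
\partial_t w+\partial_l(g_1 w)+(\mu_1+u)w
=-\partial_l\big((g_1-g_2)x_2\big)-(\mu_1-\mu_2)x_2 ,
\]
with inflow condition
\[
g_1(t,l_0)w(t,l_0)=-\big(g_1(t,l_0)-g_2(t,l_0)\big)x_2(t,l_0)=-\frac{(g_1-g_2)(t,l_0)}{g_2(t,l_0)}\,p(t).
\]
The first step is therefore to record the inflow defect. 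By \eqref{eq:model-ass-Elip} and $g_2\ge g_M$, the boundary flux of $w$ is bounded by $L_M\frac{\norm{p}_{L^\infty}}{g_M}\abs{E_1-E_2}$. This produces the first term in $C_T$.

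Next we expand the source. Since $E_2\in\Btl$, Theorem~\ref{thm:main-frozen} gives $x_2(t,\cdot)\in BV$ with $\abs{x_2(t)}_{BV}\le C_{BV}$. We can therefore write $\partial_l((g_1-g_2)x_2)=\partial_l(g_1-g_2)\,x_2+(g_1-g_2)\,Dx_2$ as a finite measure. Its total variation is bounded by
\[
L_M\abs{E_1-E_2}\big(\norm{x_2}_{L^1}+\abs{x_2}_{BV}\big)\le L_M\abs{E_1-E_2}\big(M_x(T)+C_{BV}\big),
\]
using the $L^1$ bound of Theorem~\ref{thm:main-frozen}. The mortality source contributes at most $L_M\abs{E_1-E_2}M_x(T)$. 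Summing these gives exactly $L_M[\norm{p}_\infty/g_M+2M_x(T)+C_{BV}]\abs{E_1(t)-E_2(t)}=C_T\abs{E_1-E_2}$.

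The third step uses the $L^1$-contraction (Duhamel) property of the linear frozen flow with coefficients $g_1$, $\mu_1+u\ge0$. Along characteristics, or via a sign-test/Kružkov-type argument against $\operatorname{sgn}(w)$, this flow is nonexpansive in $L^1$: outflow at $l_m$ and absorption only decrease mass, and inflow flux and measure sources add at most their total variation. This yields
\[
\norm{w(t)}_{L^1}\le\int_0^t\big(\text{boundary flux}+\text{source TV}\big)\,ds\le C_T\int_0^t\abs{E_1-E_2}\,ds.
\]
The output estimate then follows from $\abs{\Kcal(E_1)(t)-\Kcal(E_2)(t)}\le\norm{\chi}_{L^\infty}\norm{w(t)}_{L^1}$.

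The main obstacle is making the third step rigorous at the weak-solution level, because the source contains the measure $Dx_2$ and $w$ is only $L^1$. We would first prove the estimate for smooth approximations $x_0^\varepsilon,p^\varepsilon$ with $BV$ bounds uniform in $\varepsilon$ (and, if needed, smoothed $E_i$). For these the characteristic formula gives classical solutions, and the computation $\frac{d}{dt}\int\abs{w}\le\dots$ is justified with a regularized sign. We would then pass to the limit using uniqueness and $L^1$ stability from Theorem~\ref{thm:main-frozen}, together with lower semicontinuity of the BV seminorm. The Lipschitz-in-time bound on $E_i$ from the class $\Btl$ is needed only to secure the uniform $C_{BV}$ bound on $x_2$.
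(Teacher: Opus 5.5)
Your proposal is correct and is essentially the paper's own proof. It uses the same equation for $w=x_1-x_2$ with inflow defect $-(g_1-g_2)x_2(t,l_0)$ and the same BV Leibniz bound $L_M\abs{E_1-E_2}(M_x(T)+C_{BV})$ for the convective measure source, plus $L_M\abs{E_1-E_2}M_x(T)$ for the reaction term. It also uses the same Kru\v{z}kov sign/$L^1$-contraction step, which the paper simply cites rather than justifying.

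Your regularization sketch goes beyond what the paper does, but two points in it need fixing. First, smoothing $x_0$ and $p$ does not give classical solutions: the corner compatibility $g(E_i(0),l_0)x_0^\varepsilon(l_0)=p^\varepsilon(0)$ cannot hold for both $i$ in general, and $u$ is only bounded in $t$. Second, the limit passage needs a uniform upper bound on $\abs{x_2^\varepsilon(t)}_{BV}$ from the BV lemma; lower semicontinuity points the wrong way. It is cleaner to run the contraction directly through the measure-valued Duhamel formula along the $g_1$-characteristics.
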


\begin{theorem}[Closed-loop well-posedness]
\label{thm:main-wellposedness}
Assume Assumptions~\ref{ass:model-coefficients}--\ref{ass:model-data}. Let
$\displaystyle 
\beta>\widehat C_T$.
Then
$\displaystyle 
\Kcal:\Btl\to\Btl$
is a contraction in the Bielecki norm \eqref{eq:model-Bielecki}:
\[
\norm{\Kcal(E_1)-\Kcal(E_2)}_\beta
\le
\frac{\widehat C_T}{\beta}
\norm{E_1-E_2}_\beta.
\]
Hence there exists a unique \(E^{\rm cl}\in\mathcal B_T^{\rm Lip}\) such that
$E^{\rm cl}=\mathcal K(E^{\rm cl})$.
With
$x^{\rm cl}:=\mathcal T_{E^{\rm cl}}(x_0,p,u)$,
the pair \((x^{\rm cl},E^{\rm cl})\) is the unique nonnegative closed-loop weak
solution on \([0,T]\). Moreover,
\[
0\le E^{\rm cl}(t)\le M_T
\qquad\text{for a.e. }t\in(0,T).
\]
\end{theorem}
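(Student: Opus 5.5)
The proof is a Banach fixed-point argument on $\Btl$ with the Bielecki norm. Three ingredients are needed: self-mapping, completeness, and contraction. We then identify fixed points of $\Kcal$ with closed-loop weak solutions, and use this to upgrade uniqueness from $\Btl$ to the whole class of solutions.

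\emph{Self-mapping.} Let $E\in\Btl\subset\Bt$. Lemma~\ref{lem:main-output-regularity} gives $\Lip(\Kcal(E))\le\Lambda^*$. Theorem~\ref{thm:main-frozen} gives $x_E\ge0$ and $\|x_E(t)\|_{L^1}\le M_x(T)$. Hence
\[
0\le \Kcal(E)(t)\le\|\chi\|_{L^\infty}M_x(T)=M_T,
\]
so $\Kcal(E)\in\Btl$.

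\emph{Completeness.} The Bielecki norm is equivalent to the $L^\infty$ norm, since $e^{-\beta T}\|E\|_\infty\le\|E\|_\beta\le\|E\|_\infty$. The set $\Btl$ is closed in $L^\infty$: the pointwise bounds pass to the limit, and the uniform Lipschitz bound survives uniform limits of continuous representatives. So $\Btl$ is a complete metric space.

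\emph{Contraction.} By Proposition~\ref{prop:main-volterra},
\[
e^{-\beta t}|\Kcal(E_1)(t)-\Kcal(E_2)(t)|\le \widehat C_T e^{-\beta t}\int_0^t e^{\beta s}\,e^{-\beta s}|E_1-E_2|(s)\,ds\le \widehat C_T\|E_1-E_2\|_\beta\, e^{-\beta t}\int_0^t e^{\beta s}ds .
\]
The last integral satisfies $e^{-\beta t}\int_0^te^{\beta s}ds\le 1/\beta$. This gives the stated bound with factor $\widehat C_T/\beta<1$, so Banach's theorem yields a unique fixed point $E^{\rm cl}\in\Btl$.

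\emph{Fixed points are solutions.} Set $x^{\rm cl}=\Tcal_{E^{\rm cl}}(x_0,p,u)$. It is the nonnegative weak solution of the frozen problem with coefficients $g(E^{\rm cl}(t),l)$ and $\mu(E^{\rm cl}(t),l)$, so it satisfies \eqref{eq:model-weak-form}. Moreover $\langle\chi,x^{\rm cl}(t)\rangle=\Kcal(E^{\rm cl})(t)=E^{\rm cl}(t)$. Hence the pair is a closed-loop weak solution, and $0\le E^{\rm cl}\le M_T$ follows from the self-mapping step.

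\emph{Uniqueness among all nonnegative weak solutions.} This is the main obstacle, because an arbitrary solution $(x,E)$ is only known to satisfy $E\in L^\infty_+$. The plan is as follows.

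First show $E\in\Bt$. Because $x\ge0$ solves the frozen linear problem with path $E$, testing the weak form with $\varphi$ approximating $\mathbf 1_{[0,t]}$ in time and constant in $l$ (cut off near $l_m$) yields the mass inequality $\|x(t)\|_{L^1}\le M_x(T)$. This gives $E\le M_T$. The coefficient bounds are only assumed on $[0,M_T]$, but the test-function argument uses only nonnegativity of $\mu+u$ and $g>0$, which hold on $[0,\infty)$ by Assumption~\ref{ass:model-coefficients}.

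Next, with $E\in\Bt$, uniqueness for the frozen problem in Theorem~\ref{thm:main-frozen} gives $x=\Tcal_E(x_0,p,u)$, so $E=\Kcal(E)$ a.e. Lemma~\ref{lem:main-output-regularity} then shows $E=\Kcal(E)\in\Btl$ after modification on a null set. Uniqueness of the fixed point in $\Btl$ gives $E=E^{\rm cl}$, and hence $x=x^{\rm cl}$.

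The delicate point is justifying the mass inequality directly from the weak formulation. If it proves awkward, an alternative is to extend $g$ and $\mu$ outside $[0,M_T]$ by truncation $E\mapsto\min(E,M_T)$, run the same argument, and conclude a posteriori that the truncation is inactive.
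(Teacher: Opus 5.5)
Your proof is correct and follows the paper's argument: self-mapping via Lemma~\ref{lem:main-output-regularity}, closedness of $\Btl$ under uniform limits, the Bielecki estimate from Proposition~\ref{prop:main-volterra}, Banach's theorem, and uniqueness by identifying any weak solution with a fixed point of $\Kcal$. Your extra step fills a point the paper's proof skips: it applies $\Tcal_E$ to an arbitrary solution path without first checking that $E\le M_T$. You supply this by deriving $\|x(t)\|_{L^1}\le M_x(T)$ directly from the weak form, using only $g>0$ and $\mu+u\ge0$, which places $E$ in $\Bt$ before frozen uniqueness is invoked.
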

\begin{proposition}[Continuation criterion]
\label{prop:main-continuation}
Let \((x,E)\) be a closed-loop solution on a maximal interval
$[0,\tau_{\max})$,
$0<\tau_{\max}\le\infty$.
If, for every \(T<\tau_{\max}\),
$\displaystyle 
x\in L^\infty(0,T;BV_+(\Om))$,
and
\begin{equation}
\label{eq:main-continuation-bound}
\sup_{T<\tau_{\max}}
\left[
\norm{x}_{L^\infty(0,T;BV)}
+
\norm{E}_{L^\infty(0,T)}
+
\norm{p}_{W^{1,\infty}(0,T)}
+
\norm{u}_{L^\infty(0,T;BV)}
\right]<\infty,
\end{equation}
with
\begin{equation}
\label{eq:main-continuation-floor}
\inf_{\substack{0\le E\le \sup_{t<\tau_{\max}}E(t)\\ l\in\Om}}
g(E,l)>0,
\end{equation}
then \(\tau_{\max}=\infty\). Hence finite-time breakdown implies failure of at
least one bound in \eqref{eq:main-continuation-bound}--\eqref{eq:main-continuation-floor}.
\end{proposition}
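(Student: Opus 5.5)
We argue by contradiction: assume $\tau_{\max}<\infty$ while \eqref{eq:main-continuation-bound}--\eqref{eq:main-continuation-floor} hold, and extend the solution past $\tau_{\max}$. The key point is that the local theory of Theorem~\ref{thm:main-wellposedness} gives an existence interval whose length depends only on quantities the hypotheses control uniformly. These are a lower bound on $g$ over the relevant feedback box, bounds on $\|x_0\|_{L^1\cap L^\infty\cap BV}$, $\|p\|_{W^{1,\infty}}$, $u$, and the constants $C_M,L_M,C_2$ on that box. Let $\bar E:=\sup_{t<\tau_{\max}}E(t)<\infty$ and $\bar B:=\sup_{T<\tau_{\max}}\|x\|_{L^\infty(0,T;BV)}$. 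We choose $M:=\bar E+\|\chi\|_{L^\infty}(\bar B+\tau_{\max}\|p\|_{L^\infty})+1$, which dominates every feedback level reachable on a restarted interval of length at most one, by the mass bound \eqref{eq:model-MT}. By Assumption~\ref{ass:model-coefficients} and \eqref{eq:main-continuation-floor}, we then get uniform constants $g_M>0$, $C_M$, $L_M$, $C_2$ on $[0,M]\times\Om$.

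\textbf{Step 1 (uniform restart data).} For $t_0<\tau_{\max}$, take $x(t_0,\cdot)$ as new initial datum. Its $BV$ norm is bounded by $\bar B$ uniformly in $t_0$, and in one dimension this also controls its $L^\infty$ and $L^1$ norms. The shifted inflow $p(t_0+\cdot)$ has $W^{1,\infty}$ norm bounded uniformly, and the shifted control stays in $\Uad$. Hence the constants $M_{T}$, $K_\infty$, $\Lambda^*$, $C_{BV}$, $C_T$, $\widehat C_T$ from Theorem~\ref{thm:main-frozen}, Lemma~\ref{lem:main-output-regularity} and Proposition~\ref{prop:main-volterra}, computed on a window of fixed length $\delta\in(0,1]$, are bounded independently of $t_0$.

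\textbf{Step 2 (uniform local existence).} Theorem~\ref{thm:main-wellposedness} contracts in the Bielecki norm for any finite horizon. So it yields a closed-loop solution on $[t_0,t_0+\delta]$ for a fixed $\delta>0$ independent of $t_0$, with the contraction constant $\widehat C_\delta/\beta<1$ after taking $\beta$ large. Choose $t_0>\tau_{\max}-\delta/2$. Uniqueness in Theorem~\ref{thm:main-wellposedness}, applied on the overlap $[t_0,\tau_{\max})$, shows the new solution agrees with $(x,E)$ there. To glue in the weak formulation \eqref{eq:model-weak-form}, we split test functions with a partition of unity in time. Here we use that $x\in C([0,T];L^1)$, which follows from the frozen transport representation, so that the trace $x(t_0,\cdot)$ is meaningful. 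The concatenation is then a closed-loop weak solution on $[0,t_0+\delta)\supsetneq[0,\tau_{\max})$, contradicting maximality.

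\textbf{Main obstacle.} The main difficulty is Step 1: showing that the feedback box used on the restart window is controlled a priori, so that the constants do not degenerate. Theorem~\ref{thm:main-wellposedness} uses $M_T$ built from the restart mass. Because the $L^1$ bound is monotone in elapsed time, the box stays inside $[0,M]$ as long as $\delta\le1$. The floor \eqref{eq:main-continuation-floor} is stated only up to $\bar E$, while $M>\bar E$. We handle this gap by continuity of $g$ in $E$ (local Lipschitz), which gives $g\ge g_M/2$ on a slightly enlarged box. Then we shrink $\delta$ so the solution cannot leave that box, using the Lipschitz bound $\Lip(E)\le\Lambda^*$ from Lemma~\ref{lem:main-output-regularity}.
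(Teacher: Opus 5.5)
Your proposal is correct and follows the same route as the paper: the uniform bounds make every constant of Theorem~\ref{thm:main-wellposedness} on a restart window of length $\delta$ independent of $t_0$, so restarting from $x(t_0,\cdot)$ with $t_0>\tau_{\max}-\delta/2$ extends the solution and contradicts maximality. Your extra steps fill in details that the paper's terse proof leaves implicit: handling a restart feedback box that may exceed $\bar E$ (via Lipschitz continuity of $g$ in $E$ and a smaller $\delta$), using the $C([0,T];L^1)$ continuity that makes the restart datum well defined, and gluing the weak formulations.
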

\subsection{Stationary closure and resonance}
\label{subsec:main-stationary}
For stationary \(u=u(l)\) and \(p>0\),
$\displaystyle 
x_E(l)=
\frac{p}{g(E,l)}
\exp\!\left[
-\int_{l_0}^{l}
\frac{\mu(E,\xi)+u(\xi)}{g(E,\xi)}\,d\xi
\right]$, and
$\Phi(E)=\langle\chi,x_E\rangle$.
\begin{lemma}[Derived stationary output interval]
\label{lem:main-stationary-box}
Assume
$g(E,l)\ge g_\flat>0$ and
$(E,l)\in[0,\bar M]\times\Om$,
where
$\displaystyle 
\bar M:=
\norm{\chi}_{L^\infty(\Om)}
(l_m-l_0)\frac{p}{g_\flat}$.
Then
$\displaystyle 
\Phi([0,\bar M])\subset[0,\bar M]$.
\end{lemma}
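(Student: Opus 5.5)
The plan is to bound the explicit stationary profile \eqref{eq:model-stationary-profile} pointwise and then integrate against $\chi$. Fix $E\in[0,\bar M]$. By hypothesis $g(E,l)\ge g_\flat>0$ on this box. Since $\mu\ge0$ and $u\ge0$, the exponent
\[
-\int_{l_0}^{l}\frac{\mu(E,\xi)+u(\xi)}{g(E,\xi)}\,d\xi
\]
is nonpositive. Hence the exponential factor lies in $(0,1]$.

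Positivity of $g$, together with $p>0$, gives
\[
0< x_E(l)\le \frac{p}{g(E,l)}\le \frac{p}{g_\flat}\qquad\text{for a.e. } l\in\Om .
\]
The map $\Phi$ is therefore well defined on $[0,\bar M]$.

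Next I integrate this bound. Because $\chi\in L^\infty_+(\Om)$ by Assumption~\ref{ass:model-data}, the integrand $\chi x_E$ is nonnegative, so $\Phi(E)\ge0$. On the other side,
\[
\Phi(E)=\int_{l_0}^{l_m}\chi(l)x_E(l)\,dl\le \norm{\chi}_{L^\infty(\Om)}\int_{l_0}^{l_m}\frac{p}{g_\flat}\,dl=\norm{\chi}_{L^\infty(\Om)}(l_m-l_0)\frac{p}{g_\flat}=\bar M .
\]
This holds for every $E\in[0,\bar M]$, which is exactly the inclusion $\Phi([0,\bar M])\subset[0,\bar M]$.

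There is no real obstacle here. The one point to watch is that the floor $g\ge g_\flat$ is only assumed on $[0,\bar M]\times\Om$. This is consistent, because we only evaluate $\Phi$ on $[0,\bar M]$, and the bound $\bar M$ does not depend on $E$. So no circularity arises, and no extension of the floor beyond the box is needed. Borel measurability of $g$ and $\mu$ in $l$ guarantees that $x_E$ is measurable, so the integrals make sense.
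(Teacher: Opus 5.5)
Your proof is correct and follows essentially the same route as the paper: you bound the stationary profile pointwise by $p/g_\flat$ because the survival exponential lies in $(0,1]$, and then integrate against $\chi\in L^\infty_+(\Om)$ to get $0\le\Phi(E)\le\bar M$. The only cosmetic difference is that the paper re-derives the exponential through the flux $Y_E=g(E,\cdot)x_E$ solving $Y_E'=-q_EY_E$ with $Y_E(l_0)=p$, whereas you read the same bound directly off the explicit profile formula.
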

\begin{proposition}[Closure derivative decomposition]
\label{prop:main-decomp}
Assume \(E\mapsto g(E,\cdot)\) and \(E\mapsto\mu(E,\cdot)\) are \(C^1\) into
\(L^\infty(\Om)\). Let
\[
\alpha(E,l):=-\frac{\partial_E g(E,l)}{g(E,l)},
\qquad
b(E,l):=\partial_E\left(\frac{\mu(E,l)+u(l)}{g(E,l)}\right),
\]
and
$\displaystyle 
W_E(\xi):=\int_\xi^{l_m}\chi(l)x_E(l)\,dl$.
Then
$\displaystyle 
\Phi'(E)=\Rcal(E)-\Ccal(E)$,
where
\[
\Rcal(E):=
\int_{l_0}^{l_m}\chi(l)x_E(l)\alpha(E,l)\,dl,
\qquad
\Ccal(E):=
\int_{l_0}^{l_m}b(E,\xi)W_E(\xi)\,d\xi .
\]
\end{proposition}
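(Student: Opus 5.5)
Differentiating the explicit stationary profile \eqref{eq:model-stationary-profile} in $E$ under the integral sign should give the decomposition directly. Write $x_E(l)=p\,g(E,l)^{-1}\exp[-Q_E(l)]$ with
\[
Q_E(l):=\int_{l_0}^{l}\frac{\mu(E,\xi)+u(\xi)}{g(E,\xi)}\,d\xi .
\]
Since $E\mapsto g(E,\cdot)$ and $E\mapsto\mu(E,\cdot)$ are $C^1$ into $L^\infty(\Om)$ and $g\ge g_M>0$ on the relevant box, the maps $E\mapsto 1/g(E,\cdot)$ and $E\mapsto(\mu+u)/g$ are $C^1$ into $L^\infty(\Om)$. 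Hence $E\mapsto Q_E$ is $C^1$ into $C(\Om)$, with $\partial_E Q_E(l)=\int_{l_0}^{l}b(E,\xi)\,d\xi$. By the chain rule in the Banach algebra $L^\infty(\Om)$, $E\mapsto x_E$ is $C^1$ into $L^\infty(\Om)$, and
\[
\partial_E x_E(l)=x_E(l)\Big[\alpha(E,l)-\int_{l_0}^{l}b(E,\xi)\,d\xi\Big],
\]
where the first term comes from $\partial_E(1/g)=-\partial_E g/g^2=\alpha/g$. Since $\chi\in L^\infty$ and $\Om$ is bounded, pairing with $\chi$ is a bounded linear functional on $L^\infty(\Om)$. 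So $\Phi$ is $C^1$ and $\Phi'(E)=\langle\chi,\partial_E x_E\rangle$.

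Next we split the resulting integral. The $\alpha$-term gives exactly $\Rcal(E)$. The remaining term is
\[
\int_{l_0}^{l_m}\chi(l)x_E(l)\int_{l_0}^{l}b(E,\xi)\,d\xi\,dl .
\]
We exchange the order of integration over the triangle $\{l_0\le\xi\le l\le l_m\}$, which Fubini allows since the integrand is bounded on a bounded set. This gives
\[
\int_{l_0}^{l_m}b(E,\xi)\int_\xi^{l_m}\chi(l)x_E(l)\,dl\,d\xi=\int_{l_0}^{l_m} b(E,\xi)W_E(\xi)\,d\xi=\Ccal(E),
\]
and therefore $\Phi'(E)=\Rcal(E)-\Ccal(E)$.

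The main technical point is justifying differentiation of the composite in the $L^\infty$ setting. Pointwise differentiability a.e. in $l$ is not automatic from $C^1$ into $L^\infty$ without care. We avoid pointwise arguments by working entirely with Fréchet derivatives of Nemytskii-type maps in $L^\infty$: products, the reciprocal $1/g$ (which is smooth on $\{g\ge g_M\}$), the exponential, and the bounded integration operator $f\mapsto\int_{l_0}^{\cdot}f$ from $L^\infty$ to $C(\Om)$. Each of these is $C^1$, and the chain rule then yields the displayed formula for $\partial_E x_E$.
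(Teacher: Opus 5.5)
Your proposal is correct and follows the paper's proof. The paper gets the same sensitivity formula $\partial_E x_E(l)=x_E(l)\alpha(E,l)-x_E(l)\int_{l_0}^{l}b(E,\xi)\,d\xi$ by differentiating $\log x_E$ pointwise, then pairs with $\chi$ and swaps the order of integration over the triangle exactly as you do; your Fréchet-derivative argument in $L^\infty(\Omega)$ makes rigorous the differentiation under the integral sign that the paper takes for granted.
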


\begin{theorem}[Stationary closure and sharp uniqueness]
\label{thm:main-stationary}
Assume Lemma~\ref{lem:main-stationary-box} and the continuity hypotheses of
Proposition~\ref{prop:main-decomp}. Then the stationary closure equation
$E=\Phi(E)$
has at least one solution \(E^*\in[0,\bar M]\). If, in addition,
\(\Phi\in C^1([0,\bar M])\) and
\[
\Phi'(E)<1
\qquad \text{for every }E\in[0,\bar M],
\]
then this solution is unique.
\end{theorem}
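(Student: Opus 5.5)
The argument is a one-dimensional intermediate value argument on the scalar function $H(E)=E-\Phi(E)$, followed by a monotonicity argument for uniqueness.

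\textbf{Existence.} First I establish that $\Phi$ is continuous on $[0,\bar M]$.

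\begin{itemize}
\item By the continuity hypotheses of Proposition~\ref{prop:main-decomp}, the maps $E\mapsto g(E,\cdot)$ and $E\mapsto \mu(E,\cdot)$ are continuous into $L^\infty(\Om)$.
\item Together with the floor $g\ge g_\flat>0$, this makes $E\mapsto (\mu+u)/g$ and $E\mapsto 1/g$ continuous into $L^\infty(\Om)$.
\item Hence the exponent $\int_{l_0}^{l}(\mu+u)/g\,d\xi$ depends continuously on $E$, uniformly in $l$.
\item By \eqref{eq:model-stationary-profile}, $E\mapsto x_E$ is continuous into $L^\infty(\Om)$, and therefore $\Phi(E)=\langle\chi,x_E\rangle$ is continuous.
\end{itemize}

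Lemma~\ref{lem:main-stationary-box} gives $\Phi([0,\bar M])\subset[0,\bar M]$. Consequently $H(0)=-\Phi(0)\le 0$ and $H(\bar M)=\bar M-\Phi(\bar M)\ge 0$. The intermediate value theorem (equivalently, Brouwer in dimension one) then yields some $E^*\in[0,\bar M]$ with $H(E^*)=0$, that is, $E^*=\Phi(E^*)$.

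\textbf{Uniqueness.} Assume now $\Phi\in C^1([0,\bar M])$ and $\Phi'<1$ on the whole interval. Then $H'(E)=1-\Phi'(E)>0$ everywhere, so $H$ is strictly increasing and has at most one zero.

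Alternatively, suppose $E_1<E_2$ were two fixed points in $[0,\bar M]$. The mean value theorem gives $\xi\in(E_1,E_2)$ with
\[
\Phi'(\xi)=\frac{\Phi(E_2)-\Phi(E_1)}{E_2-E_1}=\frac{E_2-E_1}{E_2-E_1}=1,
\]
which contradicts $\Phi'(\xi)<1$. The interval being closed poses no difficulty, since $\xi$ lies in the open interior.

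\textbf{Where care is needed.} The only step requiring attention is the continuity of $\Phi$ in the existence part. It rests on having the continuity of the coefficients in $E$ together with the uniform lower bound $g\ge g_\flat$ on $[0,\bar M]\times\Om$, both of which hold here. It also helps that the integrals run over the bounded interval $\Om$, so uniform convergence of $x_E$ passes directly through $\langle\chi,\cdot\rangle$.
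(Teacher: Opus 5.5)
Your proposal is correct and follows essentially the same route as the paper. You get continuity of $\Phi$ from the $L^\infty$-continuity of $g,\mu$ in $E$ together with the floor $g\ge g_\flat$, the self-map property from Lemma~\ref{lem:main-stationary-box}, a one-dimensional fixed-point argument (your intermediate value theorem on $H$ is exactly the one-dimensional Brouwer step the paper invokes), and strict monotonicity of $H=E-\Phi$ for uniqueness; the paper merely proves continuity of $E\mapsto x_E$ in $L^1$ by explicit estimates rather than in $L^\infty$, which makes no difference here.
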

\begin{corollary}[Integral dominance]
\label{cor:main-dominance}
If for every $E\in[0,\bar M]$,
$\Ccal(E)>\Rcal(E)$,
then $\Phi'(E)<0<1$ for all $E\in[0,\bar M]$,
and the stationary feedback level is unique.
\end{corollary}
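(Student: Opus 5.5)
The corollary follows from Proposition~\ref{prop:main-decomp} together with the uniqueness part of Theorem~\ref{thm:main-stationary}.

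\textbf{Step 1: the sign of $\Phi'$.} Under the continuity hypotheses of Proposition~\ref{prop:main-decomp}, for every $E\in[0,\bar M]$ we have the decomposition $\Phi'(E)=\Rcal(E)-\Ccal(E)$. The dominance hypothesis $\Ccal(E)>\Rcal(E)$ therefore gives $\Phi'(E)<0$ directly. Trivially $0<1$, so $\Phi'(E)<1$ on all of $[0,\bar M]$.

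\textbf{Step 2: regularity needed for uniqueness.} Theorem~\ref{thm:main-stationary} requires $\Phi\in C^1([0,\bar M])$. I would check this from the explicit formulas for $\Rcal$ and $\Ccal$. Under the $C^1$-into-$L^\infty$ hypothesis on $g$ and $\mu$, together with the floor $g\ge g_\flat>0$, the maps $E\mapsto x_E$, $E\mapsto\alpha(E,\cdot)$, $E\mapsto b(E,\cdot)$ and $E\mapsto W_E$ are continuous into $L^\infty(\Om)$. Then $\Rcal$ and $\Ccal$ are continuous in $E$ by dominated convergence on the bounded interval $\Om$, and hence so is $\Phi'$. If the paper intends the hypotheses of Proposition~\ref{prop:main-decomp} to already include $\Phi\in C^1$, this step is immediate.

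\textbf{Step 3: conclusion.} Existence of $E^*\in[0,\bar M]$ comes from Theorem~\ref{thm:main-stationary}, via Lemma~\ref{lem:main-stationary-box} and the intermediate value theorem. Uniqueness then follows from Step 1 and Theorem~\ref{thm:main-stationary}.

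A direct argument is also available. The function $H(E)=E-\Phi(E)$ has $H'=1-\Phi'>1>0$, so $H$ is strictly increasing and vanishes at most once. Since $\Phi'<0$, $\Phi$ is strictly decreasing while the identity is increasing, which gives the same conclusion without needing the full $C^1$ statement, only differentiability.

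The only mild obstacle is the $C^1$ regularity in Step 2. The direct monotonicity argument avoids it, because strict decrease of $\Phi$ follows from $\Phi'<0$ pointwise via the mean value theorem.
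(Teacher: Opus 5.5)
Your proposal is correct and follows the paper's proof: the decomposition of Proposition~\ref{prop:main-decomp} gives $\Phi'=\Rcal-\Ccal<0<1$ on $[0,\bar M]$, and uniqueness then comes from Theorem~\ref{thm:main-stationary}. Your Step 2 check that $\Phi\in C^1([0,\bar M])$ is valid and makes explicit a hypothesis the paper leaves unverified, and your mean-value-theorem argument (strict decrease of $\Phi$, which needs only differentiability) shows that this check is not actually required.
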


\begin{theorem}[Local fold at closure resonance]
\label{thm:main-fold}
Let
\[
H(E,\eta):=E-\Phi(E;\eta),
\]
and suppose \(H\in C^2\) in a neighborhood of \((E_0,\eta_0)\). Assume
\[
H(E_0,\eta_0)=0,
\qquad
H_E(E_0,\eta_0)=0,
\qquad
H_{EE}(E_0,\eta_0)\ne0,
\qquad
H_\eta(E_0,\eta_0)\ne0 .
\]
Equivalently,
\[
E_0=\Phi(E_0;\eta_0),
\qquad
\Phi_E(E_0;\eta_0)=1,
\qquad
\Phi_{EE}(E_0;\eta_0)\ne0,
\qquad
\Phi_\eta(E_0;\eta_0)\ne0 .
\]
Then the zero set \(H(E,\eta)=0\) is locally a nondegenerate fold. More
precisely, there exists a \(C^2\) function \(\eta=\eta(E)\), defined near
\(E_0\), such that
\[
\eta(E_0)=\eta_0,
\qquad
\eta'(E_0)=0,
\qquad
\eta''(E_0)
=
-\frac{H_{EE}(E_0,\eta_0)}{H_\eta(E_0,\eta_0)} .
\]
Equivalently,
\[
\eta(E_0+s)
=
\eta_0
-
\frac{\Phi_{EE}(E_0;\eta_0)}
     {2\Phi_\eta(E_0;\eta_0)}
s^2
+
o(s^2).
\]
\end{theorem}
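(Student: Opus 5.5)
The plan is to apply the implicit function theorem to $H$ in the $\eta$-direction rather than the $E$-direction, because $H_E(E_0,\eta_0)=0$ rules out solving for $E$ while $H_\eta(E_0,\eta_0)\ne0$ permits solving for $\eta$. Since $H\in C^2$ near $(E_0,\eta_0)$, $H(E_0,\eta_0)=0$ and $H_\eta(E_0,\eta_0)\ne0$, the $C^k$ implicit function theorem with $k=2$ gives neighborhoods $U\ni E_0$, $V\ni\eta_0$ and a unique $C^2$ map $\eta:U\to V$ with $\eta(E_0)=\eta_0$. Moreover, the zero set of $H$ in $U\times V$ is exactly the graph $\{(E,\eta(E))\}$. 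This already shows the zero set is locally a $C^2$ curve parametrized by $E$.

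Next, differentiate the identity $H(E,\eta(E))\equiv0$ on $U$. One differentiation gives $H_E+H_\eta\,\eta'=0$. Evaluating at $E_0$ and using $H_E(E_0,\eta_0)=0$ with $H_\eta\ne0$ yields $\eta'(E_0)=0$. A second differentiation, which is legitimate because $H\in C^2$ and $\eta\in C^2$, gives
\[
H_{EE}+2H_{E\eta}\,\eta'+H_{\eta\eta}(\eta')^2+H_\eta\,\eta''=0 .
\]
At $E_0$ the terms containing $\eta'(E_0)=0$ drop out, leaving $\eta''(E_0)=-H_{EE}(E_0,\eta_0)/H_\eta(E_0,\eta_0)$. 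This value is nonzero because $H_{EE}(E_0,\eta_0)\ne0$. So $E_0$ is a nondegenerate critical point of $\eta$: the curve has a quadratic turning point, and for $\eta$ on one side of $\eta_0$ there are two nearby roots, while on the other side there are none. This is precisely the nondegenerate fold.

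Finally, translate everything into $\Phi$. Since $H=E-\Phi$, we have $H_E=1-\Phi_E$, $H_{EE}=-\Phi_{EE}$ and $H_\eta=-\Phi_\eta$. Hence the stated hypotheses are equivalent to the $\Phi$-formulation, and $\eta''(E_0)=-\Phi_{EE}/\Phi_\eta$. Taylor's theorem with Peano remainder for the $C^2$ function $\eta$ then gives $\eta(E_0+s)=\eta_0-\tfrac{\Phi_{EE}}{2\Phi_\eta}s^2+o(s^2)$.

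There is no genuine obstacle here. The only point needing care is that the second differentiation uses only continuity of the second derivatives of $H$, which the $C^2$ hypothesis guarantees, so no higher regularity (e.g. \eqref{eq:model-C2E}) is required at this abstract level.
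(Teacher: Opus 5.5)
Your proposal is correct and follows essentially the same route as the paper: the implicit function theorem in the $\eta$-variable (using $H_\eta(E_0,\eta_0)\ne0$), two differentiations of $H(E,\eta(E))\equiv0$ giving $\eta'(E_0)=0$ and $\eta''(E_0)=-H_{EE}(E_0,\eta_0)/H_\eta(E_0,\eta_0)$, the translation $H_E=1-\Phi_E$, $H_{EE}=-\Phi_{EE}$, $H_\eta=-\Phi_\eta$, and a Taylor expansion with Peano remainder. Your fold interpretation is also the paper's: the zero set is the graph of $\eta$, and $\eta''(E_0)\neq0$ gives two roots on one side of $\eta_0$ and none on the other; the paper additionally writes out the branches $E_\pm(\eta)$ with square-root asymptotics.
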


As illustrated in Figure~\ref{fig:fig3}, the geometric solution to the scalar closure equation $E = \Phi(E; \eta)$ provides an intuitive understanding of the closure-resonance threshold. This clearly demonstrates the breakdown of local closure invertibility and the onset of the fold bifurcation.

\begin{figure}[htbp]
    \centering
    \includegraphics[width=0.8\textwidth]{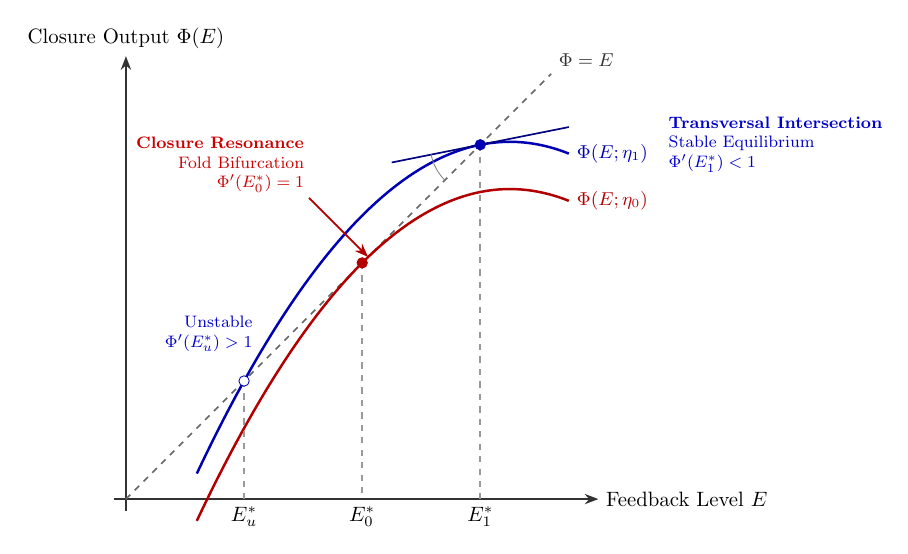}
    \caption{Stationary Closure and the Fold Bifurcation. A graphical solution to the scalar closure equation $E = \Phi(E; \eta)$. The plot features the identity line $\Phi = E$ alongside two curves: one intersecting transversally to indicate a stable equilibrium ($\Phi'(E_1^*) < 1$), and another tangent to the identity line at the closure resonance threshold ($\Phi'(E_0^*) = 1$), marking the fold bifurcation.}
    \label{fig:fig3}
\end{figure}

\subsection{Linearized renewal equation and stability}
\label{subsec:main-stability}
Let \((x^*,E^*)\) be a stationary closed-loop equilibrium. Set
$g^*=g(E^*,\cdot)$,
$\mu^*=\mu(E^*,\cdot)$,
and
$e(t)=\langle\chi,v(t,\cdot)\rangle$.
The linearized equation is
\begin{equation}
\label{eq:main-linearized}
\partial_t v+\partial_l(g^*v)+(\mu^*+u)v
=
\sigma_{\rm int}\,e(t),
\qquad
e(t)=\langle\chi,v(t,\cdot)\rangle ,
\end{equation}
with
\[
g^*(l_0)v(t,l_0)=-a_0e(t),
\qquad
a_0:=(\partial_E g)(E^*,l_0)x^*(l_0),
\]
and
\begin{equation}
\label{eq:main-sigma-int}
\sigma_{\rm int}
:=
-\frac{d}{dl}\big((\partial_E g)(E^*,\cdot)x^*\big)
-(\partial_E\mu)(E^*,\cdot)x^* .
\end{equation}
\begin{lemma}[Finite sweep-out]
\label{lem:main-sweep}
Let \(U_0(t)\) be the zero-inflow transport semigroup generated by
\[
\Lcal_{\rm tr}\varphi
=
-\partial_l(g^*\varphi)-(\mu^*+u)\varphi,
\qquad
D(\Lcal_{\rm tr})
=
\{\varphi\in W^{1,1}(\Om):g^*(l_0)\varphi(l_0)=0\}.
\]
Let
$\displaystyle 
t_\sharp:=\int_{l_0}^{l_m}\frac{d\xi}{g^*(\xi)}$.
Then for all $t\ge t_\sharp$, $U_0(t)=0$.
Moreover, for every \(\lambda\in\C\), the homogeneous problem
\[
\lambda\varphi
=
-\partial_l(g^*\varphi)-(\mu^*+u)\varphi,
\qquad
g^*(l_0)\varphi(l_0)=0,
\]
has only the trivial solution.
\end{lemma}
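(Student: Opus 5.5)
We will argue by the method of characteristics for the time-independent transport operator $\Lcal_{\rm tr}$. Since $g^*\ge g_M>0$ is Lipschitz on $\Om$, the characteristic ODE $\dot L=g^*(L)$ has a unique increasing solution. We introduce the travel-time coordinate
\[
\tau(l):=\int_{l_0}^{l}\frac{d\xi}{g^*(\xi)},
\]
which is a bi-Lipschitz increasing bijection of $[l_0,l_m]$ onto $[0,t_\sharp]$. For the backward characteristic through $(t,l)$ we write $L(s;t,l)$, so that $\tau(L(s;t,l))=\tau(l)-(t-s)$.

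\textbf{Step 1: explicit semigroup.} For $\varphi\in L^1(\Om)$ we will show
\[
(U_0(t)\varphi)(l)=
\begin{cases}
\varphi\big(\tau^{-1}(\tau(l)-t)\big)\,\dfrac{g^*(\tau^{-1}(\tau(l)-t))}{g^*(l)}\,
\exp\!\Big[-\displaystyle\int_{\tau^{-1}(\tau(l)-t)}^{l}\frac{\mu^*+u}{g^*}\,d\xi\Big], & \tau(l)\ge t,\\[1ex]
0, & \tau(l)<t.
\end{cases}
\]
The second branch encodes the zero inflow flux $g^*(l_0)\varphi(l_0)=0$: these points are reached by characteristics entering through $l_0$, where no mass is injected. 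To justify the formula, we first check directly that it defines a strongly continuous semigroup on $L^1$. Its generator agrees with $\Lcal_{\rm tr}$ on $W^{1,1}$ functions vanishing at $l_0$, which we see by differentiating along characteristics. For $\Re\lambda$ large, $\lambda-\Lcal_{\rm tr}$ is bijective; this follows from Step 2 together with the explicit resolvent obtained from the variation-of-constants formula. The semigroup is then identified with $U_0$ by uniqueness of the semigroup generated by a given generator. Alternatively, we can appeal to the frozen-path uniqueness of Theorem~\ref{thm:main-frozen} with $p=0$ and the path $E\equiv E^*$ held constant. In either route, if $t\ge t_\sharp$ then $\tau(l)\le t_\sharp\le t$ for every $l\in\Om$. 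The set $\{\tau(l)=t\}$ is at most a single point, so a.e.\ $l$ falls in the zero branch, and $U_0(t)=0$ in $L^1$.

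\textbf{Step 2: no eigenvalues.} Let $\lambda\in\C$ and let $\varphi\in W^{1,1}$ solve the homogeneous problem. Setting $w:=g^*\varphi$, which lies in $W^{1,1}$ because $g^*\in W^{1,\infty}$, the equation becomes the linear ODE
\[
w'=-\frac{\lambda+\mu^*+u}{g^*}\,w,\qquad w(l_0)=0.
\]
Its coefficient is in $L^\infty$, since $g^*\ge g_M$, so the solution is absolutely continuous and unique. Multiplying by the integrating factor $\exp\int_{l_0}^l(\lambda+\mu^*+u)/g^*$ gives $w\equiv0$, hence $\varphi\equiv0$. This also shows that the spectrum of $\Lcal_{\rm tr}$ is empty. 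That is consistent with Step 1, since $U_0$ is nilpotent: $\sigma(U_0(t))=\{0\}$.

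\textbf{Main obstacle.} The main difficulty is making rigorous the identification in Step 1 between the abstract semigroup generated by $(\Lcal_{\rm tr},D(\Lcal_{\rm tr}))$ and the characteristic formula. We plan to compute the resolvent explicitly from the ODE in Step 2 with a right-hand side $f$:
\[
(\lambda-\Lcal_{\rm tr})^{-1}f(l)=\frac{1}{g^*(l)}\int_{l_0}^l f(s)\exp\!\Big[-\int_s^l\frac{\lambda+\mu^*+u}{g^*}\Big]ds .
\]
We then verify that this equals the Laplace transform of the formula in Step 1. The verification uses the substitution $s\mapsto\tau(l)-\tau(s)$, which turns the integral over $s\in[l_0,l]$ into a time integral over $[0,\tau(l)]$. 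Equality of the resolvents then identifies the semigroup.
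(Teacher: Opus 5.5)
Your proposal is correct and follows the same route as the paper. Both arguments use the travel-time coordinate $\tau$ and transport $g^*\varphi$ along characteristics with the survival factor. The solution vanishes once every characteristic has exited by time $t_\sharp$, with no re-entry because the inflow is zero. The eigenproblem is handled by the integrating-factor argument for $w=g^*\varphi$ with $w(l_0)=0$.

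Your resolvent/Laplace-transform step identifies the characteristic formula with the semigroup generated by $(\Lcal_{\rm tr},D(\Lcal_{\rm tr}))$; the paper takes this for granted. The step is sound: the substitution $s\mapsto\tau(l)-\tau(s)$ does reproduce the variation-of-constants resolvent.
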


As depicted in Figure~\ref{fig:fig2}, the space-time diagram illustrates the characteristic curves and the finite sweep-out time $t_\sharp$, visually proving the intrinsic bound and the finite-memory reduction required for the renewal equation.

\begin{figure}[htbp]
    \centering
    \includegraphics[width=0.8\textwidth]{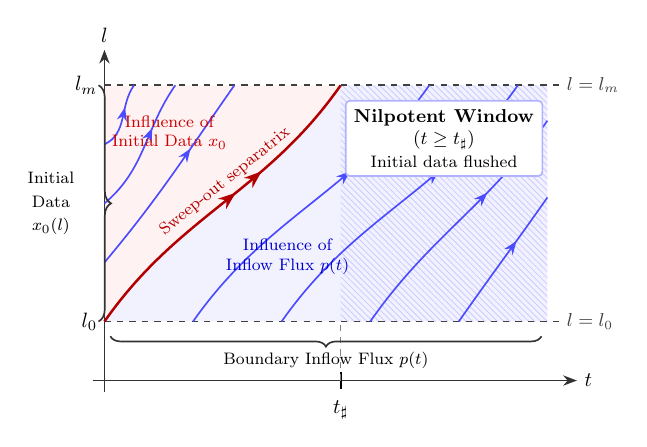}
    \caption{Characteristic Sweep-Out and Intrinsic Bounds. A space-time $(t,l)$ diagram showing the strictly increasing characteristic curves $L(s;t,l)$ flowing from the initial data slice $t=0$ and the inflow boundary $l=l_0$. The shaded nilpotent window ($t \ge t_\sharp$) highlights the region where the initial data $x_0$ has been entirely flushed from the system.}
    \label{fig:fig2}
\end{figure}

\begin{proposition}[Scalar renewal reduction]
\label{prop:main-renewal}
The scalar output perturbation satisfies
\[
e(t)=f(t)+\int_0^t\kappa(t-s)e(s)\,ds,
\qquad
f(t):=\langle\chi,U_0(t)v_0\rangle ,
\]
where
$\displaystyle 
\kappa(\tau)
=
\langle\chi,U_0(\tau)\sigma_{\rm int}\rangle
-
a_0\langle\chi,\Sigma_b(\tau)\rangle$.
Furthermore,
$\operatorname{supp}f\subset[0,t_\sharp]$,
$\operatorname{supp}\kappa\subset[0,t_\sharp]$,
and
$\kappa\in BV([0,t_\sharp])$.
\end{proposition}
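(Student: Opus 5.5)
We treat the linearized problem \eqref{eq:main-linearized} as an inhomogeneous linear transport equation in which the scalar $e(t)$ is, for the moment, a given forcing. Then $v$ is split by Duhamel's principle into three parts. The first is the homogeneous evolution of the initial datum, $U_0(t)v_0$. The second is the response to the interior source $\sigma_{\rm int}e(s)$, namely $\int_0^t U_0(t-s)\sigma_{\rm int}\,e(s)\,ds$. The third is the response to the boundary inflow $g^*(l_0)v(t,l_0)=-a_0e(t)$. For this boundary part we introduce the boundary-to-state kernel $\Sigma_b(\tau)$: the solution at time $\tau$ of the zero-initial-data transport problem driven by a unit Dirac inflow flux at $l_0$ at time $0$. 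By the method of characteristics it is explicit. Let $L(\tau;l_0)$ be the characteristic leaving $l_0$ at time $0$; then
\[
\Sigma_b(\tau)(l)=\frac{1}{g^*(l)}\exp\!\Big[-\int_{l_0}^{l}\frac{\mu^*+u}{g^*}\,d\xi\Big]\,\delta\big(\tau-\tau(l)\big),
\qquad
\tau(l):=\int_{l_0}^{l}\frac{d\xi}{g^*(\xi)} .
\]
Equivalently, paired against $\chi$,
\[
\langle\chi,\Sigma_b(\tau)\rangle=\chi(L(\tau;l_0))\,\Pi(\tau)\,\mathbf 1_{[0,t_\sharp]}(\tau),
\]
where $\Pi(\tau)$ is the survival factor along that characteristic. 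The boundary contribution is then $-a_0\int_0^t\Sigma_b(t-s)e(s)\,ds$. Pairing the full Duhamel representation with $\chi$ and using $e=\langle\chi,v\rangle$ gives the renewal equation with $f$ and $\kappa$ as stated. Since $U_0$ and the inflow problem are time-autonomous, the memory term is of convolution form.

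The support claims follow from Lemma~\ref{lem:main-sweep}. We have $U_0(t)=0$ for $t\ge t_\sharp$, so $f(t)=\langle\chi,U_0(t)v_0\rangle$ vanishes for $t\ge t_\sharp$. The same holds for the first term of $\kappa$. The second term is supported in $[0,t_\sharp]$ because a signal injected at $l_0$ exits through $l_m$ after exactly time $t_\sharp$, so $L(\tau;l_0)$ is defined only for $\tau\le t_\sharp$.

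The real work is the claim $\kappa\in BV([0,t_\sharp])$ and the justification of the Duhamel formula for the distributional source $\sigma_{\rm int}$. Under Assumption~\ref{ass:model-coefficients}, $\partial_Eg(E^*,\cdot)\in W^{1,\infty}$, and the stationary profile \eqref{eq:model-stationary-profile} is $W^{1,\infty}$. Hence $\sigma_{\rm int}\in L^\infty(\Om)\subset L^1$, with no Dirac part in the interior, and $U_0(\tau)\sigma_{\rm int}$ is a genuine $L^1$ mild solution.

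To prove the $BV$ regularity, we write $U_0(\tau)\sigma_{\rm int}$ along characteristics as
\[
(U_0(\tau)\sigma_{\rm int})(l)=\sigma_{\rm int}\big(L(-\tau;l)\big)\,J(\tau,l)\,\mathbf 1_{\{\tau<\tau(l)\}} .
\]
Here $J$ is the Lipschitz Jacobian-survival factor. After changing variables to the foot point $y=L(-\tau;l)$,
\[
\langle\chi,U_0(\tau)\sigma_{\rm int}\rangle=\int_{l_0}^{l_m}\chi\big(L(\tau;y)\big)\,\tilde J(\tau,y)\,\sigma_{\rm int}(y)\,\mathbf 1_{\{L(\tau;y)<l_m\}}\,dy .
\]
With $\chi\in W^{1,\infty}$ and the flow Lipschitz in $\tau$, this is Lipschitz in $\tau$. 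The only non-smooth effect is the moving cutoff, which the $L^\infty$ bound on $\sigma_{\rm int}$ also keeps Lipschitz. The second term $\chi(L(\tau;l_0))\Pi(\tau)$ is Lipschitz on $[0,t_\sharp]$, with at most a jump at $\tau=t_\sharp$ when it is extended by zero. Hence $\kappa$ is Lipschitz on $[0,t_\sharp)$ with finitely many jumps, and in particular $\kappa\in BV([0,t_\sharp])$.

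We expect the main obstacle to be the rigorous weak formulation linking the boundary source to $\Sigma_b$. We would handle it by testing against the adjoint characteristics, in line with the weak formulation \eqref{eq:model-weak-form}, and approximating $e$ by smooth functions, using the $L^1$-continuity of the inflow-to-state map.
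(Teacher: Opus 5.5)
Your proposal is correct and follows essentially the same route as the paper. Both split $v$ by Duhamel into the free transport $U_0(t)v_0$, the interior-source response to $\sigma_{\rm int}e$, and the boundary response through $\Sigma_b$ driven by the flux $-a_0e$; both pair with $\chi$, use the sweep-out lemma for the supports, and use characteristics to get $\kappa\in BV([0,t_\sharp])$. Your regularity step is also more explicit and slightly sharper than the paper's: the paper treats $\sigma_{\rm int}$ only as a measure with possible transported atoms, whereas you correctly note that under the standing assumptions $\sigma_{\rm int}\in L^\infty(\Om)$, so $\kappa$ is Lipschitz on $[0,t_\sharp)$ with at most one jump, at $t_\sharp$, coming from the boundary term.
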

\begin{definition}[Characteristic function]
\label{def:main-characteristic}
For \(\lambda\in\C\), let \(\varphi_1^\lambda\) solve
\[
\frac{d}{dl}\big(g^*\varphi_1^\lambda\big)
+
(\lambda+\mu^*+u)\varphi_1^\lambda
=
\sigma_{\rm int},
\qquad
g^*(l_0)\varphi_1^\lambda(l_0)=-a_0 .
\]
Define
$\displaystyle 
\Ev(\lambda)
:=
\langle\chi,\varphi_1^\lambda\rangle$.
\end{definition}
\begin{lemma}[Properties of \(\Ev\)]
\label{lem:main-Ev-props}
The map \(\Ev:\C\to\C\) is entire,
\[
\Ev(\bar\lambda)=\overline{\Ev(\lambda)},
\qquad
\Ev(\lambda)=\int_0^{t_\sharp}e^{-\lambda\tau}\kappa(\tau)\,d\tau .
\]
Moreover,
$\Ev(\lambda)\to0$ as $\lambda\to+\infty$, $\lambda\in\R$,
and, for \(\Re\lambda\ge0\),
$\displaystyle 
\abs{\Ev(\lambda)}\le G_0$,
where
\[
G_0:=
\int_{l_0}^{l_m}
\frac{\chi(l)}{g^*(l)}
\Pi^0(l)
\left[
\abs{a_0}
+
\int_{l_0}^{l}
\Pi^0(s)^{-1}\,d\abs{\sigma_{\rm int}}(s)
\right]dl,
\]
with
$\displaystyle 
\Pi^0(l):=
\exp\!\left[
-\int_{l_0}^{l}\frac{\mu^*(\xi)+u(\xi)}{g^*(\xi)}\,d\xi
\right]$.
Finally,
$\displaystyle 
\Ev(0)=\Phi'(E^*)$.
\end{lemma}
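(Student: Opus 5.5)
The plan is to solve the defining ODE of Definition~\ref{def:main-characteristic} explicitly by variation of constants along the stationary characteristics, and to read off every claimed property from that formula. With
\[
\Pi^\lambda(l):=\exp\!\Big[-\int_{l_0}^{l}\frac{\lambda+\mu^*(\xi)+u(\xi)}{g^*(\xi)}\,d\xi\Big]=e^{-\lambda\tau(l)}\Pi^0(l),
\qquad \tau(l):=\int_{l_0}^{l}\frac{d\xi}{g^*(\xi)},
\]
the solution is
\[
g^*(l)\varphi_1^\lambda(l)=\Pi^\lambda(l)\Big[-a_0+\int_{l_0}^{l}\Pi^\lambda(s)^{-1}\,d\sigma_{\rm int}(s)\Big],
\]
where $\sigma_{\rm int}$ is a finite measure because $(\partial_Eg)(E^*,\cdot)x^*\in W^{1,\infty}$ by Assumption~\ref{ass:model-coefficients}. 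Pairing this with $\chi$ gives $\Ev(\lambda)$ as an integral of $\chi/g^*$ times a kernel depending on $\lambda$ only through $e^{-\lambda(\tau(l)-\tau(s))}$ and $e^{-\lambda\tau(l)}$, with $0\le\tau\le t_\sharp$.

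Entireness follows by differentiating under the integral: the integrand is entire in $\lambda$ and locally uniformly bounded on the compact domain. Conjugate symmetry holds because all data are real, so $\overline{\Ev(\lambda)}$ is the same expression with $\bar\lambda$.

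For the Laplace representation I would change variables in the formula above. Substituting $\tau=\tau(l)$ in the boundary term, and $\tau=\tau(l)-\tau(s)$ in the source term, converts $\Ev(\lambda)$ into $\int_0^{t_\sharp}e^{-\lambda\tau}\kappa(\tau)\,d\tau$. Here $\kappa$ is exactly the kernel of Proposition~\ref{prop:main-renewal}. The point is that $U_0(\tau)$ and the boundary response $\Sigma_b(\tau)$ are given by the same characteristic formulas, namely transport by time $\tau$ with survival $\Pi^0$. Equivalently, and perhaps more cleanly, $\varphi_1^\lambda$ equals $\int_0^\infty e^{-\lambda\tau}[U_0(\tau)\sigma_{\rm int}-a_0\Sigma_b(\tau)]\,d\tau$, the resolvent of the inflow problem. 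This is justified for $\Re\lambda$ large by semigroup theory, then for all $\lambda$ by uniqueness (Lemma~\ref{lem:main-sweep}) together with analytic continuation, since the integrand is supported in $[0,t_\sharp]$.

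This identification, with careful bookkeeping of the boundary Dirac term and of the sign of $a_0$, is the main obstacle. I would verify it by checking that both sides solve the same ODE with the same boundary value.

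The remaining properties are then straightforward.
\begin{itemize}
\item \textbf{Decay.} Since $\kappa\in BV\subset L^1$, dominated convergence gives $\Ev(\lambda)\to0$ as $\lambda\to+\infty$.
\item \textbf{Bound.} For $\Re\lambda\ge0$ we have $|e^{-\lambda(\tau(l)-\tau(s))}|\le1$, so bounding the explicit formula termwise yields exactly $G_0$.
\item \textbf{Value at zero.} At $\lambda=0$, differentiate the stationary equation $\frac{d}{dl}(g(E,l)x_E)=-(\mu(E,l)+u)x_E$, $g(E,l_0)x_E(l_0)=p$, in $E$ at $E^*$. Then $w=\partial_Ex_E|_{E^*}$ satisfies
\[
\frac{d}{dl}(g^*w)+(\mu^*+u)w=-\frac{d}{dl}\big((\partial_Eg)x^*\big)-(\partial_E\mu)x^*=\sigma_{\rm int},
\qquad
g^*(l_0)w(l_0)=-(\partial_Eg)(E^*,l_0)x^*(l_0)=-a_0.
\]
Hence $w=\varphi_1^0$ by uniqueness, and $\Ev(0)=\langle\chi,w\rangle=\Phi'(E^*)$. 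Differentiation under the integral in $\Phi$ is justified by the $C^1$ hypotheses of Proposition~\ref{prop:main-decomp}.
\end{itemize}
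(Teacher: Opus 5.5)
Your proposal is correct and follows the paper's skeleton. The shared steps are: the integrating-factor formula for $g^*\varphi_1^\lambda$ with $\Pi^\lambda=e^{-\lambda\tau(l)}\Pi^0$, entireness by local domination, conjugate symmetry from real data, the termwise bound by $G_0$ for $\Re\lambda\ge0$, and $\Ev(0)=\Phi'(E^*)$ by differentiating the stationary flux problem in $E$ and using uniqueness.

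You genuinely differ on the Laplace representation and, through it, on the decay.

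\textbf{The paper's route.} It substitutes a mode $v=e^{\lambda t}\varphi$, $e=e^{\lambda t}\hat e$ into both the transport system and the renewal equation for $t\ge t_\sharp$, and declares the two consistency factors equal. It then proves $\Ev(\lambda)\to0$ directly from the explicit formula, with a remark about the diagonal $s=l$ for atoms of $\sigma_{\rm int}$.

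\textbf{Your route.} The substitution $\tau=\tau(l)$, resp.\ $\tau=\tau(l)-\tau(s)$, together with $d\tau=dl/g^*(l)$ and Fubini, identifies the two functions for every $\lambda\in\C$. For $\tau<t_\sharp$ one has $\langle\chi,\Sigma_b(\tau)\rangle=\chi(L(\tau;l_0))\,\Pi^0(L(\tau;l_0))$. The $\sigma_{\rm int}$-part is transported with survival ratio $\Pi^0(l)/\Pi^0(s)$. So the change of variables reproduces the explicit formula term by term, including the sign of $a_0$.

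\textbf{What each buys.} Your computation is an actual proof of the functional identity. Read literally, the modal comparison only gives $\Ev(\lambda)=1=\widehat\kappa(\lambda)$ at values of $\lambda$ carrying a mode with $\hat e\ne0$; from $\hat e=\hat e\,\Ev(\lambda)$ and $\hat e=\hat e\,\widehat\kappa(\lambda)$ nothing follows elsewhere. Once the identity holds, your decay argument is immediate by dominated convergence on $[0,t_\sharp]$, because $\kappa$ is a bounded function. Your observation that $(\partial_Eg)(E^*,\cdot)x^*\in W^{1,\infty}$ in fact gives $\sigma_{\rm int}\in L^\infty$, so the paper's atom and diagonal bookkeeping is unnecessary. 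The paper's route is shorter and ties the identity directly to the eigenmode picture used in Theorem~\ref{thm:main-spectrum}.

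Of your two justifications, prefer the explicit change of variables. The resolvent-plus-analytic-continuation version, with a measure source and a boundary impulse, needs an extended (boundary-lifted or integrated-semigroup) framework that you would have to set up. The direct computation needs nothing beyond Fubini.
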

\begin{theorem}[Linearized spectral characterization and growth bound]
\label{thm:main-spectrum}
Let
$\omega_0
:=
\sup\{\Re\lambda:\Ev(\lambda)=1\}$ with the convention
$\sup\varnothing:=-\infty$.
Then the nontrivial feedback point spectrum of the linearized generator is
$\displaystyle 
\{\lambda\in\C:\Ev(\lambda)=1\}$.
The root set is discrete and finite in every half-plane
$\displaystyle 
\{\Re\lambda\ge-\omega\}$.
The scalar renewal feedback dynamics has growth bound \(\omega_0\). Hence the following conclusions hold:
\[
\omega_0<0
\qquad\Longrightarrow\qquad
\text{linearized exponential stability},\qquad \text{and} \qquad
\omega_0>0
\qquad\Longrightarrow\qquad
\text{linearized instability}.
\]
\end{theorem}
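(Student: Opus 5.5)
The plan is to work entirely through the scalar renewal equation of Proposition~\ref{prop:main-renewal} and the Laplace-transform representation of Lemma~\ref{lem:main-Ev-props}.

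\emph{Point spectrum.} Fix $\lambda\in\C$ and look for an exponential solution $v(t,\cdot)=e^{\lambda t}\varphi$ of \eqref{eq:main-linearized} with its boundary condition. The eigen-problem reads
\[
\lambda\varphi+\tfrac{d}{dl}(g^*\varphi)+(\mu^*+u)\varphi=\sigma_{\rm int}\,e,
\qquad
g^*(l_0)\varphi(l_0)=-a_0e,
\qquad
e=\langle\chi,\varphi\rangle .
\]
If $e=0$, then $\varphi$ solves the homogeneous problem of Lemma~\ref{lem:main-sweep}, so $\varphi=0$. Hence every eigenfunction has $e\neq0$. By linearity of the eigen-problem in the data, $\varphi=e\,\varphi_1^\lambda$ with $\varphi_1^\lambda$ as in Definition~\ref{def:main-characteristic}. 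Applying $\langle\chi,\cdot\rangle$ gives $e=e\,\Ev(\lambda)$, so $\Ev(\lambda)=1$. Conversely, if $\Ev(\lambda)=1$, then $\varphi_1^\lambda$ is a nontrivial eigenfunction. This identifies the nontrivial feedback point spectrum with the root set $\{\Ev(\lambda)=1\}$.

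\emph{Discreteness and finiteness.} Since $\Ev$ is entire and tends to $0$ along $\lambda\to+\infty$ (Lemma~\ref{lem:main-Ev-props}), $\Ev-1\not\equiv0$, so its zeros are isolated. For finiteness in $\{\Re\lambda\ge-\omega\}$, I will use $\Ev(\lambda)=\int_0^{t_\sharp}e^{-\lambda\tau}\kappa(\tau)\,d\tau$ with $\kappa\in BV$. Integrating by parts once gives
\[
\abs{\Ev(\lambda)}\le \frac{C_\omega}{\abs{\lambda}}
\qquad\text{on }\{\Re\lambda\ge-\omega\},
\]
where $C_\omega$ depends on $e^{\omega t_\sharp}$, $\abs{\kappa(0^+)}$ and the total variation of $\kappa$, using that $\kappa$ vanishes beyond $t_\sharp$. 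Thus $\abs{\Ev}<1$ for $\abs{\lambda}$ large in that half-plane. The roots therefore lie in a compact set, and by isolatedness they are finitely many. This also shows that $\omega_0$ is attained, or equals $-\infty$.

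\emph{Growth bound.} The resolvent kernel $\rho$ of the renewal equation satisfies $\hat\rho=\Ev/(1-\Ev)$, and $e=f+\rho*f$. Since $f$ has support in $[0,t_\sharp]$ and is bounded, it suffices to show that $e^{-(\omega_0+\varepsilon)t}\rho$ lies in the space of finite measures on $[0,\infty)$. The kernel $\kappa$ has compact support, so $e^{-\gamma t}\kappa\in L^1$ for every $\gamma$. I will apply the Paley--Wiener theorem for Volterra equations \cite{gripenberg1990volterra} to the shifted kernel $e^{-\gamma t}\kappa$ with $\gamma=\omega_0+\varepsilon$. Its hypothesis is that $1-\Ev(\lambda+\gamma)\neq0$ for all $\Re\lambda\ge0$, which holds by the definition of $\omega_0$. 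This yields $\abs{e(t)}\le C_\varepsilon e^{(\omega_0+\varepsilon)t}\norm{v_0}$.

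Sharpness comes from the other direction. A root $\lambda_0$ with $\Re\lambda_0=\omega_0$ gives the explicit solution $e^{\lambda_0t}\varphi_1^{\lambda_0}$, so the growth bound is exactly $\omega_0$. If $\omega_0>0$, this exponentially growing eigen-solution gives linearized instability. If $\omega_0<0$, choose $\varepsilon<-\omega_0$. Then $e$ decays exponentially, and the full perturbation $v(t)=U_0(t)v_0+\int_0^tU_0(t-s)[\text{source driven by }e(s)]\,ds$ decays as well, because $U_0$ is nilpotent by Lemma~\ref{lem:main-sweep} and so only a window of length $t_\sharp$ of $e$ contributes.

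The main obstacle is the measure-theoretic bookkeeping. $\sigma_{\rm int}$ is a distribution, the derivative of a $W^{1,\infty}$ function, and the boundary term $\Sigma_b$ makes $\kappa$ only BV. Both the $1/\abs{\lambda}$ decay estimate and the verification of the Paley--Wiener hypothesis for the measure kernel need care here. I will handle this by working with $\kappa$ as a BV function, which is legitimate by Proposition~\ref{prop:main-renewal}, rather than with $\sigma_{\rm int}$ directly.
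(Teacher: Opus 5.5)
Your proposal is correct and follows essentially the same route as the paper. It uses the modal ansatz, with the $\hat e=0$ case eliminated by Lemma~\ref{lem:main-sweep}. Isolatedness of roots comes from $\Ev$ being entire and decaying on the positive real axis, finiteness in half-planes from the $C_\omega/\abs{\lambda}$ bound obtained by integrating $\kappa\in BV$ by parts, and the growth bound from the Paley--Wiener resolvent criterion applied to the exponentially shifted kernel. Your extra step, passing decay from $e$ to the full perturbation $v$ via nilpotency of $U_0$ and $\Sigma_b$, is a small addition that the paper leaves implicit.
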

\begin{theorem}[Instability above the closure threshold]
\label{thm:main-instability}
If
$\displaystyle 
\Phi'(E^*)>1$,
then there exists \(\lambda^*>0\) such that
$\displaystyle 
\Ev(\lambda^*)=1$.
Consequently,
$\displaystyle 
\omega_0\ge\lambda^*>0$,
and the equilibrium is linearly unstable.
\end{theorem}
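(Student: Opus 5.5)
Restricting $\Ev$ to the real axis reduces the statement to an intermediate value argument. By Lemma~\ref{lem:main-Ev-props}, $\Ev$ is entire and satisfies $\Ev(\bar\lambda)=\overline{\Ev(\lambda)}$. For real $\lambda$ this gives $\Ev(\lambda)=\overline{\Ev(\lambda)}$, so $\Ev$ maps $[0,\infty)$ into $\R$. The restriction $\lambda\mapsto\Ev(\lambda)$ is continuous on $[0,\infty)$, in fact real-analytic. Alternatively, realness follows directly from the Laplace representation $\Ev(\lambda)=\int_0^{t_\sharp}e^{-\lambda\tau}\kappa(\tau)\,d\tau$ with real $\kappa$. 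The same representation gives continuity by dominated convergence, since $\kappa\in BV([0,t_\sharp])$ is bounded by Proposition~\ref{prop:main-renewal}.

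Next come the two endpoint values. At $\lambda=0$, the identity $\Ev(0)=\Phi'(E^*)$ from Lemma~\ref{lem:main-Ev-props} and the hypothesis give $\Ev(0)>1$. At the other end, Lemma~\ref{lem:main-Ev-props} gives $\Ev(\lambda)\to0$ as $\lambda\to+\infty$ along the reals. This limit also follows at once from $|\Ev(\lambda)|\le\|\kappa\|_\infty\int_0^{t_\sharp}e^{-\lambda\tau}d\tau\le\|\kappa\|_\infty/\lambda$. Hence there is $\Lambda>0$ with $\Ev(\Lambda)<1$. The intermediate value theorem on $[0,\Lambda]$ then yields $\lambda^*\in(0,\Lambda)$ with $\Ev(\lambda^*)=1$, and $\lambda^*>0$ because $\Ev(0)\neq1$. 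One may take $\lambda^*$ to be the largest real root, which exists because the root set is finite in $\{\Re\lambda\ge0\}$ (Theorem~\ref{thm:main-spectrum}); this choice is not needed for the conclusion.

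For the consequences, $\lambda^*$ lies in $\{\lambda:\Ev(\lambda)=1\}$, so the definition of $\omega_0$ as a supremum gives $\omega_0\ge\Re\lambda^*=\lambda^*>0$. The instability conclusion of Theorem~\ref{thm:main-spectrum} then applies. More concretely, $\lambda^*$ belongs to the feedback point spectrum. An eigenfunction can be written as $\varphi=e\,\varphi_1^{\lambda^*}$ with $e=1$: this solves the resolvent problem, and its output $\langle\chi,\varphi\rangle=\Ev(\lambda^*)=1$ is consistent. The eigenfunction generates the exponentially growing solution $v(t)=e^{\lambda^*t}\varphi_1^{\lambda^*}$ of \eqref{eq:main-linearized}, which gives linear instability directly.

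The only delicate point is to confirm that the identity $\Ev(0)=\Phi'(E^*)$ and the real-valuedness on $\R$ are available as stated. Everything else is the intermediate value theorem, so the argument amounts to checking that the hypotheses of Lemma~\ref{lem:main-Ev-props} are in force at the equilibrium $(x^*,E^*)$. This requires $\Phi\in C^1$ near $E^*$, as in Proposition~\ref{prop:main-decomp}.
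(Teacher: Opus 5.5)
Your proposal is correct and follows essentially the same route as the paper: $\Ev(0)=\Phi'(E^*)>1$, $\Ev(\lambda)\to0$ along the positive real axis, real-valuedness and continuity of $\Ev$ on $[0,\infty)$, and the intermediate value theorem give a root $\lambda^*>0$, so $\omega_0\ge\lambda^*>0$ and instability follows from Theorem~\ref{thm:main-spectrum}. Your additions (realness via conjugate symmetry, the direct bound $\abs{\Ev(\lambda)}\le\|\kappa\|_\infty/\lambda$, and the explicit growing mode $e^{\lambda^*t}\varphi_1^{\lambda^*}$) are consistent with the paper's lemmas but not needed for the conclusion.
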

\begin{theorem}[Linearized stability below the total gain threshold]
\label{thm:main-gain-stability}
If
$\displaystyle 
G_0<1$,
then for any $\lambda$ such that $\Re\lambda\ge0$,
$\Ev(\lambda)\ne1$.
Hence
$\displaystyle 
\omega_0<0$,
and the linearized feedback dynamics is exponentially stable. Moreover,
\[
G_0<1
\qquad\Longrightarrow\qquad
\abs{\Phi'(E^*)}<1.
\]
If the quadratic nonlinear remainder estimate of
Theorem~\ref{thm:main-nonlinear-stability} holds, then the nonlinear closed-loop
equilibrium is locally exponentially asymptotically stable.
\end{theorem}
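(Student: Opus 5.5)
The argument has three parts: the root exclusion in the right half-plane, the consequence $\abs{\Phi'(E^*)}<1$, and the nonlinear statement.

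\textbf{Step 1: no roots with $\Re\lambda\ge0$.} By Lemma~\ref{lem:main-Ev-props}, $\abs{\Ev(\lambda)}\le G_0$ whenever $\Re\lambda\ge0$. So if $G_0<1$, then $\abs{\Ev(\lambda)}<1$ on the closed right half-plane, and $\Ev(\lambda)=1$ has no solution there.

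\textbf{Step 2: $\omega_0<0$.} Excluding roots with $\Re\lambda\ge0$ only gives $\omega_0\le0$, so strictness has to be argued separately. Theorem~\ref{thm:main-spectrum} says the root set is finite in every half-plane $\{\Re\lambda\ge-\omega\}$. Take $\omega=1$. The finitely many roots in $\{\Re\lambda\ge-1\}$ all have $\Re\lambda<0$, because Step 1 rules out the rest. Hence their real parts have a maximum that is strictly negative. It follows that $\omega_0\le\max\{-1,\max_j\Re\lambda_j\}<0$, with the convention $\sup\varnothing=-\infty$ if there are no roots at all. Theorem~\ref{thm:main-spectrum} then gives exponential stability of the linearized feedback dynamics.

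An alternative route to strictness avoids the finiteness statement. Since $\kappa$ has support in $[0,t_\sharp]$, on the strip $-\epsilon\le\Re\lambda\le0$ we have
\[
\abs{\Ev(\lambda)}\le\int_0^{t_\sharp}e^{\epsilon\tau}\abs{\kappa(\tau)}\,d\tau .
\]
This stays close to the $\Re\lambda\ge0$ bound as $\epsilon\to0$. The catch is that $\int_0^{t_\sharp}\abs{\kappa}$ need not equal $G_0$. I would therefore rely on the finiteness argument and treat that bookkeeping as the only delicate point.

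\textbf{Step 3: $\abs{\Phi'(E^*)}<1$.} Lemma~\ref{lem:main-Ev-props} gives $\Ev(0)=\Phi'(E^*)$, and $\lambda=0$ lies in the region $\Re\lambda\ge0$. Therefore $\abs{\Phi'(E^*)}=\abs{\Ev(0)}\le G_0<1$.

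\textbf{Step 4: nonlinear stability.} I would write the closed-loop perturbation $e$ as the renewal equation of Proposition~\ref{prop:main-renewal} plus a remainder bounded quadratically in $\sup\abs{e}$, as assumed in Theorem~\ref{thm:main-nonlinear-stability}. The resolvent of $\kappa$ decays at some rate $\omega'\in(\omega_0,0)$, by the spectrum-determined growth in Theorem~\ref{thm:main-spectrum}. A variation-of-constants formula and a standard bootstrap in the weighted norm $\sup_t e^{-\omega' t}\abs{e(t)}$ then close the estimate for small initial data, which gives local exponential asymptotic stability. Since the nonlinear part is conditional on the assumed estimate, the main obstacle in the whole proof is the strictness argument in Step 2.
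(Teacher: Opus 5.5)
Your proposal is correct and follows the paper's proof. The bound $\abs{\Ev(\lambda)}\le G_0<1$ on $\Re\lambda\ge0$ excludes roots there, and $\lambda=0$ gives $\abs{\Phi'(E^*)}=\abs{\Ev(0)}\le G_0<1$. The nonlinear claim is Theorem~\ref{thm:main-nonlinear-stability} applied once $\omega_0<0$ is known.

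Your Step 2 usefully spells out the strictness $\omega_0<0$, which the paper asserts in one line. As a cleaner alternative to the $\int\abs{\kappa}$ bookkeeping, the explicit formula \eqref{eq:proof-Ev-explicit} directly gives $\abs{\Ev(\lambda)}\le e^{\epsilon t_\sharp}G_0$ for $\Re\lambda\ge-\epsilon$. Any $\epsilon>0$ with $e^{\epsilon t_\sharp}G_0<1$ then yields $\omega_0\le-\epsilon$.
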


As illustrated in Figure~\ref{fig:fig4}, the map of the point spectrum in the complex plane highlights the spectral growth bound $\omega_0$. Crucially, the root at the origin connects to our headline identity $\mathcal{E}(0) = \Phi'(E^*)$, visually demonstrating that the continuous system destabilizes as the real root crosses into the right half-plane.

\begin{figure}[htbp]
    \centering
    \includegraphics[width=0.8\textwidth]{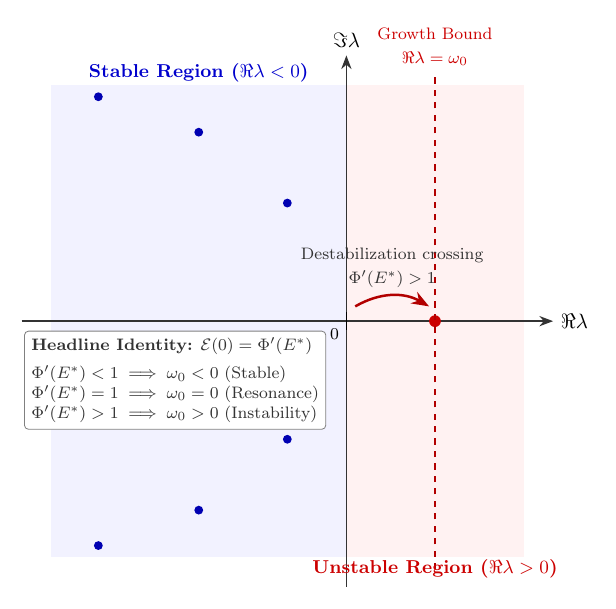}
    \caption{The Feedback Spectrum and Zero-Crossing. A map of the point spectrum in the complex $\lambda$-plane illustrating the spectral growth bound $\Re\lambda = \omega_0$ (dashed line). The relationship $\mathcal{E}(0) = \Phi'(E^*)$ directly connects the scalar derivative from the stationary closure to the dynamical instability of the continuous system.}
    \label{fig:fig4}
\end{figure}

\begin{theorem}[Nonlinear stability under quadratic remainder]
\label{thm:main-nonlinear-stability}
Assume
$\displaystyle 
\omega_0<0$.
Let \(\omega\in(\omega_0,0)\). Suppose the nonlinear scalar remainder
\(\Gcal[e,v_0]\) satisfies
\begin{equation}
\label{eq:main-quadratic-remainder}
\norm{\Gcal[e,v_0]}_\omega
\le
C\big(\norm{v_0}_{BV}+\norm{e}_\omega\big)^2
\end{equation}
for all sufficiently small \(\norm{v_0}_{BV}\) and \(\norm{e}_\omega\). Then
there exist \(\delta>0\) and \(M\ge1\) such that
\[
\norm{v_0}_{BV}\le\delta
\qquad\Longrightarrow\qquad
\norm{x(t)-x^*}_{L^1(\Om)}+\abs{E(t)-E^*}
\le
M e^{\omega t}\norm{v_0}_{BV},
\qquad
t\ge0 .
\]
If \eqref{eq:model-C2E} holds, then \eqref{eq:main-quadratic-remainder} is valid.
\end{theorem}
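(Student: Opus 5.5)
Our plan is to write the perturbed closed loop as a scalar Volterra equation with a small quadratic forcing, and then close it by a contraction argument in the exponentially weighted space $\norm{h}_\omega:=\sup_{t\ge0}e^{-\omega t}\abs{h(t)}$. Put $v:=x-x^*$ and $e:=E-E^*$. Subtracting the stationary equation \eqref{eq:model-stationary-state}--\eqref{eq:model-stationary-state-bc} from \eqref{eq:model-state} and expanding $g(E^*+e,\cdot)$ and $\mu(E^*+e,\cdot)$ to first order around $E^*$ gives \eqref{eq:main-linearized} plus a remainder. That remainder consists of an interior term $\rho_{\rm int}(t)$ and a boundary term $\rho_b(t)$, and it collects the products $e\,\partial_Eg\,v$ and $e\,\partial_E\mu\,v$ together with the Taylor remainders of order $e^2$. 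By Duhamel's formula along $U_0$ and the boundary-input operator $\Sigma_b$, exactly as in Proposition~\ref{prop:main-renewal}, we obtain
\[
e(t)=f(t)+\int_0^t\kappa(t-s)e(s)\,ds+\Gcal[e,v_0](t),
\]
where $\Gcal[e,v_0]$ is the output $\langle\chi,\cdot\rangle$ of the linear response to $(\rho_{\rm int},\rho_b)$.

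Next we invert the linear part. Since $\kappa$ is $BV$ with support in $[0,t_\sharp]$ (Proposition~\ref{prop:main-renewal}), Lemma~\ref{lem:main-Ev-props} gives $\hat\kappa=\Ev$. Theorem~\ref{thm:main-spectrum} says there are no roots of $\Ev=1$ with $\Re\lambda\ge\omega$, so the Paley--Wiener theorem for measure kernels (Gripenberg et al.) applies to the weighted kernel $e^{-\omega\tau}\kappa(\tau)$. It shows that the resolvent $r$ of $\kappa$ satisfies $e^{-\omega\cdot}r\in L^1(0,\infty)$. Consequently the map $h\mapsto h+r*h$ is bounded on $\norm{\cdot}_\omega$ with some constant $M_1$. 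For the forcing, $f$ has support in $[0,t_\sharp]$ and satisfies $\abs{f}\le\norm{\chi}_\infty\norm{v_0}_{L^1}$, so $\norm{f}_\omega\le C\norm{v_0}_{BV}$.

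Then $e$ is a fixed point of $\mathcal F(e):=(I+r*)(f+\Gcal[e,v_0])$. Under \eqref{eq:main-quadratic-remainder},
\[
\norm{\mathcal F(e)}_\omega\le M_1\big(C\norm{v_0}_{BV}+C(\norm{v_0}_{BV}+\norm{e}_\omega)^2\big),
\]
so $\mathcal F$ maps the ball $\{\norm{e}_\omega\le 2M_1C\delta\}$ into itself when $\delta$ is small. We also need a Lipschitz estimate $\norm{\Gcal[e_1,v_0]-\Gcal[e_2,v_0]}_\omega\le C(\norm{v_0}_{BV}+\norm{e_1}_\omega+\norm{e_2}_\omega)\norm{e_1-e_2}_\omega$. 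This comes from the same bilinear structure, using the $L^1$ comparison of Proposition~\ref{prop:main-volterra} on each window $[0,t]$, and it makes $\mathcal F$ a contraction. By uniqueness in Theorem~\ref{thm:main-wellposedness}, applied on each $[0,T]$, the fixed point coincides with the true $E-E^*$, so $\abs{E(t)-E^*}\le Me^{\omega t}\norm{v_0}_{BV}$. To get the bound on $v$, we write $v(t)=U_0(t)v_0+\int_0^t U_0(t-s)[\sigma_{\rm int}e(s)+\rho_{\rm int}(s)]\,ds$ plus the boundary contributions. Finite sweep-out (Lemma~\ref{lem:main-sweep}) turns this into a convolution over a window of length $t_\sharp$, which yields $\norm{v(t)}_{L^1}\le Ce^{\omega t}\norm{v_0}_{BV}$.

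The main obstacle is the last claim: verifying \eqref{eq:main-quadratic-remainder} under \eqref{eq:model-C2E}. The term $\partial_l\big((g(E,\cdot)-g^*-\partial_Eg^*e)x\big)$ has order $e^2$ but contains a derivative. We would first move the derivative onto $\chi$ through the weak formulation \eqref{eq:model-weak-form}, using $\chi\in W^{1,\infty}$ and the $W^{1,\infty}$ bound on $\partial_{EE}g$. The term $e\,\partial_l(\partial_Eg\,v)$ requires $v(t)\in BV$ uniformly with weight $e^{\omega t}$. For this we propagate the BV bound of Theorem~\ref{thm:main-frozen} along the perturbed flow, again exploiting sweep-out so that the estimate involves only the past window of length $t_\sharp$. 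This yields $\abs{v(t)}_{BV}\le C(\norm{v_0}_{BV}+\sup_{[t-t_\sharp,t]}\abs{e})$ locally, and the weighted quadratic bound follows.
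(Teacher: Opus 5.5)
\textbf{Gap: the contraction step.} Your contraction for $\mathcal F(e)=(I+r*)(f+\Gcal[e,v_0])$ needs a Lipschitz bound
\[
\norm{\Gcal[e_1,v_0]-\Gcal[e_2,v_0]}_\omega\le C\big(\norm{v_0}_{BV}+\norm{e_1}_\omega+\norm{e_2}_\omega\big)\norm{e_1-e_2}_\omega .
\]
The first part of the theorem assumes only the quadratic size bound \eqref{eq:main-quadratic-remainder}, so as written your argument proves the conclusion only under an extra hypothesis. The justification you offer also does not give that bound:
\begin{itemize}
\item Proposition~\ref{prop:main-volterra} holds on a fixed $[0,T]$ with $C_T=L_M[\norm{p}_\infty/g_M+2M_x(T)+C_{BV}]$. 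Here $M_x(T)=\norm{x_0}_{L^1}+\int_0^Tp$ grows linearly for constant inflow $p>0$, and $L_M$, $C_{BV}$ (taken on the box $[0,M_T]$) are nondecreasing in $T$. So nothing is uniform on $[0,\infty)$, which the weighted norm $\norm{\cdot}_\omega$ requires.
\item The proposition is stated only for paths in $\Btl$, whereas your $\norm{\cdot}_\omega$-ball contains non-Lipschitz $e$.
\item Differences of Taylor remainders $R_g(e_1)-R_g(e_2)$ are $O\big((\abs{e_1}+\abs{e_2})\abs{e_1-e_2}\big)$ only if $\partial_Eg$ is Lipschitz in $E$, i.e.\ under \eqref{eq:model-C2E}, which the first part does not assume.
\end{itemize}
A uniform, windowed difference estimate near $(x^*,E^*)$ could probably be proved, but that is additional work you have not supplied.

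\textbf{How to fix it.} The fixed-point construction is unnecessary, and dropping it removes the gap. Theorem~\ref{thm:main-wellposedness} already gives a unique solution on every $[0,T]$. Set $e:=E-E^*$ for that solution and argue a priori; this is the paper's quadratic bootstrap, made precise. Both $\Gcal$ and $h\mapsto r*h$ are causal, so applying \eqref{eq:main-quadratic-remainder} to $e\mathbf 1_{[0,T]}$ gives
\[
y(T)\le a+b\big(\norm{v_0}_{BV}+y(T)\big)^2,
\qquad
y(T):=\sup_{0\le t\le T}e^{-\omega t}\abs{e(t)},
\]
with $a=M_1\norm{f}_\omega$ and $b=M_1C$. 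The function $y$ is continuous and nondecreasing, and $y(0)=\abs{\langle\chi,v_0\rangle}\le a$. For small $\norm{v_0}_{BV}$ the value $2a$ violates the inequality, so $y(T)<2a$ for all $T$.

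\textbf{Secondary issue: the state estimate.} Because you expand around $g^*$, $\rho_{\rm int}$ contains $\partial_l(e\,\partial_Eg^*\,v)$. Bounding $\int U_0(t-s)\rho_{\rm int}(s)\,ds$ in $L^1$ then needs a decaying bound on $\abs{v(s)}_{BV}$, which the conditional part does not provide. Lemma~\ref{lem:app-window} avoids this: it transports $v$ along the perturbed velocity $g(E^*+e,\cdot)$ and treats only $-\partial_l(\Delta g\,x^*)-\Delta c\,x^*$ and the inflow $-\Delta g(t,l_0)x^*(l_0)$ as sources. $L^1$-contractivity and sweep-out alone then give
\[
\norm{v(t)}_{L^1}\le C_\sharp\int_{(t-T_\sharp)_+}^t\abs{e(s)}\,ds+\mathbf 1_{\{t<T_\sharp\}}\norm{v_0}_{L^1}.
\]

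\textbf{Last claim.} Your plan for verifying \eqref{eq:main-quadratic-remainder} under \eqref{eq:model-C2E} matches Appendix~\ref{app:nonlinear}, with one correction. The derivative must be moved onto the backward transport adjoint with terminal datum $\chi$, not onto $\chi$ itself, because $\Gcal$ pairs $\chi$ with the transported remainder. Combine this with a windowed $BV$ bound for $v$.
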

\subsection{Rank-one adjoint reduction}
\label{subsec:main-adjoint}
Let
$R_r:=(\Lcal_r^*)^{-1}$,
$\lambda_{\rm red}:=R_rq_{\rm red}$,
$\psi_r:=R_r\chi$,
$A(r):=\langle\sigma,\lambda_{\rm red}\rangle$,
and
$B(r):=\langle\sigma,\psi_r\rangle$.
\begin{lemma}[Admissibility of the loop functional]
\label{lem:main-loop-admissibility}
Let
$\displaystyle 
X_*:=
\{\lambda\in W^{1,1}(\Om):\lambda(l_m)=0\}$.
For every \(f\in L^1(\Om)\),
$R_rf\in X_*$
and
$\displaystyle 
\norm{R_rf}_{L^\infty(\Om)}
\le
g_M^{-1}\norm{f}_{L^1(\Om)}$.
Moreover,
\[
\norm{(R_rf)'}_{L^1(\Om)}
\le
g_M^{-1}
\left[
(r+C_M+u_{\max})(l_m-l_0)g_M^{-1}+1
\right]
\norm{f}_{L^1(\Om)}.
\]
The functional $\displaystyle 
f\mapsto \langle\sigma,R_rf\rangle$ is bounded on \(L^1(\Om)\).
\end{lemma}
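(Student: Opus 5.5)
The plan is to work directly from the explicit kernel formula for $R_r$. Fix $f\in L^1(\Om)$ and write
\[
(R_rf)(l)=\int_l^{l_m}\frac{f(s)}{g^*(s)}\,P_r(l,s)\,ds,
\qquad
P_r(l,s):=\exp\!\left[-\int_l^s\frac{r+\mu^*(\xi)+u(\xi)}{g^*(\xi)}\,d\xi\right].
\]
Since $r\ge0$ and $\mu^*+u\ge0$, we have $0<P_r(l,s)\le1$ for $s\ge l$. Together with $g^*\ge g_M$ from \eqref{eq:model-basic-bounds}, at level $E^*\in[0,M_T]$, this gives $\abs{(R_rf)(l)}\le g_M^{-1}\int_l^{l_m}\abs{f}\,ds\le g_M^{-1}\norm{f}_{L^1(\Om)}$, which is the $L^\infty$ bound. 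Absolute continuity follows because $l\mapsto R_rf(l)$ can be written as $e^{\Gamma(l)}\int_l^{l_m}h(s)\,ds$. Here $\Gamma(l):=\int_{l_0}^l (r+\mu^*+u)/g^*$ is Lipschitz, and $h(s):=f(s)e^{-\Gamma(s)}/g^*(s)\in L^1$. A product of an absolutely continuous function with a Lipschitz one is absolutely continuous, so $R_rf\in W^{1,1}(\Om)$. Moreover $R_rf(l_m)=0$ because the integral is empty there. Hence $R_rf\in X_*$.

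For the derivative bound I will differentiate this representation a.e., or equivalently use the ODE $\Lcal_r^*\lambda=f$ directly:
\[
(R_rf)'=\frac{(r+\mu^*+u)}{g^*}\,R_rf-\frac{f}{g^*}.
\]
Taking $L^1$ norms, I bound the first term by $g_M^{-1}(r+C_M+u_{\max})\,(l_m-l_0)\,\norm{R_rf}_{L^\infty}$ and then insert the $L^\infty$ bound. The second term is at most $g_M^{-1}\norm{f}_{L^1}$. Summing the two gives exactly the stated constant $g_M^{-1}\big[(r+C_M+u_{\max})(l_m-l_0)g_M^{-1}+1\big]$. The only point needing care is that $\mu^*\le C_M$ by \eqref{eq:model-ass-first}, and that the ODE holds a.e., which is justified by the absolute-continuity argument above.

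Finally, for boundedness of $f\mapsto\langle\sigma,R_rf\rangle$ I will use the weak form of the co-load. For $\lambda\in X_*$,
\[
\langle\sigma,\lambda\rangle=\int_{l_0}^{l_m}x^*\big[(\partial_Eg)(E^*,\cdot)\lambda'-(\partial_E\mu)(E^*,\cdot)\lambda\big]\,dl .
\]
This is bounded by $C_M\norm{x^*}_{L^\infty}\big(\norm{\lambda'}_{L^1}+(l_m-l_0)\norm{\lambda}_{L^\infty}\big)$. The required fact is $x^*\in L^\infty$, which follows from the explicit profile \eqref{eq:model-stationary-profile}: $0\le x^*\le p/g_M$. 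With $\lambda=R_rf$, the two previous estimates then give $\abs{\langle\sigma,R_rf\rangle}\le C\norm{f}_{L^1}$. I also need to check that this integral formula is consistent with the distributional definition $\sigma=-a'-m-a_0\delta_{l_0}$. Integration by parts of $\int a\lambda'$ produces the boundary terms $a(l_m)\lambda(l_m)-a(l_0)\lambda(l_0)$. The first vanishes since $\lambda(l_m)=0$, and the second is matched by the Dirac term. This pairing is meaningful because $\lambda\in W^{1,1}\subset C(\Om)$ and $a\in W^{1,\infty}$.

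I expect no serious obstacle; the mildest point is the regularity bookkeeping showing $R_rf\in W^{1,1}$ with a well-defined point value at $l_0$, so that the Dirac part of $\sigma$ makes sense on $X_*$.
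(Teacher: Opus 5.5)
Your proposal is correct and follows the paper's proof essentially step by step: the explicit kernel with exponential factor in $(0,1]$ and $g^*\ge g_M$ for the $L^\infty$ bound, the ODE $\Lcal_r^*\lambda=f$ for the derivative bound with exactly the stated constant, and the weak-form co-load pairing bounded by $C_M\norm{x^*}_{L^\infty}$. Your absolute-continuity argument and your check that the Dirac term $-a_0\delta_{l_0}$ cancels the boundary term from integrating $-a'\lambda$ by parts are correct details that the paper leaves implicit.
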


The feedback dependence of \(g\) and \(\mu\) enters the stationary adjoint equation
only through the scalar variation of the output \(E=\langle\chi,x\rangle\).
Consequently, the full adjoint equation is a rank-one perturbation of the frozen
adjoint equation. This yields an explicit Sherman--Morrison-type formula. The
same denominator \(1-B(r)\) that appears in the adjoint correction is the
discounted version of the zero-frequency feedback margin.

\begin{theorem}[Rank-one adjoint formula]
\label{thm:main-rankone}
Let
\[
\mathcal A_r:=\mathcal L_r^*-\chi\langle\sigma,\cdot\rangle,
\qquad
R_r:=(\mathcal L_r^*)^{-1},
\qquad
\psi_r:=R_r\chi .
\]
For \(q\in L^1(\Omega)\), set
\[
\lambda_q:=R_rq,
\qquad
B(r):=\langle\sigma,\psi_r\rangle .
\]
If $1-B(r)\ne0$,
then the equation
\[
\mathcal A_r\lambda=q,
\qquad
\lambda(l_m)=0,
\]
has the unique solution
\[
\lambda
=
\lambda_q
+
\frac{\langle\sigma,\lambda_q\rangle}
     {1-B(r)}
\psi_r .
\]
If \(1-B(r)=0\), then
$\psi_r\in\ker \mathcal A_r$.
Moreover, solvability in the singular case requires the compatibility condition
\[
\langle\sigma,R_rq\rangle=0,
\]
and solutions, when they exist, are nonunique modulo
\(\operatorname{span}\{\psi_r\}\).
\end{theorem}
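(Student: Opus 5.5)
The plan is to treat $\mathcal A_r$ as a rank-one perturbation of the invertible operator $\mathcal L_r^*$ and run the Sherman--Morrison argument at the level of equations. This avoids any abstract inverse formula.

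First, fix the functional setting. By Lemma~\ref{lem:main-loop-admissibility}, $R_r$ maps $L^1(\Omega)$ into $X_*$, and $f\mapsto\langle\sigma,R_rf\rangle$ is bounded. So $\lambda_q=R_rq$ and $\psi_r=R_r\chi$ lie in $X_*$, and $A:=\langle\sigma,\lambda_q\rangle$ and $B(r)=\langle\sigma,\psi_r\rangle$ are well-defined finite numbers. We interpret $\mathcal A_r\lambda=q$ for $\lambda\in X_*$ with $\mathcal L_r^*\lambda\in L^1$. Injectivity of $\mathcal L_r^*$ on $X_*$ follows because the homogeneous equation $-g^*\lambda'+(r+\mu^*+u)\lambda=0$ with $\lambda(l_m)=0$ is a linear ODE with bounded coefficients (recall $g^*\ge g_M>0$), so its only solution is $\lambda\equiv0$. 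Hence $R_r\mathcal L_r^*\lambda=\lambda$ on this domain.

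Second, carry out the key reduction. Let $\lambda$ be any solution and set $c:=\langle\sigma,\lambda\rangle\in\C$. The equation reads $\mathcal L_r^*\lambda=q+c\chi$. Applying $R_r$ and using linearity gives
\[
\lambda=\lambda_q+c\,\psi_r .
\]
Pairing with $\sigma$ gives the scalar consistency equation $c=A+cB(r)$, that is, $(1-B(r))c=A$.

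Conversely, if $c$ satisfies $(1-B(r))c=A$, then $\lambda:=\lambda_q+c\psi_r$ lies in $X_*$ and satisfies $\mathcal L_r^*\lambda=q+c\chi$. Moreover $\langle\sigma,\lambda\rangle=A+cB(r)=c$, so $\mathcal A_r\lambda=q$. Solutions of the operator equation are therefore in bijection with solutions $c$ of the scalar equation.

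Third, split into cases. If $1-B(r)\neq0$, the scalar equation has the unique root $c=A/(1-B(r))$, which yields existence, uniqueness and the stated formula. If $1-B(r)=0$, then $\mathcal L_r^*\psi_r=\chi$ and $\langle\sigma,\psi_r\rangle=1$, so $\mathcal A_r\psi_r=\chi-\chi=0$; hence $\psi_r\in\ker\mathcal A_r$. The scalar equation becomes $0\cdot c=A$, which is solvable iff $A=\langle\sigma,R_rq\rangle=0$. In that case every $c$ works, so the solution set is exactly $\lambda_q+\operatorname{span}\{\psi_r\}$. This gives the nonuniqueness modulo $\operatorname{span}\{\psi_r\}$; note $\psi_r\neq0$, since $\langle\sigma,\psi_r\rangle=1$.

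The main obstacle is bookkeeping rather than ideas. One must make sure the pairing $\langle\sigma,\cdot\rangle$, which contains the Dirac term $-a_0\delta_{l_0}$ and a derivative of $a$, is applied only to elements of $X_*\subset W^{1,1}\subset C(\Omega)$. There it is given by the integral formula $\int x^*[(\partial_Eg)\lambda'-(\partial_E\mu)\lambda]$, which is finite by Lemma~\ref{lem:main-loop-admissibility}. One must also make sure that the identity $R_r\mathcal L_r^*=\mathrm{Id}$ is justified on the solution class. Both points are settled by the ODE uniqueness argument and the admissibility lemma above.
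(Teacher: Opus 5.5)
Your proposal is correct and follows essentially the same route as the paper. Both rewrite $\mathcal A_r\lambda=q$ as $\mathcal L_r^*\lambda=q+c\chi$ with $c=\langle\sigma,\lambda\rangle$, apply $R_r$ to get $\lambda=\lambda_q+c\psi_r$, pair with $\sigma$ to obtain $(1-B(r))c=\langle\sigma,\lambda_q\rangle$, and split into the cases $1-B(r)\ne0$ and $1-B(r)=0$. Your bijection between solutions and roots $c$ merges the paper's separate uniqueness and verification steps. Your explicit justification of $R_r\mathcal L_r^*=\mathrm{Id}$ on $X_*$, the sufficiency of the compatibility condition, and $\psi_r\neq0$ fills in points the paper leaves implicit.
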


\begin{corollary}[Stationary harvesting adjoint and switching function]
\label{cor:main-switching-formula}
For \(q=q_{\rm red}=\pi u\), if \(1-B(r)\ne0\), then
\[
\lambda
=
\lambda_{\rm red}
+
\Gamma_r\psi_r,
\qquad
\Gamma_r:=
\frac{A(r)}{1-B(r)}.
\]
The switching function is
\[
S(l)
=
S_{\rm red}(l)-\Gamma_r\psi_r(l),
\qquad
S_{\rm red}(l):=\pi(l)-\lambda_{\rm red}(l).
\]
Thus
\[
u(l)=u_{\max}\qquad\text{on }\{S>0\},
\qquad
u(l)=0\qquad\text{on }\{S<0\}.
\]
\end{corollary}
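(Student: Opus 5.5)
This corollary follows by specializing Theorem~\ref{thm:main-rankone} to the harvesting payoff and substituting into the definition of the switching function. The plan has three short steps.

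First, the payoff must be an admissible right-hand side. Since $\pi\in L^\infty_+(\Om)$ and $u\in\Uad_{\rm stat}\subset L^\infty(\Om)$, we have $q_{\rm red}=\pi u\in L^\infty(\Om)\subset L^1(\Om)$. Lemma~\ref{lem:main-loop-admissibility} then gives $\lambda_{\rm red}=R_rq_{\rm red}\in X_*$ and $\psi_r=R_r\chi\in X_*$, the latter because $\chi\in L^\infty\subset L^1$. It also shows that $A(r)=\langle\sigma,\lambda_{\rm red}\rangle$ and $B(r)=\langle\sigma,\psi_r\rangle$ are well-defined finite real numbers. The pairing with the distribution $\sigma$ is understood through the equivalent integral form
\[
\langle\sigma,\lambda\rangle=\int_{l_0}^{l_m}x^*\bigl[(\partial_Eg)(E^*,\cdot)\lambda'-(\partial_E\mu)(E^*,\cdot)\lambda\bigr]\,dl,
\]
which is finite for $\lambda\in W^{1,1}$ because $x^*\in L^\infty$.

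Second, apply the theorem. Under $1-B(r)\ne0$, the nonsingular case of Theorem~\ref{thm:main-rankone} with $q=q_{\rm red}$ gives the unique solution $\lambda=\lambda_{q}+\frac{\langle\sigma,\lambda_q\rangle}{1-B(r)}\psi_r$. Here $\lambda_q=\lambda_{\rm red}$ and $\langle\sigma,\lambda_q\rangle=A(r)$, so $\lambda=\lambda_{\rm red}+\Gamma_r\psi_r$.

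Third, substitute into the switching function. With $S=\pi-\lambda$,
\[
S=\pi-\lambda_{\rm red}-\Gamma_r\psi_r=S_{\rm red}-\Gamma_r\psi_r.
\]
The bang-bang characterization on $\{S>0\}$ and $\{S<0\}$ is the nonsingular-arc rule stated in Section~\ref{subsec:model-control}. Since $S$ is now explicit, that rule transfers directly.

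The only point needing care is the first step. One must make sure the pairing $\langle\sigma,\cdot\rangle$, including the Dirac correction $-a_0\delta_{l_0}$, is evaluated consistently on $W^{1,1}$ functions, which are continuous. The boundary term $-a_0\lambda(l_0)$ from the Dirac part and the boundary term produced by integrating $-\frac{d}{dl}a$ by parts cancel, and $\lambda(l_m)=0$ removes the term at $l_m$. This cancellation is what justifies the integral form above, and it is already covered by Lemma~\ref{lem:main-loop-admissibility}.
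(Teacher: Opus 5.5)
Your proposal is correct and takes essentially the paper's route: specialize Theorem~\ref{thm:main-rankone} to $q=\pi u$, identify $\lambda_q=\lambda_{\rm red}$ and $\langle\sigma,\lambda_q\rangle=A(r)$, and substitute into $S=\pi-\lambda$. The differences are minor. You add an explicit admissibility check via Lemma~\ref{lem:main-loop-admissibility} and cite the nonsingular-arc rule of Section~\ref{subsec:model-control}, whereas the paper re-justifies the bang--bang rule by noting that the Hamiltonian is affine in $u$ with coefficient $S(l)$ and maximizing pointwise over $[0,u_{\max}]$.
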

\subsection{Forward--adjoint identity and switching geometry}
\label{subsec:main-identity-switching}
\begin{theorem}[Forward--adjoint feedback identity]
\label{thm:main-identity}
Let \(r=0\). Let
$\psi_0:=R_0\chi$ and
$B(0):=\langle\sigma,\psi_0\rangle$.
Then
$\displaystyle 
B(0)=\Phi'(E^*)$.
Consequently,
\[
\Ev(0)=\Phi'(E^*)=B(0),
\qquad
1-\Ev(0)=1-\Phi'(E^*)=1-B(0).
\]
\end{theorem}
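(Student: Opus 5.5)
The plan is a duality argument: express $\Phi'(E^*)$ through the derivative of the stationary profile with respect to $E$, and then move the operator onto $\psi_0=R_0\chi$.

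\textbf{Step 1: the sensitivity equation.} Differentiate the stationary frozen problem with respect to $E$ at $E^*$, using the explicit profile \eqref{eq:model-stationary-profile} and the $C^1$ hypotheses of Proposition~\ref{prop:main-decomp}. Set $w:=\partial_E x_E|_{E=E^*}$. Differentiating $\frac{d}{dl}(g(E,l)x_E)=-(\mu(E,l)+u)x_E$ and $g(E,l_0)x_E(l_0)=p$ gives
\[
\frac{d}{dl}\big(g^*w\big)+(\mu^*+u)w=-\frac{d}{dl}a-m=\sigma_{\rm int},
\qquad
g^*(l_0)w(l_0)=-a_0 ,
\]
with $a,m,a_0$ as defined in Section~\ref{subsec:model-control}. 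Then $\Phi'(E^*)=\langle\chi,w\rangle$. This is the $\lambda=0$ case of Definition~\ref{def:main-characteristic}, so $w=\varphi_1^0$, which recovers $\Ev(0)=\Phi'(E^*)$ from Lemma~\ref{lem:main-Ev-props}.

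\textbf{Step 2: duality.} Since $\psi_0=R_0\chi$, we have $-g^*\psi_0'+(\mu^*+u)\psi_0=\chi$ with $\psi_0(l_m)=0$, and $\psi_0\in X_*\subset W^{1,1}$ by Lemma~\ref{lem:main-loop-admissibility}. Write
\[
\langle\chi,w\rangle=\int_{l_0}^{l_m}w\big(-g^*\psi_0'+(\mu^*+u)\psi_0\big)\,dl
\]
and integrate the term $-\int g^*w\,\psi_0'$ by parts. Here $g^*w\in W^{1,\infty}$, since $a\in W^{1,\infty}$ by Assumption~\ref{ass:model-coefficients}. The boundary terms are $-[g^*w\psi_0]_{l_0}^{l_m}$. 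The $l_m$ term vanishes because $\psi_0(l_m)=0$, and the $l_0$ term equals $g^*(l_0)w(l_0)\psi_0(l_0)=-a_0\psi_0(l_0)$. Collecting terms gives
\[
\langle\chi,w\rangle=\int_{l_0}^{l_m}\Big[\tfrac{d}{dl}(g^*w)+(\mu^*+u)w\Big]\psi_0\,dl-a_0\psi_0(l_0)
=\int_{l_0}^{l_m}\big(-a'-m\big)\psi_0\,dl-a_0\psi_0(l_0).
\]
By the definition $\sigma=-\frac{d}{dl}a-m-a_0\delta_{l_0}$, the right-hand side is exactly $\langle\sigma,\psi_0\rangle=B(0)$.

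As a consistency check, integrate $\int(-a')\psi_0$ by parts once more. This gives $\int a\psi_0'\,dl+a_0\psi_0(l_0)$, using $\psi_0(l_m)=0$. The term $a_0\psi_0(l_0)$ cancels the Dirac correction, leaving the equivalent form $\int x^*[(\partial_Eg)\psi_0'-(\partial_E\mu)\psi_0]\,dl$ stated in Section~\ref{subsec:model-control}. I will present the proof through this form, since it avoids interpreting $\sigma$ as a distribution.

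\textbf{Step 3: conclusion.} Combining with Lemma~\ref{lem:main-Ev-props} gives $\Ev(0)=\Phi'(E^*)=B(0)$, and the statements about the margins $1-\Ev(0)$, $1-\Phi'(E^*)$ and $1-B(0)$ follow immediately.

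\textbf{Main obstacle.} The delicate point is the boundary bookkeeping at $l_0$. One must check that the flux-boundary derivative $g^*(l_0)w(l_0)=-a_0$ produces a boundary term that is exactly absorbed by the Dirac part $-a_0\delta_{l_0}$ of $\sigma$, with the correct sign. Beyond that, the integration by parts needs justification at low regularity, namely a product of $W^{1,\infty}$ and $W^{1,1}$ functions on a compact interval, which is standard. Differentiability of $E\mapsto x_E$ in $L^1$ needs no separate argument, since it follows from the explicit formula.
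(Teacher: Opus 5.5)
Your proposal is correct and follows the paper's proof essentially step for step. The paper derives the same sensitivity problem $\frac{d}{dl}(g^*y)+(\mu^*+u)y=\sigma_{\rm int}$, $g^*(l_0)y(l_0)=-a_0$, and integrates the Green identity $(\Lcal_0 y)\psi_0-y(\Lcal_0^*\psi_0)=(g^*y\psi_0)'$ to obtain $\int\sigma_{\rm int}\psi_0\,dl=\Phi'(E^*)+a_0\psi_0(l_0)$. The Dirac part $-a_0\delta_{l_0}$ of $\sigma$ then cancels the boundary term, and $\Ev(0)=\Phi'(E^*)$ is taken from Lemma~\ref{lem:main-Ev-props}. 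Your boundary signs are right, and presenting the result through the weak form $\int x^*[(\partial_Eg)\psi_0'-(\partial_E\mu)\psi_0]\,dl$ is only a cosmetic variant.
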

\begin{corollary}[Discount perturbation]
\label{cor:main-discount}
Let
$\displaystyle 
B(r):=\langle\sigma,R_r\chi\rangle$.
Then, on compact stationary sets, as $r\downarrow0$,
$\displaystyle 
B(r)=B(0)+\mathcal O(r) = \Phi'(E^*)+\mathcal O(r)$.
\end{corollary}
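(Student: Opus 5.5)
The plan is to compare $B(r)=\langle\sigma,R_r\chi\rangle$ with $B(0)=\langle\sigma,R_0\chi\rangle$ through a resolvent identity. The key observation is that the discount enters the frozen adjoint only as a scalar shift of the zeroth-order coefficient:
\[
\Lcal_r^*=\Lcal_0^*+r\,I
\]
on the common domain $X_*=\{\lambda\in W^{1,1}(\Om):\lambda(l_m)=0\}$. The identification $B(0)=\Phi'(E^*)$ is then imported directly from Theorem~\ref{thm:main-identity}.

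\emph{Step 1 (resolvent identity).} For $f\in L^1(\Om)$, Lemma~\ref{lem:main-loop-admissibility} gives $R_rf\in X_*$ and $\Lcal_r^*R_rf=f$, hence $\Lcal_0^*R_rf=f-rR_rf$. The problem $\Lcal_0^*\lambda=h$, $\lambda(l_m)=0$, is a first-order linear ODE with terminal condition. It is uniquely solvable in $X_*$ because $g^*\ge g_M>0$, and its solution is $R_0h$. Applying $R_0$ therefore yields
\[
R_rf=R_0f-r\,R_0R_rf .
\]
With $f=\chi$ this gives $\psi_r-\psi_0=-r\,R_0\psi_r$, and hence
\[
B(r)-B(0)=-r\,\langle\sigma,R_0\psi_r\rangle .
\]

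\emph{Step 2 (uniform bounds).} From the representation
\[
\langle\sigma,\lambda\rangle=\int_{l_0}^{l_m}x^*\big[(\partial_Eg)(E^*,\cdot)\lambda'-(\partial_E\mu)(E^*,\cdot)\lambda\big]\,dl
\]
and \eqref{eq:model-ass-first}, we get
\[
|\langle\sigma,\lambda\rangle|\le C_M\norm{x^*}_{L^\infty(\Om)}\big(\norm{\lambda'}_{L^1(\Om)}+\norm{\lambda}_{L^1(\Om)}\big).
\]
Here $\norm{x^*}_{L^\infty}\le p/g_M$ by \eqref{eq:model-stationary-profile}. Lemma~\ref{lem:main-loop-admissibility} with $r=0$ bounds $\norm{R_0h}_{W^{1,1}}$ by a constant times $\norm{h}_{L^1}$, with a constant depending only on $g_M,C_M,u_{\max},l_m-l_0$. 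It also gives
\[
\norm{\psi_r}_{L^1}\le (l_m-l_0)\norm{\psi_r}_{L^\infty}\le (l_m-l_0)g_M^{-1}\norm{\chi}_{L^1},
\]
uniformly in $r\ge0$. Combining these estimates gives $|B(r)-B(0)|\le Cr$, where $C$ depends only on $p,g_M,C_M,u_{\max},\norm{\chi}_{L^1},l_m-l_0$.

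\emph{Step 3 (compact stationary sets, and conclusion).} The constant $C$ above depends on the equilibrium only through the box constants $g_M$ and $C_M$. These are uniform as long as $E^*$ ranges in a compact set such as $[0,\bar M]$ and $u$ ranges in $\Uad_{\rm stat}$. Hence the $\mathcal O(r)$ is uniform on compact stationary sets. Finally, Theorem~\ref{thm:main-identity} gives $B(0)=\Phi'(E^*)$, which completes the chain $B(r)=B(0)+\mathcal O(r)=\Phi'(E^*)+\mathcal O(r)$.

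The only delicate point is Step 1. It requires justifying that $\sigma$, which contains the boundary Dirac term at $l_0$, is evaluated only on $W^{1,1}$ functions vanishing at $l_m$. It also requires that $R_0$ genuinely inverts $\Lcal_0^*$ on $X_*$, which rests on uniqueness for the terminal-value ODE. Both facts are supplied by Lemma~\ref{lem:main-loop-admissibility} and by the integrated form of $\langle\sigma,\cdot\rangle$. If one prefers to avoid the operator identity, an alternative is to differentiate the explicit kernel of $R_r$ in $r$. The factor $\int_l^s d\xi/g^*\le t_\sharp$ then gives $|\partial_r(R_r\chi)|\le t_\sharp R_r|\chi|$, and a similar bound holds for the derivative in $l$. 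This yields the same Lipschitz estimate in $r$ directly.
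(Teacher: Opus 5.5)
Your proof is correct, but it estimates $\psi_r-\psi_0$ by a different route from the paper.

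\textbf{The paper's route.} The paper works at the level of the explicit kernel. It writes $\psi_r-\psi_0$ with the factor $e^{-rT(l,s)}-1$ and bounds it in $L^\infty$ using $\abs{e^{-rT}-1}\le rT(l,s)\le r(l_m-l_0)/g_M$. It then recovers the derivative from the subtracted equation $-g^*(\psi_r-\psi_0)'+(\mu^*+u)(\psi_r-\psi_0)=-r\psi_r$ and concludes with boundedness of $\sigma$ on $W^{1,1}$.

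\textbf{Your route.} You start from that same subtracted equation, together with the terminal condition at $l_m$, and invert it: $\psi_r-\psi_0=-rR_0\psi_r$. This folds both the $W^{1,1}$ estimate and the bound on $\sigma$ into the already-proved boundedness of $h\mapsto\langle\sigma,R_0h\rangle$ on $L^1$ from Lemma~\ref{lem:main-loop-admissibility}.

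\textbf{Comparison.} Your argument is shorter and needs no new estimates. It yields the explicit bound $\abs{B(r)-B(0)}\le C_{\sigma,0}(l_m-l_0)g_M^{-1}\norm{\chi}_{L^1}\,r$, with $C_{\sigma,0}$ the constant from the proof of that lemma. It also gives the exact first-order coefficient: the same identity shows $\psi_r\to\psi_0$ in $L^1$, so $B'(0)=-\langle\sigma,R_0^2\chi\rangle$. The paper's kernel computation is more hands-on and ties the constant directly to the transit-time factor $T(l,s)$. As written, however, it only controls the size of the correction, not its value.

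Your reading of ``compact stationary sets'' is reasonable, and the paper leaves that point implicit. You take it to mean uniformity of $g_M$, $C_M$, and $\norm{x^*}_{L^\infty}\le p/g_M$ over the range of $E^*$ and $u$.
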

\begin{proposition}[No exterior switching points]
\label{prop:main-no-exterior}
Assume
$S_{\rm red}\in C(\Om)$,
$\psi_r\in C(\Om)$,
and \(S_{\rm red}\) has a unique zero \(l_{\rm red}\in(l_0,l_m)\). For
$\displaystyle 
\mathcal N_\delta:=(l_{\rm red}-\delta,l_{\rm red}+\delta)\cap\Om$,
define
$\displaystyle 
m_{\rm out}(\delta):=
\inf_{\Om\setminus\mathcal N_\delta}\abs{S_{\rm red}(l)}$.
If
$\displaystyle 
\abs{\Gamma_r}\norm{\psi_r}_{L^\infty(\Om)}
<
m_{\rm out}(\delta)$,
then for all $l\in\Om\setminus\mathcal N_\delta$,
$S(l)\ne0$.
\end{proposition}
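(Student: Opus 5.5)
The claim follows from a triangle inequality on the decomposition in Corollary~\ref{cor:main-switching-formula}. With $1-B(r)\ne0$, that corollary gives
\[
S(l)=S_{\rm red}(l)-\Gamma_r\psi_r(l)
\]
pointwise on $\Om$. Both terms on the right are continuous by hypothesis, so $S$ is continuous. In particular, evaluating $S$ at individual points is meaningful; we are not working only a.e.

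Fix $l\in\Om\setminus\mathcal N_\delta$. By the triangle inequality,
\[
\abs{S(l)}\ge\abs{S_{\rm red}(l)}-\abs{\Gamma_r}\,\abs{\psi_r(l)}\ge m_{\rm out}(\delta)-\abs{\Gamma_r}\norm{\psi_r}_{L^\infty(\Om)}.
\]
The second inequality uses two facts. First, $\abs{S_{\rm red}(l)}\ge m_{\rm out}(\delta)$ holds by the definition of the infimum over $\Om\setminus\mathcal N_\delta$. Second, because $\psi_r$ is continuous, its essential supremum coincides with its pointwise supremum, so $\abs{\psi_r(l)}\le\norm{\psi_r}_{L^\infty(\Om)}$ holds at every point and not merely a.e. Lemma~\ref{lem:main-loop-admissibility} already bounds $\norm{\psi_r}_{L^\infty}$ by $g_M^{-1}\norm{\chi}_{L^1}$, but the finiteness of this norm is all we need here. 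The assumed strict inequality then makes the right-hand side strictly positive, so $S(l)\ne0$.

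The only point needing care is whether the hypothesis is vacuous, which would happen if $m_{\rm out}(\delta)=0$. I would check that it is strictly positive. The set $\Om\setminus\mathcal N_\delta$ is closed and bounded, hence compact. It is nonempty for small $\delta$; if it is empty, the claim holds trivially. The function $S_{\rm red}$ is continuous and its unique zero $l_{\rm red}$ lies in $\mathcal N_\delta$, so $S_{\rm red}$ has no zero on this compact set. Its modulus therefore attains a positive minimum there, and $m_{\rm out}(\delta)>0$. The statement assumes the strict inequality outright, so it does not rely on this positivity argument, and the proof is otherwise immediate.
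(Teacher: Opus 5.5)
Your proof is correct and is the same argument as the paper's: apply the reverse triangle inequality to $S=S_{\rm red}-\Gamma_r\psi_r$ on $\Om\setminus\mathcal N_\delta$, using $\abs{S_{\rm red}(l)}\ge m_{\rm out}(\delta)$ there and $\abs{\psi_r(l)}\le\norm{\psi_r}_{L^\infty(\Om)}$. Your side remarks are accurate additions that the paper leaves implicit: continuity lets the sup bound hold at every point, and compactness gives $m_{\rm out}(\delta)>0$.
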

\begin{proposition}[Persistence of a single switching threshold]
\label{prop:main-single-threshold}
Assume
$\displaystyle 
S_{\rm red},\psi_r\in W^{1,\infty}(\Om)$,
\(S_{\rm red}\) has a unique simple zero \(l_{\rm red}\in(l_0,l_m)\), and
$\displaystyle 
S_{\rm red}'(l_{\rm red})\ne0$.
Let
$\displaystyle 
m_1(\delta):=
\inf_{\mathcal N_\delta}\abs{S_{\rm red}'(l)}$.
If
\[
\abs{\Gamma_r}\norm{\psi_r}_{L^\infty(\Om)}
<
m_{\rm out}(\delta),
\qquad
\abs{\Gamma_r}\norm{\psi_r'}_{L^\infty(\Om)}
<
m_1(\delta),
\]
then \(S\) has at most one zero in \(\Om\). If, in addition,
$\displaystyle 
S(l_0)S(l_m)<0$,
then \(S\) has exactly one zero \(l_*\in\mathcal N_\delta\), and
$\displaystyle 
u(l)=u_{\max}\mathbf 1_{\{S(l)>0\}}$
is a single-threshold policy.
\end{proposition}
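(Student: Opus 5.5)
We will prove the first assertion by contradiction and a derivative/monotonicity argument, then upgrade it to existence and uniqueness using the intermediate value theorem. Throughout, write $\Gamma:=\Gamma_r$ and $S=S_{\rm red}-\Gamma\psi_r$, as in Corollary~\ref{cor:main-switching-formula}.

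\emph{Step 1 (zeros are confined to $\mathcal N_\delta$).} The first smallness hypothesis is exactly the hypothesis of Proposition~\ref{prop:main-no-exterior}. That proposition therefore gives $S(l)\neq0$ for all $l\in\Om\setminus\mathcal N_\delta$, so every zero of $S$ lies in $\mathcal N_\delta$.

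\emph{Step 2 (strict monotonicity on $\mathcal N_\delta$).} Since $S_{\rm red},\psi_r\in W^{1,\infty}(\Om)$, the function $S$ is Lipschitz and $S'=S_{\rm red}'-\Gamma\psi_r'$ a.e. We need $m_1(\delta)>0$, which the hypothesis implicitly assumes; otherwise the strict inequality $|\Gamma|\|\psi_r'\|_\infty<m_1(\delta)$ cannot hold. Then $|S_{\rm red}'|\ge m_1(\delta)>0$ a.e. on the interval $\mathcal N_\delta$.

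The main subtlety is that $S_{\rm red}'$ keeps a constant sign a.e. on $\mathcal N_\delta$. If $S_{\rm red}'$ is continuous this follows from connectedness. For a general $W^{1,\infty}$ function I would argue as follows. $S_{\rm red}$ vanishes at $l_{\rm red}$ and nowhere else. By Step~1's assumption, $|S_{\rm red}|\ge m_{\rm out}(\delta)>0$ off $\mathcal N_\delta$, and the simple-zero hypothesis fixes the sign change at $l_{\rm red}$. Together these should pin the orientation.

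If this argument falls short, I would instead read the hypothesis as implicitly requiring a sign-definite derivative near a simple zero, i.e.\ take $\mathcal N_\delta$ small enough that $S_{\rm red}'$ has a fixed sign there. This is the natural reading of $S_{\rm red}'(l_{\rm red})\ne0$ with $S_{\rm red}'$ continuous near $l_{\rm red}$.

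With the sign fixed, say $S_{\rm red}'\ge m_1(\delta)$ a.e., we get
\[
S'\ge m_1(\delta)-|\Gamma|\,\|\psi_r'\|_{L^\infty(\Om)}>0\quad\text{a.e. on }\mathcal N_\delta .
\]
Integrating shows that $S$ is strictly monotone on $\mathcal N_\delta$. Hence $S$ has at most one zero in $\mathcal N_\delta$, and by Step~1 at most one zero in $\Om$.

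\emph{Step 3 (existence and policy form).} If $S(l_0)S(l_m)<0$, then continuity of $S$ and the intermediate value theorem give a zero $l_*\in(l_0,l_m)$. By Step~1, $l_*\in\mathcal N_\delta$, and by Step~2 it is unique. Consequently $S$ has a constant sign on each of $[l_0,l_*)$ and $(l_*,l_m]$, and these signs are opposite. The bang-bang rule of Corollary~\ref{cor:main-switching-formula} then yields $u=u_{\max}\mathbf 1_{\{S>0\}}$ a.e., since $\{S=0\}=\{l_*\}$ has measure zero. This is a single-threshold policy.

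The expected obstacle is the sign-constancy of $S_{\rm red}'$ on $\mathcal N_\delta$ in the merely Lipschitz setting. I would handle it as described in Step~2: use continuity of $S_{\rm red}'$ near the simple zero, or shrink $\delta$ so that the sign is fixed.
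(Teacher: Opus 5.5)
Your three steps follow the paper's proof: zeros confined to $\mathcal N_\delta$ via Proposition~\ref{prop:main-no-exterior}, the bound $\abs{S'}\ge m_1(\delta)-\abs{\Gamma_r}\norm{\psi_r'}_{L^\infty(\Om)}>0$ on $\mathcal N_\delta$, constant sign and hence strict monotonicity, then the intermediate value theorem. The obstacle you single out is genuine. The paper does not resolve it either; it just asserts that $S'$ is continuous, which does not follow from $S_{\rm red},\psi_r\in W^{1,\infty}(\Om)$.

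Neither of your patches closes the gap. The orientation argument cannot work, because at $W^{1,\infty}$ regularity the statement is false. Put $s:=l-l_{\rm red}$, $\delta=1$, and $\Om\supset\{\abs{s}\le1\}$. Let $S_{\rm red}=s$ for $s\le\frac12$, $S_{\rm red}=1-s$ for $\frac12\le s\le\frac35$, and $S_{\rm red}=s-\frac15$ for $s\ge\frac35$. Let $\Gamma_r\psi_r\equiv\frac{9}{20}$ on $\{s\le1\}$, tapered to $0$ at $l_m$ with slope below $1$ if you want $\psi_r(l_m)=0$. Then all hypotheses hold:
\begin{itemize}
\item $l_{\rm red}$ is the unique, simple zero of $S_{\rm red}$;
\item $\abs{S_{\rm red}'}=1$ a.e., so $m_1(1)=1>\abs{\Gamma_r}\norm{\psi_r'}_{L^\infty(\Om)}$;
\item $m_{\rm out}(1)=\frac45>\frac{9}{20}$;
\item $S(l_0)<0<S(l_m)$.
\end{itemize}
Yet $S=S_{\rm red}-\frac{9}{20}$ vanishes at $s=\frac{9}{20},\frac{11}{20},\frac{13}{20}$. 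So the sign information on $S_{\rm red}$ (unique zero, simple, bounded away from $0$ off $\mathcal N_\delta$) does not pin the sign of $S_{\rm red}'$ a.e. Your fallback of shrinking $\delta$ is also not admissible. The $\delta$ is the one fixed in the hypotheses, and decreasing it decreases $m_{\rm out}(\delta)$, so the first smallness condition can be lost.

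The correct repair is an added regularity (or sign) hypothesis, which is what the paper's ``$S'$ is continuous'' tacitly supplies. Suppose $S_{\rm red}'$ is continuous on $\mathcal N_\delta$, e.g.\ $S_{\rm red}\in C^1$ there. Then $\abs{S_{\rm red}'}\ge m_1(\delta)>0$ on the connected set $\mathcal N_\delta$ forces a fixed sign $\sigma\in\{\pm1\}$ on all of it, with no shrinking needed. Your a.e.\ estimate $\sigma S'\ge m_1(\delta)-\abs{\Gamma_r}\norm{\psi_r'}_{L^\infty(\Om)}>0$, integrated, then gives strict monotonicity with $\psi_r$ only $W^{1,\infty}$. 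This is slightly weaker than the paper's tacit assumption, since it needs no continuity of $\psi_r'$.

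Alternatively, redefine $m_1(\delta)$ as the one-signed bound $\operatorname*{ess\,inf}_{\mathcal N_\delta}\sigma S_{\rm red}'$ for a fixed $\sigma\in\{\pm1\}$. With either change, your Steps 1 and 3 go through as written.
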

\begin{corollary}[First-order switching displacement]
\label{cor:main-switch-shift}
If \(l_*\) exists and \(S'(l_*)\ne0\), then
$\displaystyle 
l_*=
l_{\rm red}
+
\Delta l$,
with
$\displaystyle 
\Delta l
=
\frac{\Gamma_r\psi_r(l_{\rm red})}
     {S_{\rm red}'(l_{\rm red})} + \mathcal O(\Gamma_r^2)$.
\end{corollary}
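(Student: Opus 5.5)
The plan is a Taylor expansion of the switching function around the reduced threshold, followed by an implicit-function argument in the scalar parameter $\Gamma_r$. By Corollary~\ref{cor:main-switch-shift}'s setting, $S=S_{\rm red}-\Gamma_r\psi_r$ (Corollary~\ref{cor:main-switching-formula}). Under the hypotheses of Proposition~\ref{prop:main-single-threshold}, the zero $l_*$ lies in $\mathcal N_\delta$, and $S_{\rm red}(l_{\rm red})=0$.

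\emph{Localization.} First I show that $l_*-l_{\rm red}=\mathcal O(\Gamma_r)$. Evaluating $0=S(l_*)=S_{\rm red}(l_*)-\Gamma_r\psi_r(l_*)$ and applying the mean value theorem gives
\[
S_{\rm red}(l_*)-S_{\rm red}(l_{\rm red})=S_{\rm red}'(\zeta)(l_*-l_{\rm red})
\]
for some $\zeta\in\mathcal N_\delta$. Since $\abs{S_{\rm red}'}\ge m_1(\delta)>0$ on $\mathcal N_\delta$, this yields
\[
\abs{l_*-l_{\rm red}}\le \abs{\Gamma_r}\norm{\psi_r}_{L^\infty}/m_1(\delta).
\]

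\emph{Expansion.} For the first-order term I need $S_{\rm red}\in C^{1,1}$ near $l_{\rm red}$, or at least $C^1$ with a controlled modulus, together with $\psi_r\in W^{1,\infty}$. Both should follow from the representation of $R_r$: $\lambda_{\rm red}=R_rq_{\rm red}$ satisfies $\lambda'=\big((r+\mu^*+u)\lambda-q_{\rm red}\big)/g^*$. Writing $\Delta l=l_*-l_{\rm red}$, I then get
\[
S_{\rm red}(l_*)=S_{\rm red}'(l_{\rm red})\Delta l+\mathcal O(\Delta l^2),
\qquad
\psi_r(l_*)=\psi_r(l_{\rm red})+\mathcal O(\Delta l).
\]
Substituting into $S(l_*)=0$ gives
\[
S_{\rm red}'(l_{\rm red})\Delta l=\Gamma_r\psi_r(l_{\rm red})+\mathcal O(\Gamma_r\Delta l)+\mathcal O(\Delta l^2).
\]
Dividing by $S_{\rm red}'(l_{\rm red})\neq0$ and using $\Delta l=\mathcal O(\Gamma_r)$ from the localization step produces
\[
\Delta l=\frac{\Gamma_r\psi_r(l_{\rm red})}{S_{\rm red}'(l_{\rm red})}+\mathcal O(\Gamma_r^2),
\]
which is the claimed formula.

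An alternative route is the implicit function theorem applied to $F(l,\gamma)=S_{\rm red}(l)-\gamma\psi_r(l)$ at $(l_{\rm red},0)$. This gives $l_*(\gamma)$ with $l_*'(0)=\psi_r(l_{\rm red})/S_{\rm red}'(l_{\rm red})$. The hypothesis $S'(l_*)\ne0$ ensures that this branch is the zero in question.

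\emph{Main obstacle.} The delicate step is the regularity needed for an $\mathcal O(\Delta l^2)$ Taylor remainder, since $\pi$ and $u$ are only $L^\infty$. The issue is that $u$ jumps at the switching point, so $S_{\rm red}'$ may itself be discontinuous at $l_{\rm red}$. I would first try to argue one-sidedly: on each side of $l_{\rm red}$, $u$ is constant, and $\lambda_{\rm red}$ is $W^{2,\infty}$ piecewise given Lipschitz $\pi$. If this fails, I would fall back on interpreting $S_{\rm red}'(l_{\rm red})$ as the hypothesised derivative in the $W^{1,\infty}$ sense with Lipschitz $S_{\rm red}'$ near $l_{\rm red}$, and state the remainder under that assumption.
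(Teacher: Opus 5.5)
Your argument is correct and has the same skeleton as the paper's proof: Taylor-expand $S(l_*)=0$ about $l_{\rm red}$, show $\Delta l=\mathcal O(\Gamma_r)$, and keep first-order terms. The localization step is where you differ.

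The paper assumes $S_{\rm red},\psi_r\in C^2$ near $l_{\rm red}$. It then gets $\Delta l=\mathcal O(\Gamma_r)$ from the implicit function theorem for $F(l,\Gamma)=S_{\rm red}(l)-\Gamma\psi_r(l)$ at $(l_{\rm red},0)$, which is your alternative route. That step tacitly identifies the given zero $l_*$ with the implicit-function branch.

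Your mean-value bound $\abs{\Delta l}\le\abs{\Gamma_r}\norm{\psi_r}_{L^\infty(\Om)}/m_1(\delta)$ instead uses the margins of Proposition~\ref{prop:main-single-threshold} directly, which buys three things:
\begin{itemize}
\item it applies to the actual zero in $\mathcal N_\delta$;
\item it makes the constant in the $\mathcal O(\Gamma_r^2)$ remainder explicit, in terms of $m_1(\delta)$, $\abs{S_{\rm red}'(l_{\rm red})}$, $\norm{\psi_r}_{W^{1,\infty}}$ and the Lipschitz constant of $S_{\rm red}'$;
\item it needs only $C^{1,1}$ regularity of $S_{\rm red}$ and a Lipschitz $\psi_r$ on the segment between $l_{\rm red}$ and $l_*$, rather than $C^2$ on a full neighbourhood.
\end{itemize}
For the same reason, in the implicit-function variant it is this localization, not the hypothesis $S'(l_*)\ne0$, that identifies $l_*$ with the branch.

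One correction to your obstacle paragraph. A bang--bang control $u=u_{\max}\mathbf 1_{\{S>0\}}$ switches at $l_*$, not at $l_{\rm red}$. So the derivative jumps sit at $l_*$: that of $\lambda_{\rm red}$ has size $u_{\max}\abs{S_{\rm red}(l_*)}/g^*(l_*)=\mathcal O(\Gamma_r)$, and $\psi_r'$ jumps there as well. Since $u$ is constant on the open segment between $l_{\rm red}$ and $l_*$, the kink occurs only at an endpoint, and your one-sided mean-value and Taylor argument goes through. This needs $\pi\in W^{2,\infty}$ on that segment. Lipschitz $\pi$ makes $\lambda_{\rm red}$ piecewise $W^{2,\infty}$, but $S_{\rm red}=\pi-\lambda_{\rm red}$ also needs $\pi'$ to be Lipschitz.
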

\begin{proposition}[Window creation by tangency]
\label{prop:main-window}
Additional switching zeros can be created or annihilated only at solutions of
\[
S_{\rm red}(l)=\Gamma_r\psi_r(l),
\qquad
S_{\rm red}'(l)=\Gamma_r\psi_r'(l).
\]
Equivalently,
$\displaystyle 
S_{\rm red}(l)\psi_r'(l)-S_{\rm red}'(l)\psi_r(l)=0$.
\end{proposition}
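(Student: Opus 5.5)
The statement to prove is Proposition~\ref{prop:main-window}: a switching zero can appear or disappear only at a point where $S=S_{\rm red}-\Gamma_r\psi_r$ and $S'$ vanish simultaneously. The proposition is implicitly about a continuous family. I would treat $\Gamma=\Gamma_r$ (or any parameter on which $\Gamma_r$ and $\psi_r$ depend continuously in $W^{1,\infty}\subset C^1$) as varying. I would then study the zero set of
\[
F(l,\Gamma):=S_{\rm red}(l)-\Gamma\psi_r(l),
\qquad S=F(\cdot,\Gamma_r),
\]
which is $C^1$ in $(l,\Gamma)$.

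\textbf{Step 1 (transversal zeros persist).} Suppose $F(l_1,\Gamma_1)=0$ and $\partial_lF(l_1,\Gamma_1)\ne0$. The implicit function theorem gives a unique $C^1$ branch $l(\Gamma)$ near $\Gamma_1$ with $F(l(\Gamma),\Gamma)=0$. It also gives a neighborhood of $l_1$ in which $F(\cdot,\Gamma)$ has exactly one zero for all nearby $\Gamma$, because $\partial_lF$ keeps its sign there, so $F$ is strictly monotone. Hence no zero is created or destroyed near a transversal zero.

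\textbf{Step 2 (no zeros created away from the zero set).} On any compact set where $|F|\ge m>0$ at $\Gamma_1$, continuity in $\Gamma$ keeps $F\ne0$ for nearby $\Gamma$. This is the argument of Proposition~\ref{prop:main-no-exterior}. Boundary points $l_0,l_m$ need a separate remark: a zero may enter through an endpoint only if $F(l_0,\Gamma)=0$ there. I would either treat that as a boundary event or note that the count of interior zeros changes only through tangency or endpoint passage. The statement concerns interior tangency, so I would state this caveat explicitly. A compactness argument over $\Omega$ then shows that the number of zeros is locally constant in $\Gamma$ unless some zero satisfies
\[
S_{\rm red}(l)=\Gamma\psi_r(l),\qquad S_{\rm red}'(l)=\Gamma\psi_r'(l).
\]

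\textbf{Step 3 (eliminate $\Gamma$).} The two equations say that the vectors $(S_{\rm red},S_{\rm red}')(l)$ and $(\psi_r,\psi_r')(l)$ are parallel, with factor $\Gamma$. Hence
\[
S_{\rm red}\psi_r'-S_{\rm red}'\psi_r=0,
\]
the Wronskian condition. Conversely, wherever $\psi_r(l)\ne0$ and the Wronskian vanishes, setting $\Gamma=S_{\rm red}(l)/\psi_r(l)$ recovers both equations. Where $\psi_r(l)=0$, the Wronskian condition reduces to $S_{\rm red}(l)\psi_r'(l)=0$. I would state that equivalence "up to the degenerate set $\psi_r=\psi_r'=0$". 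Note that $\psi_r(l_m)=0$, so this set can include $l_m$.

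\textbf{Main obstacle.} The delicate part is making "created or annihilated" precise under only $W^{1,\infty}$ regularity. With Lipschitz $S'$, the implicit function theorem needs $C^1$ in $l$. I would use Step 1 with the one-sided monotonicity argument: strict sign of $\partial_lF$ on a neighborhood, valid a.e., gives strict monotonicity. That avoids needing continuous derivatives.
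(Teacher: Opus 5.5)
Your argument is correct and follows a different route from the paper's. The paper does not prove that a change in the number of switching zeros forces a double zero. It takes this as its premise: a window is created or annihilated through a non-simple zero $S(l_c)=S'(l_c)=0$ with $l_c\in(l_0,l_m)$. It then eliminates $\Gamma_r$, by division when $\psi_r\psi_r'\ne0$ and by cross-multiplication otherwise. Finally it adds a converse that you do not attempt. If $S''(l_c)\psi_r(l_c)\ne0$, a second-order expansion in $(l,\Gamma)$ gives the fold normal form $(l-l_c)^2=\frac{2\psi_r(l_c)}{S''(l_c)}(\Gamma-\Gamma_c)+o(\abs{\Gamma-\Gamma_c})$, so a nondegenerate tangency really does create or destroy a pair of simple zeros.

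Your Steps 1--2 supply exactly the ``only if'' direction that the paper assumes. They use implicit-function persistence of transversal zeros, continuity away from the zero set, and compactness of $\Om$. Your caveats also sharpen the statement in two ways:
\begin{itemize}
\item The count can change by a zero crossing $l_0$ with no tangency, whenever $S_{\rm red}(l_0)=\Gamma\psi_r(l_0)$. At $l_m$ no transversal crossing is possible, because $\psi_r(l_m)=0$ makes $F(l_m,\Gamma)=S_{\rm red}(l_m)$ independent of $\Gamma$.
\item The Wronskian condition is equivalent to solvability of the pair for \emph{some} $\Gamma$ only off the set $\{\psi_r=\psi_r'=0\}$.
\end{itemize}
The paper's ``equivalently'' means the same thing but glosses over both points.

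One correction to your regularity remarks: $W^{1,\infty}(\Om)\not\subset C^1(\Om)$. Your a.e.-monotonicity fallback also cannot work. When $S_{\rm red}'$ is merely bounded, a nonzero value of $\partial_lF$ at one point says nothing about its essential sign nearby. For example, the local model $F(l,\Gamma)=\abs{l-l_c}-\Gamma$ has two zeros for $\Gamma>0$ and none for $\Gamma<0$, yet there is no classical double zero at $\Gamma=0$. The fix is simply to assume $S_{\rm red}\in C^1$, which the statement already presupposes by evaluating $S_{\rm red}'$ pointwise; the paper's proof even uses $S''$. No assumption is needed for $\psi_r$. From $-g^*\psi_r'+(r+\mu^*+u)\psi_r=\chi$ one gets $\psi_r'=\big((r+\mu^*+u)\psi_r-\chi\big)/g^*$, which is Lipschitz, so $\psi_r\in W^{2,\infty}(\Om)$. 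With this, your Steps 1--2 go through as written, including the variant in which $S_{\rm red}$ and $\psi_r$ also vary continuously in $C^1$ with the parameter.
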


As illustrated in Figure~\ref{fig:fig5}, the rank-one adjoint correction translates the abstract Sherman-Morrison-type formula into a tangible optimal control consequence. Specifically, the figure demonstrates the geometric displacement of bang-bang switching thresholds, highlighting the structural shift of the harvesting zone from the unperturbed threshold $l_{\rm red}$ to the new threshold $l_*$.

\begin{figure}[htbp]
    \centering
    \includegraphics[width=0.8\textwidth]{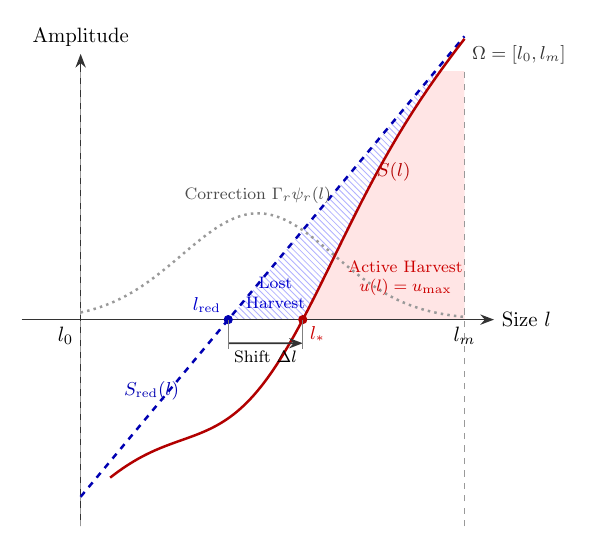}
    \caption{Rank-One Adjoint Correction and Switching Geometry. A plot over the size domain $\Omega$ showing the unperturbed switching function $S_{\rm red}(l)$ crossing zero at $l_{\rm red}$, and the rank-one correction envelope $\Gamma_r \psi_r(l)$. The perturbed switching function $S(l)$ illustrates the shift of the bang-bang threshold from $l_{\rm red}$ to the new $l_*$, highlighting the structurally shifted and lost harvest zones.}
    \label{fig:fig5}
\end{figure}

\subsection{A posteriori diagnostics}
\label{subsec:main-diagnostics}
The scalar quantities
$H(E)=E-\Phi(E)$,
$B(0)-\Phi'(E^*)$,
$1-B(r)$,
and
$S_{\rm red}-\Gamma_r\psi_r$
generate computable diagnostics.
\begin{proposition}[Closure residual bound]
\label{prop:main-closure-residual}
Let
$\Phi\in C^1(I)$,
$I\subset[0,\bar M]$,
and
$\tilde E\in I$,
and set
$\displaystyle 
\rho_0:=\abs{\tilde E-\Phi(\tilde E)}$.
Assume for all $E\in I$,
$H'(E)=1-\Phi'(E)\ge m>0$,
and
$\displaystyle 
\left[
\tilde E-\frac{\rho_0}{m},
\tilde E+\frac{\rho_0}{m}
\right]
\subset I$.
Then \(H(E)=0\) has a unique zero \(E^*\in I\), and
$\displaystyle 
\abs{E^*-\tilde E}
\le
\frac{\rho_0}{m}
=
\frac{\abs{\tilde E-\Phi(\tilde E)}}{m}$.
\end{proposition}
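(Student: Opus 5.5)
The argument is a one-dimensional monotonicity argument for $H(E)=E-\Phi(E)$ on $I$, combined with the intermediate value theorem. Since $\Phi\in C^1(I)$ and $H'(E)=1-\Phi'(E)\ge m>0$ on $I$, the function $H$ is strictly increasing on the interval $I$. In particular it has at most one zero in $I$, so uniqueness is immediate.

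For existence, set $E_\pm:=\tilde E\pm\rho_0/m$; by assumption $[E_-,E_+]\subset I$. By the mean value theorem on this subinterval,
\[
H(E_+)=H(\tilde E)+\int_{\tilde E}^{E_+}H'(s)\,ds\ge H(\tilde E)+m\cdot\frac{\rho_0}{m}=H(\tilde E)+\rho_0\ge0,
\]
because $H(\tilde E)=\tilde E-\Phi(\tilde E)\ge-\rho_0$. Symmetrically,
\[
H(E_-)\le H(\tilde E)-\rho_0\le0 .
\]
Since $H$ is continuous on $[E_-,E_+]$, the intermediate value theorem yields a zero $E^*\in[E_-,E_+]\subset I$. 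Uniqueness in $I$ then shows this is the unique zero there. Its location in $[E_-,E_+]$ gives exactly $|E^*-\tilde E|\le\rho_0/m$.

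As an alternative route to the bound, one can use the mean value theorem directly:
\[
\rho_0=|H(\tilde E)-H(E^*)|\ge m|\tilde E-E^*| .
\]
This, however, presupposes existence of $E^*$ in $I$, which is why the enclosure argument above comes first.

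The only point requiring care is justifying the integral and mean-value steps on $[E_-,E_+]$. This uses that $I$ is an interval, implicitly assumed as in $I\subset[0,\bar M]$, and that the enclosure hypothesis keeps both endpoints inside $I$, where the bound $H'\ge m$ holds. The degenerate case $\rho_0=0$ is trivial: there $\tilde E$ itself is the zero.
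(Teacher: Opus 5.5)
Your proof is correct and follows essentially the same route as the paper: strict monotonicity of $H$ from $H'\ge m$, the enclosure bounds $H(E_+)\ge H(\tilde E)+\rho_0$ and $H(E_-)\le H(\tilde E)-\rho_0$, and the intermediate value theorem. The only differences are cosmetic. You apply the intermediate value theorem once on $[E_-,E_+]$, whereas the paper splits into the cases $H(\tilde E)\ge0$ and $H(\tilde E)\le0$. You also rightly make explicit that $I$ must be an interval, which the paper leaves implicit.
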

\begin{definition}[Forward--adjoint consistency defect]
\label{def:main-FA-defect}
For a discretization producing
$B^h(0)$ and
$\Phi_h'(E^*)$,
define
$\mathcal D_{\rm FA}^h
:=
B^h(0)-\Phi_h'(E^*)$.
The exact defect is
$\displaystyle 
\mathcal D_{\rm FA}
:=
B(0)-\Phi'(E^*)=0$.
\end{definition}
\begin{proposition}[Switching robustness certificate]
\label{prop:main-switching-certificate}
Let computed quantities
$\widehat B$, 
$\widehat A$, 
$\widehat\psi$, 
$\widehat\psi'$,
$\widehat\Delta:=1-\widehat B$,
and
$\displaystyle \widehat\Gamma:=\frac{\widehat A}{\widehat\Delta}$
satisfy
$\displaystyle 
\widehat\Delta\ne0$.
Let
$\widehat\rho_\psi:=
\abs{\widehat\Gamma}\norm{\widehat\psi}_{L^\infty(\Om)}$ and
$\widehat\rho_{\psi,1}:=
\abs{\widehat\Gamma}\norm{\widehat\psi'}_{L^\infty(\Om)}$.
If
\begin{equation}
\label{eq:main-switch-certificate}
\widehat\rho_\psi<\widehat m_{\rm out}(\delta),
\qquad
\widehat\rho_{\psi,1}<\widehat m_1(\delta),
\end{equation}
then, up to the verified accuracy of the computed quantities, the switching
policy is single-threshold. Failure of \eqref{eq:main-switch-certificate},
together with a solution of
$\displaystyle 
\widehat S_{\rm red}\widehat\psi'
-
\widehat S_{\rm red}'\widehat\psi
=0$,
indicates creation or annihilation of a switching window.
\end{proposition}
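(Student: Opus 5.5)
The plan is to reduce the certificate to the two deterministic switching results already established, Proposition~\ref{prop:main-no-exterior} and Proposition~\ref{prop:main-single-threshold}, applied to the computed objects in place of the exact ones. The second assertion will come from Proposition~\ref{prop:main-window}. Throughout I will read ``up to the verified accuracy'' as follows: the computed quantities $\widehat S_{\rm red},\widehat\psi,\widehat\Gamma$ either coincide with the exact ones, or the certified error bounds have already been absorbed into the margins $\widehat m_{\rm out}(\delta)$ and $\widehat m_1(\delta)$. In that case the inequalities \eqref{eq:main-switch-certificate} imply the corresponding exact inequalities.

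First, since $\widehat\Delta=1-\widehat B\ne0$, Theorem~\ref{thm:main-rankone} and Corollary~\ref{cor:main-switching-formula} give a well-defined coefficient $\widehat\Gamma=\widehat A/\widehat\Delta$. The computed switching function is then $\widehat S=\widehat S_{\rm red}-\widehat\Gamma\widehat\psi$.

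Second, the exterior bound. By the first inequality in \eqref{eq:main-switch-certificate}, on $\Om\setminus\mathcal N_\delta$ we have
\[
\abs{\widehat S}\ge \abs{\widehat S_{\rm red}}-\abs{\widehat\Gamma}\norm{\widehat\psi}_{L^\infty(\Om)}\ge \widehat m_{\rm out}(\delta)-\widehat\rho_\psi>0 .
\]
This is exactly the hypothesis of Proposition~\ref{prop:main-no-exterior}, so $\widehat S$ has no zero outside $\mathcal N_\delta$.

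Third, the interior monotonicity. By the second inequality, on $\mathcal N_\delta$ we have
\[
\abs{\widehat S'}\ge \widehat m_1(\delta)-\widehat\rho_{\psi,1}>0 ,
\]
so $\widehat S$ is strictly monotone there and has at most one zero. This is precisely the argument of Proposition~\ref{prop:main-single-threshold}, which I would invoke directly. It yields a single-threshold policy $u=u_{\max}\mathbf 1_{\{S>0\}}$. Existence of the threshold additionally requires the sign change $S(l_0)S(l_m)<0$. I would make this explicit in the proof, or note that it also follows from the exterior bound, since $\widehat S$ inherits the signs of $\widehat S_{\rm red}$ at the endpoints.

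Finally, the diagnostic statement. Suppose \eqref{eq:main-switch-certificate} fails. By Proposition~\ref{prop:main-window}, the number of zeros of $S_{\rm red}-\Gamma\psi$ can change only at a double zero. Eliminating $\Gamma$ from $S_{\rm red}=\Gamma\psi$ and $S_{\rm red}'=\Gamma\psi'$ gives the Wronskian-type condition $\widehat S_{\rm red}\widehat\psi'-\widehat S_{\rm red}'\widehat\psi=0$. A solution of this condition therefore marks the creation or annihilation of a switching window.

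The main obstacle is making ``up to verified accuracy'' rigorous. I would first try to do this with a perturbation inequality: if the exact and computed quantities differ by at most $\varepsilon$ in the relevant $W^{1,\infty}$ norms, including the error in $\widehat\Gamma$ controlled through $\abs{\widehat\Delta}$, then margins reduced by $O(\varepsilon)$ still transfer to the exact $S$. If that bookkeeping becomes unwieldy, I would state the result conditionally, with these error bounds taken as hypotheses.
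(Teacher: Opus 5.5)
Your proposal is correct and matches the paper's proof essentially step for step: the exterior bound $\abs{\widehat S}\ge\widehat m_{\rm out}(\delta)-\widehat\rho_\psi>0$ off $\mathcal N_\delta$, strict monotonicity on $\mathcal N_\delta$ from $\abs{\widehat S'}\ge\widehat m_1(\delta)-\widehat\rho_{\psi,1}>0$, the endpoint sign condition for existence of the threshold, and elimination of $\widehat\Gamma$ from the double-zero system to obtain $\widehat S_{\rm red}\widehat\psi'-\widehat S_{\rm red}'\widehat\psi=0$. One small caution concerns your alternative claim that $\widehat S(l_0)\widehat S(l_m)<0$ follows from the exterior bound: this holds only if $l_0,l_m\notin\mathcal N_\delta$ and $\widehat S_{\rm red}(l_0)\widehat S_{\rm red}(l_m)<0$ (when $\mathcal N_\delta$ reaches an endpoint, $\widehat S$ can lose its zero), so stating the sign change as an explicit hypothesis, as the paper does, is the right choice.
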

\section{Proofs}
\label{sec:proofs}
The frozen-path estimates used below are collected in
Appendix~\ref{app:transport}. In particular,
Theorem~\ref{thm:frozen-transport} gives positivity, the mass estimate,
the output Lipschitz bound, the \(BV\)-propagation estimate, and the
Volterra comparison estimate.
\subsection{Proof of closed-loop well-posedness}
\label{subsec:proof-wp}
\begin{proof}[Proof of Theorem~\ref{thm:main-frozen}]
This is Theorem~\ref{thm:frozen-transport} in Appendix~\ref{app:transport}.
\end{proof}
\begin{proof}[Proof of Lemma~\ref{lem:main-output-regularity}]
By Theorem~\ref{thm:frozen-transport}, for each \(E\in\Bt\),
$x_E=\Tcal_E(x_0,p,u)\ge0$, 
$\norm{x_E(t)}_{L^1}\le M_x(T)$, and
$\norm{x_E(t)}_{L^\infty}\le K_\infty$.
Testing the frozen equation against \(\chi\) gives, for a.e. \(t\),
\[
\frac{d}{dt}\Kcal(E)(t)
=
\chi(l_0)p(t)
-\chi(l_m)g(E(t),l_m)x_E(t,l_m)
+\int_{\Om}\big(g(E(t),l)\chi'(l)-(\mu(E(t),l)+u(t,l))\chi(l)\big)x_E(t,l)\,dl .
\]
Hence
\[
\abs{\dot{\Kcal}(E)(t)}
\le
\norm{\chi}_{\infty}\norm{p}_{\infty}
+
C_M\norm{\chi}_{\infty}K_\infty
+
\big(C_M\norm{\chi'}_{\infty}+\Theta_M\norm{\chi}_{\infty}\big)M_x(T)
=\Lambda^* .
\]
Thus \(\Kcal(E)\in W^{1,\infty}(0,T)\), \(\Lip(\Kcal(E))\le\Lambda^*\), and
$\displaystyle 
0\le \Kcal(E)(t)\le \norm{\chi}_{\infty}M_x(T)=M_T$.
Therefore \(\Kcal(\Bt)\subset\Btl\).
\end{proof}
\begin{proof}[Proof of Proposition~\ref{prop:main-volterra}]
This is Proposition~\ref{prop:volterra-transport} in Appendix~\ref{app:transport}.
The estimate has the form
\[
\norm{\Tcal_{E_1}(x_0,p,u)(t)-\Tcal_{E_2}(x_0,p,u)(t)}_{L^1}
\le
C_T\int_0^t\abs{E_1(s)-E_2(s)}\,ds .
\]
Multiplying by \(\norm{\chi}_{\infty}\) gives
\[
\abs{\Kcal(E_1)(t)-\Kcal(E_2)(t)}
\le
\norm{\chi}_{\infty}C_T\int_0^t\abs{E_1(s)-E_2(s)}\,ds
=
\widehat C_T\int_0^t\abs{E_1(s)-E_2(s)}\,ds .
\]
\end{proof}
\begin{proof}[Proof of Theorem~\ref{thm:main-wellposedness}]
By Lemma~\ref{lem:main-output-regularity},
$\displaystyle 
\Kcal:\Btl\to\Btl$.
The set \(\Btl\) is closed in \(L^\infty(0,T)\): if \(E_n\to E\) uniformly,
\(0\le E_n\le M_T\), and \(\Lip(E_n)\le\Lambda^*\), then
\(0\le E\le M_T\) and
\[
\abs{E(t)-E(s)}
=
\lim_{n\to\infty}\abs{E_n(t)-E_n(s)}
\le
\Lambda^*\abs{t-s}.
\]
Thus \((\Btl,\norm{\cdot}_\beta)\) is complete.
For \(E_1,E_2\in\Btl\), Proposition~\ref{prop:main-volterra} gives
\[
\abs{\Kcal(E_1)(t)-\Kcal(E_2)(t)}
\le
\widehat C_T\int_0^t\abs{E_1(s)-E_2(s)}\,ds .
\]
Since \(\abs{E_1(s)-E_2(s)}\le e^{\beta s}\norm{E_1-E_2}_\beta\),
\[
e^{-\beta t}\abs{\Kcal(E_1)(t)-\Kcal(E_2)(t)}
\le
\widehat C_T e^{-\beta t}\int_0^t e^{\beta s}\,ds\,
\norm{E_1-E_2}_\beta
\le
\frac{\widehat C_T}{\beta}\norm{E_1-E_2}_\beta .
\]
Therefore
$\displaystyle 
\norm{\Kcal(E_1)-\Kcal(E_2)}_\beta
\le
\frac{\widehat C_T}{\beta}\norm{E_1-E_2}_\beta$.
If \(\beta>\widehat C_T\), then \(\widehat C_T/\beta<1\). Banach's theorem gives
a unique \(E^{\mathrm{cl}}\in\Btl\) with \(E^{\mathrm{cl}}=\Kcal(E^{\mathrm{cl}})\). Put
\(x^{\mathrm{cl}}:=\Tcal_{E^{\mathrm{cl}}}(x_0,p,u)\). Then
\(E^{\mathrm{cl}}=\langle\chi,x^{\mathrm{cl}}\rangle\), and the frozen weak formulation is
exactly \eqref{eq:model-weak-form}; hence \((x^{\mathrm{cl}},E^{\mathrm{cl}})\) is a
closed-loop weak solution.
If \((x,E)\in L^\infty(0,T;L^1_+)\times L^\infty_+\) is another closed-loop weak
solution, then \(x=\Tcal_E(x_0,p,u)\) and \(E=\Kcal(E)\). Lemma~\ref{lem:main-output-regularity}
implies \(E\in\Btl\). The fixed point is unique in \(\Btl\), so \(E=E^{\mathrm{cl}}\), hence
\(x=\Tcal_{E^{\mathrm{cl}}}(x_0,p,u)=x^{\mathrm{cl}}\).
\end{proof}
\begin{proof}[Proof of Proposition~\ref{prop:main-continuation}]
Assume \(\tau_{\max}<\infty\) and the bounds
\[
\norm{x}_{L^\infty(0,T;BV)}
+
\norm{E}_{L^\infty(0,T)}
+
\norm{p}_{W^{1,\infty}(0,T)}
+
\norm{u}_{L^\infty(0,T;BV)}
\le C_*
\]
hold uniformly for \(T<\tau_{\max}\), with
$\displaystyle 
\inf_{\substack{0\le E\le \sup_{t<\tau_{\max}}E(t)\\ l\in\Om}}g(E,l)\ge g_*>0$.
Choose \(T_0<\tau_{\max}\). Then
$x(T_0,\cdot)\in BV_+(\Om)$ and
$\norm{x(T_0,\cdot)}_{BV}\le C_*$.
Using \(x(T_0,\cdot)\) as initial datum, the constants in
Theorem~\ref{thm:main-wellposedness} on \([T_0,T_0+\delta]\) depend only on
$C_*$, $g_*$, $C_M$, $L_M$, $C_2$, $u_{\max}$, and $\Om$,
not on \(T_0\). Thus \(\delta>0\) can be chosen uniformly for \(T_0\) close to
\(\tau_{\max}\). Taking \(T_0\in(\tau_{\max}-\delta/2,\tau_{\max})\) extends the
solution beyond \(\tau_{\max}\), contradiction. Therefore \(\tau_{\max}=\infty\).
\end{proof}
\subsection{Proofs for stationary closure}
\label{subsec:proof-stationary}
\begin{proof}[Proof of Lemma~\ref{lem:main-stationary-box}]
Fix \(E\in[0,\bar M]\). The stationary frozen equation is
\[
\frac{d}{dl}\big(g(E,l)x_E(l)\big)
=
-\big(\mu(E,l)+u(l)\big)x_E(l),
\qquad
g(E,l_0)x_E(l_0)=p .
\]
Set
$Y_E(l):=g(E,l)x_E(l)$ and
$\displaystyle q_E(l):=\frac{\mu(E,l)+u(l)}{g(E,l)}$.
Then
$Y_E'(l)=-q_E(l)Y_E(l)$ with
$Y_E(l_0)=p$.
Hence
\[
Y_E(l)
=
p\exp\!\left[-\int_{l_0}^{l}q_E(\xi)\,d\xi\right]
=
p\exp\!\left[-\int_{l_0}^{l}
\frac{\mu(E,\xi)+u(\xi)}{g(E,\xi)}\,d\xi\right].
\]
Since
$\mu(E,\xi)\ge0$, $u(\xi)\ge0$, $g(E,\xi)>0$,
one has $0<Y_E(l)\le p$.
Using \(g(E,l)\ge g_\flat\),
$\displaystyle 
0<x_E(l)=\frac{Y_E(l)}{g(E,l)}
\le
\frac{p}{g_\flat}$.
Consequently,
\[
0\le\Phi(E)
=
\int_{l_0}^{l_m}\chi(l)x_E(l)\,dl
\le
\int_{l_0}^{l_m}\norm{\chi}_{\infty}\frac{p}{g_\flat}\,dl
=
\norm{\chi}_{\infty}(l_m-l_0)\frac{p}{g_\flat}
=
\bar M .
\]
Thus
$E\in[0,\bar M]$ implies $\Phi(E)\in[0,\bar M]$, i.e.
$\Phi([0,\bar M])\subset[0,\bar M]$.
\end{proof}
\begin{proof}[Proof of Proposition~\ref{prop:main-decomp}]
For \(E\in[0,\bar M]\), write
$\displaystyle 
q_E(l):=\frac{\mu(E,l)+u(l)}{g(E,l)}$.
Then
$\displaystyle x_E(l)
=
\frac{p}{g(E,l)}
\exp\!\left[-\int_{l_0}^{l}q_E(\xi)\,d\xi\right]$.
Thus
$\displaystyle 
\log x_E(l)
=
\log p-\log g(E,l)-\int_{l_0}^{l}q_E(\xi)\,d\xi$.
Differentiate in \(E\):
\[
\partial_E\log x_E(l)
=
-\frac{\partial_E g(E,l)}{g(E,l)}
-
\int_{l_0}^{l}\partial_E q_E(\xi)\,d\xi .
\]
With
\[
\alpha(E,l):=-\frac{\partial_E g(E,l)}{g(E,l)},
\qquad
b(E,l):=\partial_E q_E(l)
=
\partial_E\left(\frac{\mu(E,l)+u(l)}{g(E,l)}\right),
\]
we get
$\displaystyle 
\partial_E\log x_E(l)
=
\alpha(E,l)-\int_{l_0}^{l}b(E,\xi)\,d\xi$,
and hence
\begin{equation}
\label{eq:proof-stationary-sensitivity}
\partial_E x_E(l)
=
x_E(l)\alpha(E,l)
-
x_E(l)\int_{l_0}^{l}b(E,\xi)\,d\xi .
\end{equation}
Since
$\displaystyle 
\Phi(E)=\int_{l_0}^{l_m}\chi(l)x_E(l)\,dl$,
differentiation under the integral gives
$\displaystyle 
\Phi'(E)
=
\int_{l_0}^{l_m}\chi(l)\partial_E x_E(l)\,dl$.
Using \eqref{eq:proof-stationary-sensitivity},
\[
\begin{aligned}
\Phi'(E)
&=
\int_{l_0}^{l_m}\chi(l)x_E(l)\alpha(E,l)\,dl
-
\int_{l_0}^{l_m}\chi(l)x_E(l)
\left(\int_{l_0}^{l}b(E,\xi)\,d\xi\right)dl\\[4pt]
&=\int_{l_0}^{l_m}\chi(l)x_E(l)\alpha(E,l)\,dl
- \int_{l_0}^{l_m}
b(E,\xi)
\left(
\int_{\xi}^{l_m}\chi(l)x_E(l)\,dl
\right)d\xi\\[4pt]
&=
\underbrace{
\int_{l_0}^{l_m}\chi(l)x_E(l)\alpha(E,l)\,dl
}_{\Rcal(E)}
-
\underbrace{
\int_{l_0}^{l_m}b(E,\xi)W_E(\xi)\,d\xi
}_{\Ccal(E)}.
\end{aligned}
\]
\end{proof}
\begin{proof}[Proof of Theorem~\ref{thm:main-stationary}]
By Lemma~\ref{lem:main-stationary-box},
$\displaystyle 
\Phi([0,\bar M])\subset[0,\bar M]$.
It remains to verify continuity. Fix \(E_n\to E\) in \([0,\bar M]\). Let
\[
q_n(l):=\frac{\mu(E_n,l)+u(l)}{g(E_n,l)},
\qquad
q(l):=\frac{\mu(E,l)+u(l)}{g(E,l)} .
\]
From
$g(E_n,\cdot)\to g(E,\cdot)$,
$\mu(E_n,\cdot)\to\mu(E,\cdot)$ in $L^\infty(\Om)$,
and
$g(E_n,l)\ge g_\flat>0$,
we obtain
$\displaystyle 
\norm{q_n-q}_{L^\infty(\Om)}\to0$.
Also
$\displaystyle 
\left\|
\frac{1}{g(E_n,\cdot)}-\frac{1}{g(E,\cdot)}
\right\|_{\infty}
\le
g_\flat^{-2}
\norm{g(E_n,\cdot)-g(E,\cdot)}_{\infty}
\to0$.
Now
\[
\begin{aligned}
x_{E_n}(l)-x_E(l)
&=
p\left[
\frac{1}{g(E_n,l)}e^{-\int_{l_0}^{l}q_n(\xi)\,d\xi}
-
\frac{1}{g(E,l)}e^{-\int_{l_0}^{l}q(\xi)\,d\xi}
\right] \\[4pt]
&=
p\left(\frac{1}{g(E_n,l)}-\frac{1}{g(E,l)}\right)
e^{-\int_{l_0}^{l}q_n\,d\xi}
+
\frac{p}{g(E,l)}
\left(
e^{-\int_{l_0}^{l}q_n\,d\xi}
-
e^{-\int_{l_0}^{l}q\,d\xi}
\right).
\end{aligned}
\]
Since \(q_n,q\ge0\), $0<e^{-\int_{l_0}^{l}q_n}\le1$ and 
$0<e^{-\int_{l_0}^{l}q}\le1$.
Moreover, using \(|e^{-a}-e^{-b}|\le |a-b|\) for \(a,b\ge0\),
\[
\left|
e^{-\int_{l_0}^{l}q_n}
-
e^{-\int_{l_0}^{l}q}
\right|
\le
\int_{l_0}^{l}\abs{q_n(\xi)-q(\xi)}\,d\xi
\le
(l_m-l_0)\norm{q_n-q}_{\infty}.
\]
Thus
\[
\norm{x_{E_n}-x_E}_{L^1}
\le
p(l_m-l_0)
g_\flat^{-2}
\norm{g(E_n,\cdot)-g(E,\cdot)}_{\infty}
+
p g_\flat^{-1}(l_m-l_0)^2\norm{q_n-q}_{\infty}
\to0.
\]
Therefore
$\displaystyle
\Phi(E_n)-\Phi(E)
=
\int_{l_0}^{l_m}\chi(l)(x_{E_n}(l)-x_E(l))\,dl$
satisfies
\[
\abs{\Phi(E_n)-\Phi(E)}
\le
\norm{\chi}_{\infty}\norm{x_{E_n}-x_E}_{L^1}
\to0.
\]
Hence
$\displaystyle 
\Phi\in C([0,\bar M])$.
Since \([0,\bar M]\) is compact and convex and \(\Phi\) is a continuous self-map,
there exists \(E^*\in[0,\bar M]\) such that
$\displaystyle 
E^*=\Phi(E^*)$.
For uniqueness, assume for all $E\in[0,\bar M]$,
$\Phi\in C^1([0,\bar M])$ and
$\Phi'(E)<1$.
Let
$H(E):=E-\Phi(E)$.
Then $H'(E)=1-\Phi'(E)>0$ on $[0,\bar M]$. 
Therefore \(H\) is strictly increasing and has at most one zero. 
Since existence is already proved, the stationary level is unique.
\end{proof}
\begin{proof}[Proof of Corollary~\ref{cor:main-dominance}]
Assume for all $E\in[0,\bar M]$,
$\Ccal(E)>\Rcal(E)$.
By Proposition~\ref{prop:main-decomp},
$\displaystyle 
\Phi'(E)=\Rcal(E)-\Ccal(E)<0$.
Therefore $\Phi'(E)<0<1$ on $[0,\bar M]$.
Theorem~\ref{thm:main-stationary} applies and gives uniqueness of the stationary
feedback level.
\end{proof}
\subsection{Proof of the fold theorem}
\label{subsec:proof-fold}
\begin{proof}[Proof of Theorem~\ref{thm:main-fold}]
Let
$H(E,\eta):=E-\Phi(E;\eta)$,
$H\in C^2(\mathcal I_E\times\mathcal I_\eta)$,
and assume
\[
H(E_0,\eta_0)=0,
\qquad
H_E(E_0,\eta_0)=0,
\qquad
H_{EE}(E_0,\eta_0)\ne0,
\qquad
H_\eta(E_0,\eta_0)\ne0 .
\]
Since
$\displaystyle 
H_\eta(E_0,\eta_0)\ne0$,
the implicit function theorem applied to
$\displaystyle 
H(E,\eta)=0$
in the \(\eta\)-variable gives neighborhoods
$E_0\in I_E$ and $\eta_0\in I_\eta$,
and a unique function
$\displaystyle 
\eta=\eta(E)\in C^2(I_E;I_\eta)$
such that
\[
H(E,\eta(E))=0,
\qquad
\eta(E_0)=\eta_0.
\]
Set
$s:=E-E_0$ and
$\zeta(s):=\eta(E_0+s)-\eta_0$.
Then
$\zeta(0)=0$ and
$H(E_0+s,\eta_0+\zeta(s))=0$.
Differentiate \(H(E,\eta(E))=0\) once:
\[
0
=
H_E(E,\eta(E))+H_\eta(E,\eta(E))\eta'(E).
\]
At \(E=E_0\),
\[
0
=
H_E(E_0,\eta_0)+H_\eta(E_0,\eta_0)\eta'(E_0)
=
0+H_\eta(E_0,\eta_0)\eta'(E_0).
\]
Since \(H_\eta(E_0,\eta_0)\ne0\),
$\displaystyle 
\eta'(E_0)=0$.
Equivalently,
$\displaystyle 
\zeta'(0)=0$.
Differentiate again:
\[
0
=
\frac{d}{dE}
\left[
H_E(E,\eta(E))+H_\eta(E,\eta(E))\eta'(E)
\right].
\]
Using the chain rule,
\[
\frac{d}{dE}H_E(E,\eta(E))
=
H_{EE}(E,\eta(E))
+
H_{E\eta}(E,\eta(E))\eta'(E),
\]
and
\[
\frac{d}{dE}
\big(H_\eta(E,\eta(E))\eta'(E)\big)
=
\big(H_{\eta E}(E,\eta(E))
+
H_{\eta\eta}(E,\eta(E))\eta'(E)\big)\eta'(E)
+
H_\eta(E,\eta(E))\eta''(E).
\]
Since \(H_{E\eta}=H_{\eta E}\),
\[
0
=
H_{EE}
+
2H_{E\eta}\eta'
+
H_{\eta\eta}(\eta')^2
+
H_\eta\eta''.
\]
At \(E=E_0\), using \(\eta'(E_0)=0\),
\[
0
=
H_{EE}(E_0,\eta_0)
+
H_\eta(E_0,\eta_0)\eta''(E_0).
\]
Thus
$\displaystyle 
\eta''(E_0)
=
-\frac{H_{EE}(E_0,\eta_0)}{H_\eta(E_0,\eta_0)}$.
Because
$H_{EE}(E_0,\eta_0)\ne0$,
$H_\eta(E_0,\eta_0)\ne0$,
we also have
$\displaystyle 
\eta''(E_0)\ne0$.
Taylor expansion of \(\eta(E)\) at \(E_0\) gives
\[
\eta(E_0+s)
=
\eta(E_0)
+
\eta'(E_0)s
+
\frac12\eta''(E_0)s^2
+
o(s^2).
\]
Using
$\eta(E_0)=\eta_0$,
$\eta'(E_0)=0$, and
$\eta''(E_0)
=
-\frac{H_{EE}(E_0,\eta_0)}{H_\eta(E_0,\eta_0)}$,
we obtain
\[
\eta(E_0+s)
=
\eta_0
-
\frac{H_{EE}(E_0,\eta_0)}
     {2H_\eta(E_0,\eta_0)}
s^2
+
o(s^2).
\]
Since
$\displaystyle 
H(E,\eta)=E-\Phi(E;\eta)$,
one has
\[
H_E=1-\Phi_E,
\qquad
H_\eta=-\Phi_\eta,
\qquad
H_{EE}=-\Phi_{EE}.
\]
The hypotheses may therefore be written as
\[
E_0=\Phi(E_0;\eta_0),
\qquad
\Phi_E(E_0;\eta_0)=1,
\qquad
\Phi_{EE}(E_0;\eta_0)\ne0,
\qquad
\Phi_\eta(E_0;\eta_0)\ne0.
\]
Moreover,
$\displaystyle 
-\frac{H_{EE}}{2H_\eta}
=
-\frac{\Phi_{EE}}{2\Phi_\eta}$,
all derivatives being evaluated at \((E_0,\eta_0)\). Thus
\[
\eta(E_0+s)
=
\eta_0
-
\frac{\Phi_{EE}(E_0;\eta_0)}
     {2\Phi_\eta(E_0;\eta_0)}
s^2
+
o(s^2).
\]
Equivalently, with
$\displaystyle 
\kappa_{\rm fold}
:=
-\frac{H_{EE}(E_0,\eta_0)}{2H_\eta(E_0,\eta_0)}
=
-\frac{\Phi_{EE}(E_0;\eta_0)}{2\Phi_\eta(E_0;\eta_0)}$,
the local zero set is
\[
\eta-\eta_0
=
\kappa_{\rm fold}(E-E_0)^2
+
o\!\left((E-E_0)^2\right).
\]
Since
$\displaystyle 
\kappa_{\rm fold}\ne0$,
the projection of the solution curve onto the \(\eta\)-axis has a quadratic
turning point at \((E_0,\eta_0)\).
If \(\kappa_{\rm fold}>0\), then for \(\eta>\eta_0\) sufficiently close to
\(\eta_0\),
$\displaystyle 
\eta-\eta_0
=
\kappa_{\rm fold}s^2+o(s^2)$
has two local branches
\[
s_\pm(\eta)
=
\pm
\sqrt{\frac{\eta-\eta_0}{\kappa_{\rm fold}}}
+
o\!\left(\sqrt{\abs{\eta-\eta_0}}\right),
\]
i.e.
\[
E_\pm(\eta)
=
E_0
\pm
\sqrt{\frac{\eta-\eta_0}{\kappa_{\rm fold}}}
+
o\!\left(\sqrt{\abs{\eta-\eta_0}}\right).
\]
For \(\eta<\eta_0\) sufficiently close to \(\eta_0\), no real \(s\ne0\) solves the
local quadratic equation. If \(\kappa_{\rm fold}<0\), the orientation is reversed:
two branches occur for \(\eta<\eta_0\), and none for \(\eta>\eta_0\).
Thus \((E_0,\eta_0)\) is a nondegenerate local fold of the stationary closure
equation.
\end{proof}
\subsection{Proof of the linearized renewal reduction}
\label{subsec:proof-renewal}
\begin{proof}[Proof of Lemma~\ref{lem:main-sweep}]
Let
\[
\tau(l):=\int_{l_0}^{l}\frac{d\xi}{g^*(\xi)},
\qquad
0=\tau(l_0)<\tau(l)\le \tau(l_m)=t_\sharp .
\]
Since \(g^*(l)\ge g_M>0\), the map
$\displaystyle 
l\mapsto \tau(l)$
is strictly increasing, absolutely continuous, and invertible on
\([0,t_\sharp]\). Let
$\displaystyle 
L(t;l)$
denote the forward characteristic issued from \(l\), i.e.
\[
\frac{d}{dt}L(t;l)=g^*(L(t;l)),
\qquad
L(0;l)=l .
\]
Equivalently,
$\displaystyle \tau(L(t;l))=\tau(l)+t$.
Hence the exit time from \(l\) is
$\displaystyle t_{\rm exit}(l)=t_\sharp-\tau(l)\le t_\sharp$.
Thus every characteristic issued from \(\Om\) reaches \(l_m\) not later than
\(t_\sharp\).

Let \(U_0(t)\) be the zero-inflow semigroup, and denote \(\psi(t, l) = U_0(t)\varphi(l)\). Recall that \(\psi\) satisfies the zero-inflow transport equation:
\[
\partial_t \psi + \partial_l (g^* \psi) = -(\mu^*+u)\psi.
\]
By applying the chain rule along the characteristic curve \(L(t;l)\) which satisfies \(\frac{d}{dt}L(t;l) = g^*(L(t;l))\), we evaluate the derivative:
\[
\frac{d}{dt}\big(g^*(L)\psi(t,L)\big) 
= (g^*)'g^*\psi + g^*(\partial_t \psi + g^*\partial_l \psi) 
= g^*\big[\partial_t \psi + \partial_l (g^*\psi)\big].
\]
Substituting the transport equation into the bracket, we arrive at the ordinary differential equation along characteristics:
\[
\frac{d}{dt}\big(g^*(L(t;l))\,U_0(t)\varphi(L(t;l))\big)
=
-(\mu^*+u)(L(t;l))\,\big[ g^*(L(t;l))\,U_0(t)\varphi(L(t;l)) \big].
\]
Therefore, before exit,
\[
g^*(L(t;l))\,U_0(t)\varphi(L(t;l))
=
g^*(l)\varphi(l)
\exp\!\left[
-\int_0^t(\mu^*+u)(L(s;l))\,ds
\right].
\]
After exit, the value is zero. Since zero inflow gives no re-entry at \(l_0\),
\[
t\ge t_\sharp
\qquad\Longrightarrow\qquad
U_0(t)\varphi\equiv0.
\]
Thus $t\ge t_\sharp$,
$U_0(t)=0$.
For the homogeneous eigenproblem,
\[
\lambda\varphi
=
-\partial_l(g^*\varphi)-(\mu^*+u)\varphi,
\qquad
g^*(l_0)\varphi(l_0)=0,
\]
set
$\displaystyle 
Y(l):=g^*(l)\varphi(l)$.
Then
$\displaystyle 
\lambda\frac{Y(l)}{g^*(l)}
=
-Y'(l)-(\mu^*(l)+u(l))\frac{Y(l)}{g^*(l)}$,
or
$\displaystyle 
Y'(l)
=
-\frac{\lambda+\mu^*(l)+u(l)}{g^*(l)}Y(l)$.
Hence
\[
Y(l)
=
Y(l_0)
\exp\!\left[
-\int_{l_0}^{l}
\frac{\lambda+\mu^*(\xi)+u(\xi)}{g^*(\xi)}\,d\xi
\right].
\]
Since
$\displaystyle 
Y(l_0)=g^*(l_0)\varphi(l_0)=0$,
we obtain
$Y\equiv0$ and
$\varphi\equiv0$.
The same characteristic formula with datum replaced by a measure source or by a
unit boundary-flux impulse gives
\[
U_0(t)\nu=0,
\qquad
\Sigma_b(t)=0,
\qquad
t\ge t_\sharp .
\]
\end{proof}
\begin{proof}[Proof of Proposition~\ref{prop:main-renewal}]
The linearized system is
\[
\partial_t v+\partial_l(g^*v)+(\mu^*+u)v
=
\sigma_{\rm int}e(t),
\qquad
e(t)=\langle\chi,v(t)\rangle,
\]
with boundary flux
$\displaystyle 
g^*(l_0)v(t,l_0)=-a_0e(t)$.
Let \(e\) first be an arbitrary scalar input. Decompose
$\displaystyle 
v=v^{\rm hom}+v^{\rm int}+v^{\rm bd}$.
The zero-inflow homogeneous part is
$\displaystyle 
v^{\rm hom}(t)=U_0(t)v_0$.
The interior-source part is
$\displaystyle 
v^{\rm int}(t)
=
\int_0^t U_0(t-s)\sigma_{\rm int}\,e(s)\,ds$.
Let \(\Sigma_b(\tau)\) denote the state at time \(\tau\) generated by unit inflow
flux at \(l_0\) at time \(0\). Since the actual linearized boundary flux is
$\displaystyle 
-a_0e(s)$,
the boundary contribution is
$\displaystyle 
v^{\rm bd}(t)
=
-a_0\int_0^t\Sigma_b(t-s)e(s)\,ds$.
Therefore
\[
v(t)
=
U_0(t)v_0
+
\int_0^t U_0(t-s)\sigma_{\rm int}\,e(s)\,ds
-
a_0\int_0^t \Sigma_b(t-s)e(s)\,ds .
\]
Pair with \(\chi\):
\[
\begin{aligned}
e(t)
&=
\langle\chi,U_0(t)v_0\rangle
+
\int_0^t
\langle\chi,U_0(t-s)\sigma_{\rm int}\rangle e(s)\,ds
-
a_0\int_0^t
\langle\chi,\Sigma_b(t-s)\rangle e(s)\,ds\\[4pt]
&=f(t)+\int_0^t\kappa(t-s)e(s)\,ds,
\end{aligned}
\]
where
$\displaystyle 
f(t):=\langle\chi,U_0(t)v_0\rangle$,
and
$\kappa(\tau)
:=
\langle\chi,U_0(\tau)\sigma_{\rm int}\rangle
-
a_0\langle\chi,\Sigma_b(\tau)\rangle$.
By Lemma~\ref{lem:main-sweep}, when $\tau\ge t_\sharp$, $U_0(\tau)=0$ and $\Sigma_b(\tau)=0$.
Consequently,
$\operatorname{supp}f\subset[0,t_\sharp]$ and
$\operatorname{supp}\kappa\subset[0,t_\sharp]$.
It remains only to record the regularity of \(\kappa\). Since
\[
\sigma_{\rm int}
=
-\frac{d}{dl}\big((\partial_E g)(E^*,\cdot)x^*\big)
-(\partial_E\mu)(E^*,\cdot)x^*
\in\Mm(\Om),
\]
and since \(\chi\in W^{1,\infty}(\Om)\), the map
$\tau\mapsto \langle\chi,U_0(\tau)\sigma_{\rm int}\rangle$
is a finite sum of absolutely continuous characteristic contributions plus
finitely many transported atom contributions; hence it is of bounded variation
on \([0,t_\sharp]\). Similarly,
$\tau\mapsto \langle\chi,\Sigma_b(\tau)\rangle$
is a characteristic trace response and belongs to \(BV([0,t_\sharp])\). Therefore
$\kappa\in BV([0,t_\sharp])$.
\end{proof}
\begin{proof}[Proof of Lemma~\ref{lem:main-Ev-props}]
Let \(\lambda\in\C\). The inhomogeneous stationary transport problem is
\[
\frac{d}{dl}\big(g^*\varphi_1^\lambda\big)
+
(\lambda+\mu^*+u)\varphi_1^\lambda
=
\sigma_{\rm int},
\qquad
g^*(l_0)\varphi_1^\lambda(l_0)=-a_0 .
\]
Set
\[
Y^\lambda(l):=g^*(l)\varphi_1^\lambda(l),
\qquad
Q^\lambda(l):=
\frac{\lambda+\mu^*(l)+u(l)}{g^*(l)}.
\]
Then
$\displaystyle 
(Y^\lambda)'(l)+Q^\lambda(l)Y^\lambda(l)=\sigma_{\rm int}$
in the distributional sense, and
$\displaystyle 
Y^\lambda(l_0)=-a_0$.
Define
\[
\Pi^\lambda(l)
:=
\exp\!\left[-\int_{l_0}^{l}Q^\lambda(\xi)\,d\xi\right]
=
\exp\!\left[-\int_{l_0}^{l}
\frac{\lambda+\mu^*(\xi)+u(\xi)}{g^*(\xi)}\,d\xi
\right].
\]
Then
$\displaystyle 
\frac{d}{dl}\left((\Pi^\lambda(l))^{-1}Y^\lambda(l)\right)
=
(\Pi^\lambda(l))^{-1}\,d\sigma_{\rm int}(l)$.
Therefore
\[
(\Pi^\lambda(l))^{-1}Y^\lambda(l)
=
Y^\lambda(l_0)
+
\int_{l_0}^{l}(\Pi^\lambda(s))^{-1}\,d\sigma_{\rm int}(s),
\]
and hence
\[
Y^\lambda(l)
=
\Pi^\lambda(l)
\left[
-a_0+\int_{l_0}^{l}(\Pi^\lambda(s))^{-1}\,d\sigma_{\rm int}(s)
\right].
\]
Since \(Y^\lambda=g^*\varphi_1^\lambda\),
\[
\varphi_1^\lambda(l)
=
\frac{\Pi^\lambda(l)}{g^*(l)}
\left[
-a_0+\int_{l_0}^{l}(\Pi^\lambda(s))^{-1}\,d\sigma_{\rm int}(s)
\right].
\]
Thus
\begin{equation}
\label{eq:proof-Ev-explicit}
\Ev(\lambda)
=
\int_{l_0}^{l_m}
\frac{\chi(l)}{g^*(l)}
\Pi^\lambda(l)
\left[
-a_0+\int_{l_0}^{l}(\Pi^\lambda(s))^{-1}\,d\sigma_{\rm int}(s)
\right]dl .
\end{equation}
For each \(l\),
$\displaystyle 
\Pi^\lambda(l)
=
\exp\!\left[-\lambda\int_{l_0}^{l}\frac{d\xi}{g^*(\xi)}\right]
\Pi^0(l)$,
hence \(\lambda\mapsto\Pi^\lambda(l)\) is entire. Likewise,
$\displaystyle 
(\Pi^\lambda(s))^{-1}
=
\exp\!\left[\lambda\int_{l_0}^{s}\frac{d\xi}{g^*(\xi)}\right]
(\Pi^0(s))^{-1}$
is entire. For \(\lambda\) in a compact set \(K\subset\C\),
\[
\abs{\Pi^\lambda(l)}
\le
e^{C_Kt_\sharp}\Pi^0(l),
\qquad
\abs{(\Pi^\lambda(s))^{-1}}
\le
e^{C_Kt_\sharp}(\Pi^0(s))^{-1},
\]
where
$\displaystyle 
C_K:=\sup_{\lambda\in K}\abs{\lambda}$.
Since \(\sigma_{\rm int}\in\Mm(\Om)\), the integrand in
\eqref{eq:proof-Ev-explicit} is locally dominated uniformly in \(\lambda\). Hence
$\Ev: \mathbb C\mapsto \mathbb C$ is entire.
Because all coefficients and measures are real,
\[
\overline{\Pi^\lambda(l)}=\Pi^{\bar\lambda}(l),
\qquad
\overline{\int_{l_0}^{l}(\Pi^\lambda(s))^{-1}\,d\sigma_{\rm int}(s)}
=
\int_{l_0}^{l}(\Pi^{\bar\lambda}(s))^{-1}\,d\sigma_{\rm int}(s),
\]
so
$\displaystyle 
\Ev(\bar\lambda)=\overline{\Ev(\lambda)}$.
Next, compare \(\Ev\) with the Laplace transform of the renewal kernel. Let
$v(t,l)=e^{\lambda t}\varphi(l)$ and 
$e(t)=e^{\lambda t}\hat e$.
Substitution into the linearized system gives
\[
\frac{d}{dl}(g^*\varphi)+(\lambda+\mu^*+u)\varphi
=
\sigma_{\rm int}\hat e,
\qquad
g^*(l_0)\varphi(l_0)=-a_0\hat e .
\]
Thus
$\displaystyle 
\varphi=\hat e\,\varphi_1^\lambda$.
The consistency condition \(e=\langle\chi,v\rangle\) gives
$\displaystyle 
\hat e
=
\langle\chi,\varphi\rangle
=
\hat e\,\langle\chi,\varphi_1^\lambda\rangle
=
\hat e\,\Ev(\lambda)$.
On the other hand, for a homogeneous modal solution of the scalar renewal
equation,
\[
e(t)=e^{\lambda t}\hat e,
\qquad
f=0,
\]
we have
$\displaystyle 
e^{\lambda t}\hat e
=
\int_0^t\kappa(t-s)e^{\lambda s}\hat e\,ds$.
Writing \(\tau=t-s\),
$\displaystyle 
e^{\lambda t}\hat e
=
e^{\lambda t}\hat e\int_0^t e^{-\lambda\tau}\kappa(\tau)\,d\tau$.
Since \(\operatorname{supp}\kappa\subset[0,t_\sharp]\), for \(t\ge t_\sharp\),
$\displaystyle 
\hat e
=
\hat e\int_0^{t_\sharp}e^{-\lambda\tau}\kappa(\tau)\,d\tau$.
Therefore the two modal consistency factors coincide:
\[
\Ev(\lambda)
=
\int_0^{t_\sharp}e^{-\lambda\tau}\kappa(\tau)\,d\tau.
\]
Now let \(\lambda\to+\infty\), \(\lambda\in\R\). In \eqref{eq:proof-Ev-explicit},
\[
\Pi^\lambda(l)
=
e^{-\lambda\tau(l)}\Pi^0(l),
\qquad
\frac{\Pi^\lambda(l)}{\Pi^\lambda(s)}
=
e^{-\lambda(\tau(l)-\tau(s))}
\frac{\Pi^0(l)}{\Pi^0(s)}.
\]
For \(l>l_0\),
$\displaystyle 
e^{-\lambda\tau(l)}\to0$.
For \(s<l\),
$\displaystyle  e^{-\lambda(\tau(l)-\tau(s))}\to0$.
The diagonal \(s=l\) is irrelevant for the absolutely continuous part and is
controlled for atoms by the one-sided Stieltjes convention in the variation-of-constants formula. The dominated bounds
\[
0\le e^{-\lambda\tau(l)}\le1,
\qquad
0\le e^{-\lambda(\tau(l)-\tau(s))}\le1
\]
give
$\displaystyle 
\Ev(\lambda)\to0$.
For \(\Re\lambda\ge0\),
$\displaystyle 
\abs{\Pi^\lambda(l)}
=
e^{-\Re\lambda\,\tau(l)}\Pi^0(l)
\le
\Pi^0(l)$,
and for \(s\le l\),
$\displaystyle 
\frac{\abs{\Pi^\lambda(l)}}{\abs{\Pi^\lambda(s)}}
=
e^{-\Re\lambda(\tau(l)-\tau(s))}
\frac{\Pi^0(l)}{\Pi^0(s)}
\le
\frac{\Pi^0(l)}{\Pi^0(s)}$.
Therefore, from \eqref{eq:proof-Ev-explicit},
\[
\abs{\Ev(\lambda)}
\le
\int_{l_0}^{l_m}
\frac{\chi(l)}{g^*(l)}
\Pi^0(l)
\left[
\abs{a_0}
+
\int_{l_0}^{l}(\Pi^0(s))^{-1}\,d\abs{\sigma_{\rm int}}(s)
\right]dl
=
G_0.
\]
Finally, put \(\lambda=0\). The equation for \(\varphi_1^0\) is
\[
\frac{d}{dl}(g^*\varphi_1^0)+(\mu^*+u)\varphi_1^0
=
\sigma_{\rm int},
\qquad
g^*(l_0)\varphi_1^0(l_0)=-a_0.
\]
The forward sensitivity
$\displaystyle 
y(l):=\partial_E x_E(l)|_{E=E^*}$
satisfies exactly
\[
\frac{d}{dl}(g^*y)+(\mu^*+u)y
=
\sigma_{\rm int},
\qquad
g^*(l_0)y(l_0)=-a_0 .
\]
Uniqueness of the first-order boundary-value problem gives
$\displaystyle 
\varphi_1^0=y$.
Therefore
$\displaystyle 
\Ev(0)
=
\langle\chi,\varphi_1^0\rangle
=
\langle\chi,y\rangle
=
\Phi'(E^*)$.
\end{proof}
\subsection{Proof of the spectral characterization}
\label{subsec:proof-spectrum}
\begin{proof}[Proof of Theorem~\ref{thm:main-spectrum}]
Let
\[
v(t,l)=e^{\lambda t}\varphi(l),
\qquad
e(t)=\langle\chi,v(t,\cdot)\rangle=e^{\lambda t}\hat e .
\]
Substitution into the linearized system gives
\[
\lambda\varphi+\partial_l(g^*\varphi)+(\mu^*+u)\varphi
=
\sigma_{\rm int}\hat e,
\qquad
g^*(l_0)\varphi(l_0)=-a_0\hat e .
\]
If \(\hat e\ne0\), division by \(\hat e\) gives
\[
\frac{d}{dl}\left(g^*\frac{\varphi}{\hat e}\right)
+
(\lambda+\mu^*+u)\frac{\varphi}{\hat e}
=
\sigma_{\rm int},
\qquad
g^*(l_0)\frac{\varphi(l_0)}{\hat e}=-a_0 .
\]
By Definition~\ref{def:main-characteristic},
$\frac{\varphi}{\hat e}=\varphi_1^\lambda$ and
$\varphi=\hat e\,\varphi_1^\lambda$.
The scalar consistency condition gives
$\displaystyle 
\hat e
=
\langle\chi,\varphi\rangle
=
\hat e\,\langle\chi,\varphi_1^\lambda\rangle
=
\hat e\,\Ev(\lambda)$.
Since \(\hat e\ne0\),
$\Ev(\lambda)=1$.
Conversely, if
$\displaystyle 
\Ev(\lambda)=1$,
then choose
$\hat e:=1$ and
$\varphi:=\varphi_1^\lambda$.
Then
\[
\frac{d}{dl}(g^*\varphi)+(\lambda+\mu^*+u)\varphi=\sigma_{\rm int},
\qquad
g^*(l_0)\varphi(l_0)=-a_0,
\]
and
$\displaystyle 
\langle\chi,\varphi\rangle
=
\langle\chi,\varphi_1^\lambda\rangle
=
\Ev(\lambda)=1=\hat e$.
Thus
\[
v(t,l):=e^{\lambda t}\varphi_1^\lambda(l),
\qquad
e(t):=e^{\lambda t}
\]
is a nontrivial feedback eigenmode.
If \(\hat e=0\), then the modal equation reduces to
\[
\frac{d}{dl}(g^*\varphi)+(\lambda+\mu^*+u)\varphi=0,
\qquad
g^*(l_0)\varphi(l_0)=0 .
\]
By Lemma~\ref{lem:main-sweep},
$\displaystyle 
\varphi\equiv0$.
Hence no nontrivial eigenmode comes from the zero-inflow homogeneous transport
part. Therefore the nontrivial feedback point spectrum is exactly
$\displaystyle 
\sigma_{\rm fb}
=
\{\lambda\in\C:\Ev(\lambda)=1\}$.
By Lemma~\ref{lem:main-Ev-props},
\[
\Ev \text{ is entire},
\qquad
\Ev(\lambda)\to0
\qquad
(\lambda\to+\infty,\ \lambda\in\R).
\]
Hence
$\displaystyle 
F(\lambda):=\Ev(\lambda)-1$
is entire and not identically zero. Therefore its zeros are isolated:
$\displaystyle 
F^{-1}(0)=\{\lambda\in\C:\Ev(\lambda)=1\}$
is a discrete subset of \(\C\).
Moreover,
\[
\Ev(\lambda)
=
\int_0^{t_\sharp}e^{-\lambda\tau}\kappa(\tau)\,d\tau,
\qquad
\kappa\in BV([0,t_\sharp]).
\]
For \(\Re\lambda\ge -\omega\),
$\displaystyle 
\abs{\Ev(\lambda)}
\le
\int_0^{t_\sharp}e^{-\Re\lambda\,\tau}\abs{\kappa(\tau)}\,d\tau
\le
e^{\omega t_\sharp}\norm{\kappa}_{L^1(0,t_\sharp)}$.
For large positive real part,
\[
\Re\lambda\to+\infty
\qquad\Longrightarrow\qquad
\Ev(\lambda)\to0.
\]
Since \(\kappa\in BV([0,t_\sharp])\), integration by parts gives, for
\(\lambda\ne0\),
\[
\Ev(\lambda)
=
\int_0^{t_\sharp}e^{-\lambda\tau}\kappa(\tau)\,d\tau
=
\frac{\kappa(0)-e^{-\lambda t_\sharp}\kappa(t_\sharp)}{\lambda}
+
\frac{1}{\lambda}
\int_{[0,t_\sharp]}e^{-\lambda\tau}\,d\kappa(\tau).
\]
Hence, for every \(\omega>0\), there is a constant \(C_\omega\) with
\[
\abs{\Ev(\lambda)}
\le
\frac{C_\omega}{\abs{\lambda}}
\qquad
\text{for }\Re\lambda\ge-\omega ,
\]
so there exists \(R_\omega>0\) such that
\[
\abs{\lambda}\ge R_\omega,\qquad \Re\lambda\ge-\omega
\qquad\Longrightarrow\qquad
\abs{\Ev(\lambda)}<1/2.
\]
Thus every root of \(\Ev(\lambda)=1\) in \(\{\Re\lambda\ge-\omega\}\) lies in the
compact set
$\displaystyle 
\{\Re\lambda\ge-\omega,\ \abs{\lambda}\le R_\omega\}$,
and, since \(\Ev-1\) is entire and not identically zero, this set contains only
finitely many roots. This is the standard root-localization for renewal and
retarded characteristic equations \cite{diekmann2012delay,gripenberg1990volterra,yu2026size}.
Now consider the scalar renewal equation
\[
e=f+\kappa*e,
\qquad
(\kappa*e)(t):=\int_0^t\kappa(t-s)e(s)\,ds .
\]
Let
$\displaystyle 
\widehat h(\lambda):=\int_0^\infty e^{-\lambda t}h(t)\,dt$.
For \(\Re\lambda\) sufficiently large,
\[
\widehat e(\lambda)
=
\widehat f(\lambda)
+
\widehat\kappa(\lambda)\widehat e(\lambda),
\qquad
\widehat\kappa(\lambda)
=
\int_0^{t_\sharp}e^{-\lambda\tau}\kappa(\tau)\,d\tau
=
\Ev(\lambda).
\]
Thus
$\displaystyle 
(1-\Ev(\lambda))\widehat e(\lambda)=\widehat f(\lambda)$.
The Volterra resolvent \(\rho\) is defined by
$\displaystyle 
\rho=\kappa+\kappa*\rho$,
so, after Laplace transform,
$\displaystyle 
\widehat\rho(\lambda)
=
\widehat\kappa(\lambda)
+
\widehat\kappa(\lambda)\widehat\rho(\lambda)$,
hence
$\displaystyle 
(1-\widehat\kappa(\lambda))\widehat\rho(\lambda)
=
\widehat\kappa(\lambda)$,
and therefore
$\displaystyle 
\widehat\rho(\lambda)
=
\frac{\widehat\kappa(\lambda)}{1-\widehat\kappa(\lambda)}
=
\frac{\Ev(\lambda)}{1-\Ev(\lambda)}$.
Set
$\omega_0:=\sup\{\Re\lambda:\Ev(\lambda)=1\}$.
For every \(\omega>\omega_0\), any $\lambda$ such that $\Re\lambda\ge\omega$ satisfies $1-\Ev(\lambda)\ne0$.
The Paley--Wiener/Volterra resolvent criterion for convolution kernels
\cite{gripenberg1990volterra} gives
\[
\rho\in e^{\omega\cdot}L^1(0,\infty),
\qquad
\norm{\rho}_{L^1_\omega}:=
\int_0^\infty e^{-\omega t}\abs{\rho(t)}\,dt<\infty .
\]
Thus the solution representation
$\displaystyle 
e=f+\rho*f$
satisfies
$\displaystyle 
\norm{e}_\omega
\le
\big(1+\norm{\rho}_{L^1_\omega}\big)\norm{f}_\omega$.
For \(\omega<\omega_0\), there exists a root \(\lambda_*\) with
$\Ev(\lambda_*)=1$,
$\Re\lambda_*>\omega$,
and hence the modal solution
$\displaystyle 
e(t)=e^{\lambda_*t}$
precludes decay with weight \(e^{-\omega t}\). Therefore the scalar renewal
feedback growth bound is
$\omega_0$.
Consequently,
\[
\omega_0<0
\qquad\Longrightarrow\qquad
\text{linearized exponential stability},
\]
whereas
\[
\omega_0>0
\qquad\Longrightarrow\qquad
\text{existence of an exponentially growing feedback mode}.
\]
\end{proof}
\begin{proof}[Proof of Theorem~\ref{thm:main-instability}]
By Lemma~\ref{lem:main-Ev-props},
$\displaystyle 
\Ev(0)=\Phi'(E^*)$.
Assume
$\displaystyle 
\Phi'(E^*)>1$.
Then
$\displaystyle 
\Ev(0)>1$.
Also, along the positive real axis,
$\Ev(\lambda)\to0$ as $\lambda\to+\infty$ for $\lambda\in\R$.
Since \(\Ev(\lambda)\in\R\) for \(\lambda\in\R\), the map
$\displaystyle 
[0,\infty)\ni\lambda\mapsto \Ev(\lambda)$
is continuous and real-valued. Choose \(R>0\) such that
$\displaystyle 
\Ev(R)<1$.
Then
\[
\Ev(0)-1>0,
\qquad
\Ev(R)-1<0.
\]
By the intermediate value theorem, there exists
$\displaystyle 
\lambda^*\in(0,R)$
with
\[
\Ev(\lambda^*)-1=0,
\qquad
\Ev(\lambda^*)=1.
\]
Hence
$\lambda^*>0$,
$\lambda^*\in\sigma_{\rm fb}$,
and
$\omega_0\ge\lambda^*>0$.
Theorem~\ref{thm:main-spectrum} gives linearized instability.
\end{proof}
\begin{proof}[Proof of Theorem~\ref{thm:main-gain-stability}]
Assume
$\displaystyle 
G_0<1$.
By Lemma~\ref{lem:main-Ev-props}, for every \(\lambda\) with \(\Re\lambda\ge0\),
$\abs{\Ev(\lambda)}\le G_0<1$.
Therefore
\[
\Ev(\lambda)\ne1
\qquad
(\Re\lambda\ge0).
\]
Thus
$\displaystyle 
\sup\{\Re\lambda:\Ev(\lambda)=1\}<0$,
i.e.
$\displaystyle 
\omega_0<0$.
Theorem~\ref{thm:main-spectrum} yields linearized exponential stability.
At \(\lambda=0\),
$\displaystyle 
\abs{\Phi'(E^*)}
=
\abs{\Ev(0)}
\le
G_0<1$.
Thus
\[
G_0<1
\qquad\Longrightarrow\qquad
\abs{\Phi'(E^*)}<1.
\]
If the nonlinear quadratic remainder estimate
$\displaystyle 
\norm{\Gcal[e,v_0]}_\omega
\le
C\big(\norm{v_0}_{BV}+\norm{e}_\omega\big)^2$
from Theorem~\ref{thm:main-nonlinear-stability} holds, then the linear decay
$\displaystyle 
\omega_0<0$
and the nonlinear renewal equation
$\displaystyle 
e=f+\kappa*e+\Gcal[e,v_0]$
give local nonlinear exponential stability by
Theorem~\ref{thm:main-nonlinear-stability}.
\end{proof}
\begin{proof}[Proof of Theorem~\ref{thm:main-nonlinear-stability}]
The detailed nonlinear verification is given in Appendix~\ref{app:nonlinear}.
Here we record the dependency chain.
The nonlinear scalar perturbation satisfies
$\displaystyle 
e=f+\kappa*e+\Gcal[e,v_0]$,
where the linear resolvent kernel \(\rho\) satisfies
$\displaystyle 
\rho=\kappa+\kappa*\rho$.
Hence
$e
=
f+\rho*f+\Gcal[e,v_0]+\rho*\Gcal[e,v_0]$.
Let
\[
\omega\in(\omega_0,0),
\qquad
r_\omega:=\norm{\rho}_{L^1_\omega}<\infty.
\]
Then
$\displaystyle 
\norm{e}_\omega
\le
(1+r_\omega)\norm{f}_\omega
+
(1+r_\omega)\norm{\Gcal[e,v_0]}_\omega$.
The free term satisfies
$\displaystyle 
\norm{f}_\omega
\le
C_f\norm{v_0}_{BV}$.
The quadratic hypothesis gives
$\displaystyle 
\norm{\Gcal[e,v_0]}_\omega
\le
C\big(\norm{v_0}_{BV}+\norm{e}_\omega\big)^2$.
Therefore
\[
\norm{e}_\omega
\le
(1+r_\omega)C_f\norm{v_0}_{BV}
+
(1+r_\omega)C\big(\norm{v_0}_{BV}+\norm{e}_\omega\big)^2.
\]
For
$\norm{v_0}_{BV}\le\delta$ with $\delta>0$ sufficiently small,
the standard quadratic bootstrap gives
$\displaystyle 
\norm{e}_\omega
\le
C_e\norm{v_0}_{BV}$.
The nilpotent-window estimate gives
\[
\norm{x(t)-x^*}_{L^1}
\le
C_\sharp\int_{(t-T_\sharp)_+}^{t}\abs{e(s)}\,ds
+
\mathbf 1_{\{t<T_\sharp\}}\norm{v_0}_{L^1}.
\]
Since
$\displaystyle 
\abs{e(s)}
\le
\norm{e}_\omega e^{\omega s}
\le
C_e\norm{v_0}_{BV}e^{\omega s}$,
we get, for \(t\ge T_\sharp\),
\[
\norm{x(t)-x^*}_{L^1}
\le
C_\sharp C_e\norm{v_0}_{BV}
\int_{t-T_\sharp}^{t}e^{\omega s}\,ds
\le
C_\sharp C_e T_\sharp e^{|\omega|T_\sharp}
e^{\omega t}\norm{v_0}_{BV}.
\]
For \(0\le t<T_\sharp\),
$\displaystyle 
\norm{x(t)-x^*}_{L^1}
\le
C\norm{v_0}_{BV}
\le
Ce^{|\omega|T_\sharp}e^{\omega t}\norm{v_0}_{BV}$.
Finally,
\[
\abs{E(t)-E^*}
=
\abs{\langle\chi,x(t)-x^*\rangle}
\le
\norm{\chi}_{\infty}\norm{x(t)-x^*}_{L^1}.
\]
Combining the estimates gives
\[
\norm{x(t)-x^*}_{L^1}+\abs{E(t)-E^*}
\le
M e^{\omega t}\norm{v_0}_{BV},
\qquad
t\ge0.
\]
Under \eqref{eq:model-C2E}, Appendix~\ref{app:nonlinear} proves the quadratic
estimate for \(\Gcal\), completing the verification.
\end{proof}
\subsection{Proof of the rank-one adjoint formula}
\label{subsec:proof-adjoint}
\begin{proof}[Proof of Lemma~\ref{lem:main-loop-admissibility}]
Let
$f\in L^1(\Om)$ and
$\lambda:=R_rf$.
Then
$\Lcal_r^*\lambda=f$ and
$\lambda(l_m)=0$,
that is,
\[
-g^*(l)\lambda'(l)
+
\big(r+\mu^*(l)+u(l)\big)\lambda(l)=f(l),
\qquad
\lambda(l_m)=0 .
\]
Set
\[
Q_r(l):=\frac{r+\mu^*(l)+u(l)}{g^*(l)},
\qquad
\Pi_r(l,s):=
\exp\!\left[
-\int_l^s Q_r(\xi)\,d\xi
\right],
\qquad
l\le s.
\]
Since
$r\ge0$,
$\mu^*\ge0$,
$u\ge0$, and
$g^*\ge g_M>0$,
one has
$Q_r\ge0$ and
$0<\Pi_r(l,s)\le1$.
Solving the terminal-value equation gives
\[
\lambda(l)
=
(R_rf)(l)
=
\int_l^{l_m}
\frac{f(s)}{g^*(s)}
\Pi_r(l,s)\,ds.
\]
Hence
$\displaystyle 
\abs{\lambda(l)}
\le
\int_l^{l_m}
\frac{\abs{f(s)}}{g^*(s)}
\Pi_r(l,s)\,ds
\le
g_M^{-1}\int_l^{l_m}\abs{f(s)}\,ds
\le
g_M^{-1}\norm{f}_{L^1}$.
Therefore
\[
\norm{R_rf}_{L^\infty(\Om)}
\le
g_M^{-1}\norm{f}_{L^1(\Om)}.
\]
Also,
\begin{equation} 
\label{eq:proof-adjoint-Linf}
\norm{R_rf}_{L^1(\Om)}
\le
(l_m-l_0)\norm{R_rf}_{L^\infty(\Om)}
\le
(l_m-l_0)g_M^{-1}\norm{f}_{L^1(\Om)}.
\end{equation}
From
$\displaystyle 
-g^*\lambda'
+
(r+\mu^*+u)\lambda
=f$,
we obtain
$\displaystyle 
\lambda'
=
-\frac{f}{g^*}
+
\frac{r+\mu^*+u}{g^*}\lambda$.
Thus
\[
\abs{\lambda'}
\le
g_M^{-1}\abs{f}
+
g_M^{-1}(r+\norm{\mu^*}_{\infty}+u_{\max})\abs{\lambda}
\le g_M^{-1}\abs{f}
+
g_M^{-1}(r+C_M+u_{\max})\abs{\lambda}.
\]
Integrating over \(\Om\),
\[
\norm{\lambda'}_{L^1}
\le
g_M^{-1}\norm{f}_{L^1}
+
g_M^{-1}(r+C_M+u_{\max})\norm{\lambda}_{L^1}.
\]
Using \eqref{eq:proof-adjoint-Linf},
\[
\norm{\lambda'}_{L^1}
\le
g_M^{-1}\norm{f}_{L^1}
+
g_M^{-1}(r+C_M+u_{\max})(l_m-l_0)g_M^{-1}\norm{f}_{L^1}.
\]
Hence
\begin{equation}
\label{eq:proof-adjoint-W11}
\norm{(R_rf)'}_{L^1}
\le
g_M^{-1}
\left[
1+(r+C_M+u_{\max})(l_m-l_0)g_M^{-1}
\right]\norm{f}_{L^1}.
\end{equation}
Therefore
$R_rf\in W^{1,1}(\Om)$ and $(R_rf)(l_m)=0$, and $R_rf\in X_*$.
Now use the corrected co-load pairing. For every \(\lambda\in X_*\),
\[
\langle\sigma,\lambda\rangle
=
\int_{\Om}x^*(l)
\Big[
(\partial_Eg)(E^*,l)\lambda'(l)
-
(\partial_E\mu)(E^*,l)\lambda(l)
\Big]\,dl .
\]
By
$\abs{\partial_Eg(E^*,l)}\le C_M$ and
$\abs{\partial_E\mu(E^*,l)}\le C_M$,
we get
$\displaystyle 
\abs{\langle\sigma,\lambda\rangle}
\le
C_M\norm{x^*}_{L^\infty}
\left(
\norm{\lambda'}_{L^1}
+
\norm{\lambda}_{L^1}
\right)$.
With \(\lambda=R_rf\),
\[
\abs{\langle\sigma,R_rf\rangle}
\le
C_M\norm{x^*}_{L^\infty}
\left[
\norm{(R_rf)'}_{L^1}
+
\norm{R_rf}_{L^1}
\right].
\]
Using \eqref{eq:proof-adjoint-Linf}--\eqref{eq:proof-adjoint-W11},
$\displaystyle 
\abs{\langle\sigma,R_rf\rangle}
\le
C_{\sigma,r}\norm{f}_{L^1}$,
where
\[
C_{\sigma,r}
:=
C_M\norm{x^*}_{L^\infty}
\left\{
g_M^{-1}
\left[
1+(r+C_M+u_{\max})(l_m-l_0)g_M^{-1}
\right]
+
(l_m-l_0)g_M^{-1}
\right\}.
\]
Thus
$\displaystyle 
f\mapsto\langle\sigma,R_rf\rangle$
is bounded on \(L^1(\Om)\). In particular,
$\displaystyle 
B(r)=\langle\sigma,R_r\chi\rangle$
is well-defined whenever \(\chi\in L^1(\Om)\cap L^\infty(\Om)\).
\end{proof}
\begin{proof}[Proof of Theorem~\ref{thm:main-rankone}]
Let
\[
\mathcal A_r:=\Lcal_r^*-\chi\langle\sigma,\cdot\rangle,
\qquad
\psi_r:=R_r\chi,
\qquad
\lambda_q:=R_rq,
\qquad
B(r):=\langle\sigma,\psi_r\rangle .
\]
The adjoint equation is
$\displaystyle 
\mathcal A_r\lambda=q$,
that is,
$\displaystyle 
\Lcal_r^*\lambda-\chi\langle\sigma,\lambda\rangle=q$.
Equivalently,
$\displaystyle 
\Lcal_r^*\lambda
=
q+\chi\langle\sigma,\lambda\rangle$.
Apply \(R_r=(\Lcal_r^*)^{-1}\):
$\lambda
=
R_rq
+
R_r\chi\,\langle\sigma,\lambda\rangle$.
Thus
\begin{equation}
\label{eq:proof-rankone-lambda-scalar}
\lambda
=
\lambda_q+\psi_r c,
\qquad
c:=\langle\sigma,\lambda\rangle .
\end{equation}
Pairing \eqref{eq:proof-rankone-lambda-scalar} with \(\sigma\),
$\displaystyle 
c
=
\langle\sigma,\lambda_q\rangle
+
\langle\sigma,\psi_r\rangle c
=
\langle\sigma,\lambda_q\rangle+B(r)c$.
Hence
\begin{equation}
\label{eq:proof-rankone-scalar-equation}
(1-B(r))c=\langle\sigma,\lambda_q\rangle .
\end{equation}
Assume first
$\displaystyle 
1-B(r)\ne0$.
Then
$\displaystyle 
c
=
\frac{\langle\sigma,\lambda_q\rangle}{1-B(r)}$.
Substitution into \eqref{eq:proof-rankone-lambda-scalar} yields
$\displaystyle 
\lambda
=
\lambda_q
+
\frac{\langle\sigma,\lambda_q\rangle}{1-B(r)}\psi_r$.
Equivalently,
\begin{equation}
\label{eq:proof-rankone-inverse}
\mathcal A_r^{-1}q
=
R_rq
+
\frac{\langle\sigma,R_rq\rangle}{1-\langle\sigma,R_r\chi\rangle}R_r\chi .
\end{equation}
We show uniqueness. If $\mathcal A_r\lambda=0$,
then \(q=0\), hence \(\lambda_q=0\), and
$\lambda=\psi_r c$. Pairing gives
$c=B(r)c$ and $(1-B(r))c=0$.
Since \(1-B(r)\ne0\),
$c=0$ and therefore $\lambda=0$.
Therefore the solution \eqref{eq:proof-rankone-inverse} is unique.
It remains to verify that \eqref{eq:proof-rankone-inverse} indeed solves the
equation. Let
$\displaystyle 
\lambda
=
\lambda_q
+
\frac{\langle\sigma,\lambda_q\rangle}{1-B(r)}\psi_r$.
Then $\Lcal_r^*\lambda_q=q$ and $\Lcal_r^*\psi_r=\chi$.
Also,
\[ 
\langle\sigma,\lambda\rangle
=
\langle\sigma,\lambda_q\rangle
+
\frac{\langle\sigma,\lambda_q\rangle}{1-B(r)}
\langle\sigma,\psi_r\rangle
=
\langle\sigma,\lambda_q\rangle
\left(
1+\frac{B(r)}{1-B(r)}
\right)
=
\frac{\langle\sigma,\lambda_q\rangle}{1-B(r)}.
\]
Therefore
\[
\Lcal_r^*\lambda-\chi\langle\sigma,\lambda\rangle
=
q
+
\frac{\langle\sigma,\lambda_q\rangle}{1-B(r)}\chi
-
\chi\frac{\langle\sigma,\lambda_q\rangle}{1-B(r)}
=
q.
\]
Now assume
$\displaystyle 
1-B(r)=0$.
Since
$\displaystyle 
\Lcal_r^*\psi_r=\chi$,
and
$\displaystyle 
\langle\sigma,\psi_r\rangle=B(r)=1$,
we have
$\displaystyle 
\mathcal A_r\psi_r
=
\Lcal_r^*\psi_r-\chi\langle\sigma,\psi_r\rangle
=
\chi-\chi B(r)
=
0$.
Thus
$\displaystyle 
\psi_r\in\ker\mathcal A_r$.
Moreover, the scalar compatibility condition for solvability is read from
\eqref{eq:proof-rankone-scalar-equation}:
$\displaystyle 
0\cdot c=\langle\sigma,\lambda_q\rangle$.
Hence, if \(1-B(r)=0\), a solution can exist only if
$\langle\sigma,R_rq\rangle=0$,
and, when this holds, solutions are nonunique modulo
$\displaystyle 
\operatorname{span}\{\psi_r\}$.
\end{proof}
\begin{proof}[Proof of Corollary~\ref{cor:main-switching-formula}]
For the harvesting problem,
$\displaystyle 
q=q_{\rm red}=\pi u$.
Thus
$\displaystyle 
\lambda_q
=
R_rq
=
R_r(\pi u)
=
\lambda_{\rm red}$.
By definition,
\[
A(r):=\langle\sigma,\lambda_{\rm red}\rangle,
\qquad
B(r):=\langle\sigma,\psi_r\rangle,
\qquad
\Gamma_r:=\frac{A(r)}{1-B(r)}.
\]
If
$\displaystyle 
1-B(r)\ne0$,
Theorem~\ref{thm:main-rankone} gives
\[
\lambda
=
\lambda_{\rm red}
+
\frac{\langle\sigma,\lambda_{\rm red}\rangle}{1-B(r)}\psi_r
=
\lambda_{\rm red}
+
\frac{A(r)}{1-B(r)}\psi_r
=
\lambda_{\rm red}+\Gamma_r\psi_r.
\]
The switching function is
$\displaystyle 
S(l):=\pi(l)-\lambda(l)$.
Therefore
$\displaystyle 
S(l)
=
\pi(l)-\lambda_{\rm red}(l)-\Gamma_r\psi_r(l)$.
With
$\displaystyle 
S_{\rm red}(l):=\pi(l)-\lambda_{\rm red}(l)$,
we obtain
$\displaystyle 
S(l)=S_{\rm red}(l)-\Gamma_r\psi_r(l)$.
The pointwise Hamiltonian dependence on \(u\) is affine:
\[
\mathscr H(l,u)
=
u\,S(l)+\text{terms independent of }u.
\]
Since
$\displaystyle 
0\le u(l)\le u_{\max}$,
pointwise maximization gives
$\displaystyle 
u(l)=u_{\max}
\qquad\text{if }S(l)>0,
\qquad
u(l)=0
\qquad\text{if }S(l)<0$.
On
$\displaystyle 
\{l:S(l)=0\}$,
the first-order condition is singular and the bang--bang rule is not determined
by first variation alone.
\end{proof}
\subsection{Proof of the forward--adjoint identity}
\label{subsec:proof-identity}
\begin{proof}[Proof of Theorem~\ref{thm:main-identity}]
Let
$\displaystyle 
y(l):=\partial_E x_E(l)\big|_{E=E^*}$.
Then
\[
\Phi'(E^*)
=
\frac{d}{dE}\bigg|_{E=E^*}\int_{l_0}^{l_m}\chi(l)x_E(l)\,dl
=
\int_{l_0}^{l_m}\chi(l)y(l)\,dl
=
\langle\chi,y\rangle.
\]
The stationary frozen profile satisfies
\[
\frac{d}{dl}\big(g(E,l)x_E(l)\big)
+
\big(\mu(E,l)+u(l)\big)x_E(l)
=0,
\qquad
g(E,l_0)x_E(l_0)=p .
\]
Differentiate the interior equation at \(E=E^*\):
\[
\frac{d}{dl}
\left[
(\partial_Eg)(E^*,l)x^*(l)+g^*(l)y(l)
\right]
+
(\partial_E\mu)(E^*,l)x^*(l)
+
(\mu^*(l)+u(l))y(l)
=0 .
\]
Hence
\[
\frac{d}{dl}\big(g^*y\big)
+
(\mu^*+u)y
=
-\frac{d}{dl}\big((\partial_Eg)(E^*,\cdot)x^*\big)
-
(\partial_E\mu)(E^*,\cdot)x^* .
\]
With
\[
a(l):=(\partial_Eg)(E^*,l)x^*(l),
\qquad
m(l):=(\partial_E\mu)(E^*,l)x^*(l),
\]
and
$\displaystyle 
\sigma_{\rm int}:=-a'-m$,
we get
$\displaystyle 
\Lcal_0y
:=
\frac{d}{dl}(g^*y)+(\mu^*+u)y
=
\sigma_{\rm int}$.
The flux boundary condition gives
$\displaystyle 
g(E,l_0)x_E(l_0)=p$.
Differentiating at \(E=E^*\),
\[
(\partial_Eg)(E^*,l_0)x^*(l_0)+g^*(l_0)y(l_0)=0.
\]
Since
$\displaystyle 
a_0:=(\partial_Eg)(E^*,l_0)x^*(l_0)$,
we obtain
$\displaystyle 
g^*(l_0)y(l_0)=-a_0$.
Let
$\displaystyle 
\psi_0:=R_0\chi$.
Then
$\Lcal_0^*\psi_0=\chi$ and
$\psi_0(l_m)=0$,
that is,
\[
-g^*(l)\psi_0'(l)
+
(\mu^*(l)+u(l))\psi_0(l)
=
\chi(l),
\qquad
\psi_0(l_m)=0 .
\]
Now compute the Green identity. Since
$\Lcal_0y=(g^*y)'+(\mu^*+u)y$ and
$\Lcal_0^*\psi_0=-g^*\psi_0'+(\mu^*+u)\psi_0$,
we have
\[
\begin{aligned}
(\Lcal_0y)\psi_0-y(\Lcal_0^*\psi_0)
&=
\big((g^*y)'+(\mu^*+u)y\big)\psi_0
-
y\big(-g^*\psi_0'+(\mu^*+u)\psi_0\big).\\[4pt]
&=(\Lcal_0y)\psi_0-y(\Lcal_0^*\psi_0)
=
(g^*y)'\psi_0+g^*y\psi_0'
=
(g^*y\psi_0)'.
\end{aligned}
\]
Integrating over \(\Om\) and using $\Lcal_0y=\sigma_{\rm int}$, $\Lcal_0^*\psi_0=\chi$,
\[
\int_{\Om}\sigma_{\rm int}\psi_0\,dl
-
\int_{\Om}y\chi\,dl
=\int_{l_0}^{l_m}(\Lcal_0y)\psi_0\,dl
-
\int_{l_0}^{l_m}y(\Lcal_0^*\psi_0)\,dl
=
\int_{l_0}^{l_m}(g^*y\psi_0)'\,dl
=
[g^*y\psi_0]_{l_0}^{l_m}.
\]
Since
$\displaystyle 
\int_{\Om}y\chi\,dl=\Phi'(E^*)$,
this becomes
\begin{equation}
\label{eq:proof-identity-before-boundary}
\int_{\Om}\sigma_{\rm int}\psi_0\,dl-\Phi'(E^*)
=
[g^*y\psi_0]_{l_0}^{l_m}.
\end{equation}
Evaluate the boundary term and using $\psi_0(l_m)=0$, $g^*(l_0)y(l_0)=-a_0$,
\[
[g^*y\psi_0]_{l_0}^{l_m}
=
g^*(l_m)y(l_m)\psi_0(l_m)
-
g^*(l_0)y(l_0)\psi_0(l_0)=a_0\psi_0(l_0).
\]
Substituting into \eqref{eq:proof-identity-before-boundary},
$\displaystyle 
\int_{\Om}\sigma_{\rm int}\psi_0\,dl-\Phi'(E^*)
=
a_0\psi_0(l_0)$,
hence
\begin{equation}
\label{eq:proof-identity-interior-load}
\int_{\Om}\sigma_{\rm int}\psi_0\,dl
=
\Phi'(E^*)+a_0\psi_0(l_0).
\end{equation}
The corrected feedback co-load is
$\displaystyle 
\sigma:=\sigma_{\rm int}-a_0\delta_{l_0}$.
Therefore
\[ 
B(0)
=
\langle\sigma,\psi_0\rangle
=
\langle\sigma_{\rm int},\psi_0\rangle
-
a_0\langle\delta_{l_0},\psi_0\rangle=
\int_{\Om}\sigma_{\rm int}\psi_0\,dl
-
a_0\psi_0(l_0) = \Phi'(E^*).
\]
By Lemma~\ref{lem:main-Ev-props},
$\Ev(0)=\Phi'(E^*)$, we then have
$\Ev(0)=\Phi'(E^*)=B(0)$.
Therefore
$\displaystyle 
1-\Ev(0)
=
1-\Phi'(E^*)
=
1-B(0)$.
\end{proof}
\begin{proof}[Proof of Corollary~\ref{cor:main-discount}]
For \(r\ge0\), set
$\displaystyle 
\psi_r:=R_r\chi$.
Then
$\displaystyle 
-g^*\psi_r'
+
(r+\mu^*+u)\psi_r
=
\chi$ and
$\psi_r(l_m)=0$.
The explicit formula is
$\displaystyle 
\psi_r(l)
=
\int_l^{l_m}
\frac{\chi(s)}{g^*(s)}
\exp\!\left[
-\int_l^s
\frac{r+\mu^*(\xi)+u(\xi)}{g^*(\xi)}\,d\xi
\right]ds$.
Define
\[
\Theta(l,s):=\int_l^s\frac{\mu^*(\xi)+u(\xi)}{g^*(\xi)}\,d\xi,
\qquad
T(l,s):=\int_l^s\frac{d\xi}{g^*(\xi)}.
\]
Then
$\displaystyle 
\psi_r(l)
=
\int_l^{l_m}
\frac{\chi(s)}{g^*(s)}
e^{-\Theta(l,s)}
e^{-rT(l,s)}
\,ds$,
and
$\displaystyle 
\psi_0(l)
=
\int_l^{l_m}
\frac{\chi(s)}{g^*(s)}
e^{-\Theta(l,s)}
\,ds$.
Hence
\[
\psi_r(l)-\psi_0(l)
=
\int_l^{l_m}
\frac{\chi(s)}{g^*(s)}
e^{-\Theta(l,s)}
\big(e^{-rT(l,s)}-1\big)
\,ds.
\]
Since
$\displaystyle 
0\le T(l,s)\le\frac{s-l}{g_M}\le\frac{l_m-l_0}{g_M}$,
and
$\displaystyle 
\abs{e^{-rT(l,s)}-1}
\le rT(l,s)
\le
r\frac{l_m-l_0}{g_M}$,
we obtain
\[ 
\abs{\psi_r(l)-\psi_0(l)}
\le
r\frac{l_m-l_0}{g_M}
\int_l^{l_m}
\frac{\abs{\chi(s)}}{g^*(s)}
e^{-\Theta(l,s)}\,ds.
\]
Using \(g^*\ge g_M\) and \(e^{-\Theta(l,s)}\le1\),
$\displaystyle 
\abs{\psi_r(l)-\psi_0(l)}
\le
r\frac{l_m-l_0}{g_M}
\cdot
g_M^{-1}\norm{\chi}_{L^1(\Om)}$.
Therefore
\[
\norm{\psi_r-\psi_0}_{L^\infty}
\le
C_0r,
\qquad
C_0:=
(l_m-l_0)g_M^{-2}\norm{\chi}_{L^1}.
\]
For the derivative estimate, subtract the equations
\[
-g^*(\psi_r-\psi_0)'
+
(\mu^*+u)(\psi_r-\psi_0)
=
-r\psi_r.
\]
Thus
$\displaystyle 
(\psi_r-\psi_0)'
=
\frac{\mu^*+u}{g^*}(\psi_r-\psi_0)
+
\frac{r}{g^*}\psi_r$.
Hence
\[
\abs{(\psi_r-\psi_0)'}
\le
g_M^{-1}(C_M+u_{\max})\abs{\psi_r-\psi_0}
+
rg_M^{-1}\abs{\psi_r}.
\]
Also
$\displaystyle 
\norm{\psi_r}_{L^\infty}
\le
g_M^{-1}\norm{\chi}_{L^1}$.
Therefore
\[
\norm{(\psi_r-\psi_0)'}_{L^1}
\le
(l_m-l_0)g_M^{-1}(C_M+u_{\max})\norm{\psi_r-\psi_0}_{L^\infty}
+
r(l_m-l_0)g_M^{-2}\norm{\chi}_{L^1}.
\]
Using \(\norm{\psi_r-\psi_0}_{L^\infty}\le C_0r\),
$\displaystyle 
\norm{(\psi_r-\psi_0)'}_{L^1}
\le
C_1r$,
where
$\displaystyle 
C_1:=
(l_m-l_0)g_M^{-1}(C_M+u_{\max})C_0
+
(l_m-l_0)g_M^{-2}\norm{\chi}_{L^1}$.
Thus
$\displaystyle 
\norm{\psi_r-\psi_0}_{W^{1,1}}
\le
(C_0(l_m-l_0)+C_1)r
=
C_\psi r$.
Since \(\sigma\) is bounded on \(W^{1,1}(\Om)\),
\[
\abs{\langle\sigma,\lambda\rangle}
\le C_\sigma\norm{\lambda}_{W^{1,1}},
\qquad
\lambda\in W^{1,1}(\Om),\ \lambda(l_m)=0.
\]
Therefore
\[
\abs{B(r)-B(0)}
=
\abs{\langle\sigma,\psi_r\rangle-\langle\sigma,\psi_0\rangle}
=
\abs{\langle\sigma,\psi_r-\psi_0\rangle}
\le
C_\sigma\norm{\psi_r-\psi_0}_{W^{1,1}}
\le
C_\sigma C_\psi r .
\]
Hence
$\displaystyle 
B(r)=B(0)+\mathcal O(r)
\qquad(r\downarrow0)$.
Using Theorem~\ref{thm:main-identity},
$\displaystyle 
B(0)=\Phi'(E^*)$,
and therefore
$\displaystyle 
B(r)=\Phi'(E^*)+\mathcal O(r)$.
\end{proof}
\subsection{Proofs for switching geometry and diagnostics}
\label{subsec:proof-switching-diagnostics}
\begin{proof}[Proof of Proposition~\ref{prop:main-no-exterior}]
Recall
$\displaystyle 
S(l)=S_{\rm red}(l)-\Gamma_r\psi_r(l)$,
$\displaystyle 
\mathcal N_\delta
:=
(l_{\rm red}-\delta,l_{\rm red}+\delta)\cap\Om$,
and
$\displaystyle 
m_{\rm out}(\delta)
:=
\inf_{\Om\setminus\mathcal N_\delta}
\abs{S_{\rm red}(l)}$.
For every \(l\in\Om\setminus\mathcal N_\delta\),
$\displaystyle 
\abs{S_{\rm red}(l)}\ge m_{\rm out}(\delta)$.
Hence
$\displaystyle 
\abs{S(l)}
=
\abs{S_{\rm red}(l)-\Gamma_r\psi_r(l)}
\ge
\abs{S_{\rm red}(l)}
-
\abs{\Gamma_r}\abs{\psi_r(l)}$.
Using
$\displaystyle 
\abs{\psi_r(l)}\le\norm{\psi_r}_{L^\infty(\Om)}$,
we get
$\displaystyle 
\abs{S(l)}
\ge
m_{\rm out}(\delta)
-
\abs{\Gamma_r}\norm{\psi_r}_{L^\infty(\Om)}$.
The hypothesis
$\displaystyle 
\abs{\Gamma_r}\norm{\psi_r}_{L^\infty(\Om)}
<
m_{\rm out}(\delta)$
implies
\[
\abs{S(l)}>0
\qquad
\forall l\in\Om\setminus\mathcal N_\delta .
\]
Therefore
$\displaystyle 
Z(S):=\{l\in\Om:S(l)=0\}
\subset
\mathcal N_\delta$.
\end{proof}
\begin{proof}[Proof of Proposition~\ref{prop:main-single-threshold}]
By Proposition~\ref{prop:main-no-exterior},
$\displaystyle 
Z(S)\subset\mathcal N_\delta$.
On \(\mathcal N_\delta\),
$\displaystyle 
S'(l)=S_{\rm red}'(l)-\Gamma_r\psi_r'(l)$.
Define
$\displaystyle 
m_1(\delta)
:=
\inf_{l\in\mathcal N_\delta}
\abs{S_{\rm red}'(l)}$.
For every \(l\in\mathcal N_\delta\),
$\displaystyle 
\abs{S'(l)}
\ge
\abs{S_{\rm red}'(l)}
-
\abs{\Gamma_r}\abs{\psi_r'(l)}
\ge
m_1(\delta)
-
\abs{\Gamma_r}\norm{\psi_r'}_{L^\infty(\Om)}$.
The second margin condition
$\displaystyle 
\abs{\Gamma_r}\norm{\psi_r'}_{L^\infty(\Om)}
<
m_1(\delta)$
gives
\[
\abs{S'(l)}>0
\qquad
\forall l\in\mathcal N_\delta .
\]
Since \(S'\) is continuous and cannot vanish on the connected interval
\(\mathcal N_\delta\), its sign is constant:
$\displaystyle 
S'(l)>0\ \forall l\in\mathcal N_\delta$
 or 
$S'(l)<0\ \forall l\in\mathcal N_\delta$.
Thus,
$\displaystyle S\ \text{is strictly monotone on }\mathcal N_\delta$.
Therefore
$\displaystyle 
\#Z(S\vert_{\mathcal N_\delta})\le1$.
Since
$\displaystyle 
Z(S)\subset\mathcal N_\delta$,
we obtain
$\displaystyle 
\#Z(S)\le1$.
If, in addition,
$\displaystyle 
S(l_0)S(l_m)<0$,
then by continuity there exists
$\displaystyle 
l_*\in(l_0,l_m)$
such that $\displaystyle S(l_*)=0$.
Since all zeros lie in \(\mathcal N_\delta\),
$\displaystyle 
l_*\in\mathcal N_\delta$.
Hence
$\displaystyle 
Z(S)=\{l_*\}$.
The bang--bang rule is therefore
$\displaystyle 
u(l)=u_{\max}\mathbf 1_{\{S(l)>0\}}$,
with exactly one switching threshold \(l_*\).
\end{proof}
\begin{proof}[Proof of Corollary~\ref{cor:main-switch-shift}]
Let
\[
l_*=l_{\rm red}+\Delta l,
\qquad
S(l_*)=0,
\qquad
S_{\rm red}(l_{\rm red})=0.
\]
Then
$\displaystyle 
0
=
S(l_*)
=
S_{\rm red}(l_{\rm red}+\Delta l)
-
\Gamma_r\psi_r(l_{\rm red}+\Delta l)$.
Assume
$\displaystyle 
S_{\rm red},\psi_r\in C^2$
near $\displaystyle l_{\rm red}$.
Taylor expansion gives
$\displaystyle 
S_{\rm red}(l_{\rm red}+\Delta l)
=
S_{\rm red}(l_{\rm red})
+
S_{\rm red}'(l_{\rm red})\Delta l
+
\frac12S_{\rm red}''(l_{\rm red})(\Delta l)^2
+
o((\Delta l)^2)$,
and
$\displaystyle 
\psi_r(l_{\rm red}+\Delta l)
=
\psi_r(l_{\rm red})
+
\psi_r'(l_{\rm red})\Delta l
+
O((\Delta l)^2)$.
Since
$\displaystyle 
S_{\rm red}(l_{\rm red})=0$,
we get
\[
0
=
S_{\rm red}'(l_{\rm red})\Delta l
+
\frac12S_{\rm red}''(l_{\rm red})(\Delta l)^2
+
o((\Delta l)^2)
-
\Gamma_r\psi_r(l_{\rm red})
-
\Gamma_r\psi_r'(l_{\rm red})\Delta l
+
O(\Gamma_r(\Delta l)^2).
\]
Equivalently,
\[
\big(S_{\rm red}'(l_{\rm red})
-
\Gamma_r\psi_r'(l_{\rm red})\big)\Delta l
=
\Gamma_r\psi_r(l_{\rm red})
-
\frac12S_{\rm red}''(l_{\rm red})(\Delta l)^2
+
o((\Delta l)^2)
+
O(\Gamma_r(\Delta l)^2).
\]
Because
$\displaystyle 
S_{\rm red}'(l_{\rm red})\ne0$,
the implicit function theorem applied to
$\displaystyle 
F(l,\Gamma):=S_{\rm red}(l)-\Gamma\psi_r(l)$
at
$\displaystyle 
(l,\Gamma)=(l_{\rm red},0)$
gives
$\displaystyle 
\Delta l=O(\Gamma_r)$.
Therefore
$\displaystyle 
(\Delta l)^2=O(\Gamma_r^2)$ and
$\displaystyle 
\Gamma_r\Delta l=O(\Gamma_r^2)$.
Keeping first-order terms,
$\displaystyle 
S_{\rm red}'(l_{\rm red})\Delta l
=
\Gamma_r\psi_r(l_{\rm red})
+
O(\Gamma_r^2)$.
Hence
$\displaystyle 
\Delta l
=
\frac{\Gamma_r\psi_r(l_{\rm red})}
     {S_{\rm red}'(l_{\rm red})}
+
O(\Gamma_r^2)$.
Thus
$\displaystyle 
l_*
=
l_{\rm red}
+
\frac{\Gamma_r\psi_r(l_{\rm red})}
     {S_{\rm red}'(l_{\rm red})}
+
O(\Gamma_r^2)$.
\end{proof}
\begin{proof}[Proof of Proposition~\ref{prop:main-window}]
A switching-window creation or annihilation occurs through a non-simple zero.
Thus for some \(l_c\in(l_0,l_m)\),
\[
S(l_c)=0,
\qquad
S'(l_c)=0.
\]
Since
$\displaystyle 
S(l)=S_{\rm red}(l)-\Gamma_r\psi_r(l)$,
and
$\displaystyle 
S'(l)=S_{\rm red}'(l)-\Gamma_r\psi_r'(l)$,
the double-zero system is
\[
S_{\rm red}(l_c)-\Gamma_r\psi_r(l_c)=0,
\qquad
S_{\rm red}'(l_c)-\Gamma_r\psi_r'(l_c)=0.
\]
Equivalently,
\[
S_{\rm red}(l_c)=\Gamma_r\psi_r(l_c),
\qquad
S_{\rm red}'(l_c)=\Gamma_r\psi_r'(l_c).
\]
If
$\displaystyle 
\psi_r(l_c)\psi_r'(l_c)\ne0$,
then
$\displaystyle 
\Gamma_r
=
\frac{S_{\rm red}(l_c)}{\psi_r(l_c)}
=
\frac{S_{\rm red}'(l_c)}{\psi_r'(l_c)}$.
Eliminating \(\Gamma_r\) yields
$\displaystyle 
S_{\rm red}(l_c)\psi_r'(l_c)
-
S_{\rm red}'(l_c)\psi_r(l_c)
=
0$.
The same equation follows without division by cross-multiplication:
$\displaystyle 
S_{\rm red}\psi_r'-S_{\rm red}'\psi_r=0$.
If additionally
$\displaystyle 
S''(l_c)
=
S_{\rm red}''(l_c)-\Gamma_r\psi_r''(l_c)
\ne0$,
then
\[
S(l)
=
\frac12S''(l_c)(l-l_c)^2
+
S_\Gamma(l_c)(\Gamma-\Gamma_c)
+
o((l-l_c)^2+\abs{\Gamma-\Gamma_c}),
\]
where
$\displaystyle 
S_\Gamma(l_c)=-\psi_r(l_c)$.
Thus, if
$\displaystyle 
S''(l_c)\psi_r(l_c)\ne0$,
the local zero set has the fold form
$\displaystyle 
(l-l_c)^2
=
\frac{2\psi_r(l_c)}{S''(l_c)}
(\Gamma-\Gamma_c)
+
o(\abs{\Gamma-\Gamma_c})$,
so two simple zeros exist on one side of \(\Gamma_c\) and no nearby zeros exist on
the other side. Hence a switching window is created or annihilated at tangency.
\end{proof}
\begin{proof}[Proof of Proposition~\ref{prop:main-closure-residual}]
Let
$\displaystyle 
H(E):=E-\Phi(E)$ and
$\displaystyle 
\rho_0:=\abs{H(\tilde E)}
=
\abs{\tilde E-\Phi(\tilde E)}$.
Assume for $E\in I$,
$H'(E)\ge m>0$, then \(H\) is strictly increasing on \(I\).
Define
$\displaystyle 
E_-:=\tilde E-\frac{\rho_0}{m}$ and
$\displaystyle 
E_+:=\tilde E+\frac{\rho_0}{m}$.
By hypothesis,
$\displaystyle 
[E_-,E_+]\subset I$.
For the right endpoint,
$\displaystyle 
H(E_+)-H(\tilde E)
=
\int_{\tilde E}^{E_+}H'(s)\,ds
\ge
m(E_+-\tilde E)
=
m\frac{\rho_0}{m}
=
\rho_0$.
Hence
$\displaystyle 
H(E_+)\ge H(\tilde E)+\rho_0$.
For the left endpoint,
$\displaystyle 
H(\tilde E)-H(E_-)
=
\int_{E_-}^{\tilde E}H'(s)\,ds
\ge
m(\tilde E-E_-)
=
\rho_0$,
so
$\displaystyle 
H(E_-)\le H(\tilde E)-\rho_0$.
If
$\displaystyle 
H(\tilde E)\ge0$,
then
$\displaystyle 
\rho_0=H(\tilde E)$,
and
\[
H(E_-)\le H(\tilde E)-\rho_0=0,
\qquad
H(\tilde E)\ge0.
\]
Thus
$\displaystyle 
H(E_-)\le0\le H(\tilde E)$.
By continuity, there exists
$\displaystyle 
E^*\in[E_-,\tilde E]$
with
$\displaystyle 
H(E^*)=0$.
If
$\displaystyle 
H(\tilde E)\le0$,
then
$\displaystyle 
\rho_0=-H(\tilde E)$,
and
\[
H(E_+)\ge H(\tilde E)+\rho_0=0,
\qquad
H(\tilde E)\le0.
\]
Thus
$\displaystyle 
H(\tilde E)\le0\le H(E_+)$.
By continuity, there exists
$\displaystyle 
E^*\in[\tilde E,E_+]$
with
$\displaystyle 
H(E^*)=0$.
In both cases,
$\displaystyle 
E^*\in[E_-,E_+]$ and
$\displaystyle 
H(E^*)=0$.
Strict monotonicity gives uniqueness. Finally,
$\displaystyle 
\abs{E^*-\tilde E}
\le
\max\{\tilde E-E_-,E_+-\tilde E\}
=
\frac{\rho_0}{m}
=
\frac{\abs{\tilde E-\Phi(\tilde E)}}{m}$.
\end{proof}
\begin{proof}[Explanation of Definition~\ref{def:main-FA-defect}]
The exact forward--adjoint identity is
$\displaystyle 
B(0)-\Phi'(E^*)=0$.
Thus
$\displaystyle 
\mathcal D_{\rm FA}:=B(0)-\Phi'(E^*)=0$.
A discretization gives two computable quantities
$B^h(0)$,
$\Phi_h'(E^*)$,
and the defect
$\displaystyle 
\mathcal D_{\rm FA}^h
:=
B^h(0)-\Phi_h'(E^*)$.
Insert and subtract exact quantities:
\[ 
\mathcal D_{\rm FA}^h=
B^h(0)-\Phi_h'(E^*)
-
\big(B(0)-\Phi'(E^*)\big)=
\big(B^h(0)-B(0)\big)
-
\big(\Phi_h'(E^*)-\Phi'(E^*)\big).
\]
Therefore
\[ 
\abs{\mathcal D_{\rm FA}^h}
\le
\abs{B^h(0)-B(0)}
+
\abs{\Phi_h'(E^*)-\Phi'(E^*)}.
\]
However, the reverse estimate does not follow: the two errors may cancel. For
example, if
$\displaystyle 
B^h(0)-B(0)=\varepsilon$ and
$\displaystyle 
\Phi_h'(E^*)-\Phi'(E^*)=\varepsilon$,
then
$\displaystyle 
\mathcal D_{\rm FA}^h=0$
although both component errors are nonzero. Hence
$\mathcal D_{\rm FA}^h$
is a two-solver consistency defect, not a rigorous a posteriori error bound
unless supplemented by independent reliability estimates for
$\displaystyle 
\abs{B^h(0)-B(0)}$
and
$\abs{\Phi_h'(E^*)-\Phi'(E^*)}$.
\end{proof}
\begin{proof}[Proof of Proposition~\ref{prop:main-switching-certificate}]
Let the computed switching function be
$\displaystyle 
\widehat S(l)
:=
\widehat S_{\rm red}(l)-\widehat\Gamma\widehat\psi(l)$,
where
$\displaystyle 
\widehat\Gamma
=
\frac{\widehat A}{1-\widehat B}$ and
$\widehat\Delta:=1-\widehat B\ne0$.
Define
$\displaystyle 
\widehat\rho_\psi
:=
\abs{\widehat\Gamma}
\norm{\widehat\psi}_{L^\infty(\Om)}$ and
$\widehat\rho_{\psi,1}
:=
\abs{\widehat\Gamma}
\norm{\widehat\psi'}_{L^\infty(\Om)}$.
Let
$\displaystyle 
\widehat m_{\rm out}(\delta)
:=
\inf_{\Om\setminus\mathcal N_\delta}
\abs{\widehat S_{\rm red}(l)}$ and
$\displaystyle 
\widehat m_1(\delta)
:=
\inf_{\mathcal N_\delta}
\abs{\widehat S_{\rm red}'(l)}$.
If
$\displaystyle 
\widehat\rho_\psi<\widehat m_{\rm out}(\delta)$,
then for \(l\in\Om\setminus\mathcal N_\delta\),
\[
\abs{\widehat S(l)}
\ge
\abs{\widehat S_{\rm red}(l)}
-
\abs{\widehat\Gamma}\abs{\widehat\psi(l)}
\ge
\widehat m_{\rm out}(\delta)-\widehat\rho_\psi>0.
\]
Thus
$\displaystyle 
Z(\widehat S)
\subset
\mathcal N_\delta$.
If also
$\displaystyle 
\widehat\rho_{\psi,1}<\widehat m_1(\delta)$,
then for \(l\in\mathcal N_\delta\),
\[
\abs{\widehat S'(l)}
=
\abs{\widehat S_{\rm red}'(l)-\widehat\Gamma\widehat\psi'(l)}
\ge
\widehat m_1(\delta)-\widehat\rho_{\psi,1}>0.
\]
Hence \(\widehat S\) is strictly monotone on \(\mathcal N_\delta\), and therefore
$\displaystyle 
\# Z(\widehat S)\le1$.
If the endpoint signs satisfy
$\displaystyle 
\widehat S(l_0)\widehat S(l_m)<0$,
then
$\displaystyle 
\#Z(\widehat S)=1$.
Thus the computed policy is single-threshold, subject to the verified accuracy of
$\widehat B$,
$\widehat A$, 
$\widehat\psi$, 
$\widehat\psi'$,
and
$\widehat S_{\rm red}$.
If the margin inequalities fail, search for a computed double-zero condition:
$\widehat S(l)=0$ and
$\widehat S'(l)=0$.
Since
$\displaystyle \widehat S=\widehat S_{\rm red}-\widehat\Gamma\widehat\psi$ and
$\displaystyle \widehat S'=\widehat S_{\rm red}'-\widehat\Gamma\widehat\psi'$,
the double-zero condition is
\[
\widehat S_{\rm red}(l)=\widehat\Gamma\widehat\psi(l),
\qquad
\widehat S_{\rm red}'(l)=\widehat\Gamma\widehat\psi'(l).
\]
Eliminating \(\widehat\Gamma\) gives
$\displaystyle 
\widehat S_{\rm red}(l)\widehat\psi'(l)
-
\widehat S_{\rm red}'(l)\widehat\psi(l)
=0$.
Therefore a solution of
$\displaystyle 
\widehat S_{\rm red}\widehat\psi'
-
\widehat S_{\rm red}'\widehat\psi
=0$
marks a candidate tangency, i.e. creation or annihilation of a switching window.
\end{proof}
\section{Conclusion}
\label{sec:conclusion}
We have analyzed a closed-loop size-structured transport equation with flux
recruitment and endogenous scalar feedback $E=\langle\chi,x\rangle$. The analysis
rests on four scalar objects,
\[
E=\Kcal(E),\qquad E=\Phi(E),\qquad \Ev(0)=\Phi'(E^*)=B(0),
\qquad
S=S_{\rm red}-\frac{A(r)}{1-B(r)}\psi_r ,
\]
namely the Volterra closure, whose contraction gives well-posedness on the
intrinsic interval $[0,M_T]$; the stationary closure, whose margin
$1-\Phi'(E^*)$ controls uniqueness and, at $\Phi'(E^*)=1$, a nondegenerate fold;
the zero-discount identity tying the closure slope, the renewal characteristic
value $\Ev(0)$, and the rank-one adjoint gain $B(0)$; and the switching function,
whose sensitivity is governed by the same denominator $1-B(r)$. 
The scalar quantity \(\Phi'(E^*)\) detects only the root at \(\lambda=0\):
$\mathcal E(0)=\Phi'(E^*)$.
Thus
$\Phi'(E^*)=1$
is precisely the condition for a zero characteristic root. Away from this
crossing, stability is governed by the full root set
$\{\lambda\in\mathbb C:\mathcal E(\lambda)=1\}$.

Three limitations delimit the results. Nonlinear asymptotic stability is
established conditionally on the quadratic input--output remainder estimate
$\norm{\Gcal[e,v_0]}_\omega\le C(\norm{v_0}_{BV}+\norm{e}_\omega)^2$, which is
verified in Appendix~\ref{app:nonlinear} under the second-order regularity
\eqref{eq:model-C2E}. The harvesting analysis is a first-order stationary
optimality theory; existence of a global optimizer, sufficiency, and dynamic
synthesis are not treated \cite{anita2013analysis,yu2026chemotactic}. Finally, the forward--adjoint defect
$\mathcal D_{\rm FA}^h=B^h(0)-\Phi_h'(E^*)$ is a two-solver consistency residual,
not a certified error bound: as in a posteriori error analysis
\cite{verfurth2013posteriori,yu2026age}, reliability requires independent stability estimates.

Several extensions are natural: vector feedback $E\in\R^k$, where the scalar
denominator $1-B(r)$ is replaced by a determinant $\det(I-\mathbf B(r))$; fully
dynamic optimal control with a time-dependent adjoint system; and certified
enclosures for the roots of $H(E)=0$, $\Ev(\lambda)=1$, and $1-B(r)=0$. These
would connect the present scalar feedback mechanism to the broader measure-valued
and multiscale structured-population frameworks
\cite{michel2005general,thieme2018mathematics,yu2026fullcovariance}.

\appendix
\section{Transport estimates for the frozen problem}
\label{app:transport}
Throughout this appendix, Assumptions~\ref{ass:model-coefficients}--\ref{ass:model-data}
are in force, and $E\in\Bt$ is a fixed feedback path with $0\le E(t)\le M_T$. We
write
\[
g_E(t,l):=g(E(t),l),\qquad
\mu_E(t,l):=\mu(E(t),l),\qquad
c_E(t,l):=\mu_E(t,l)+u(t,l),
\]
so that $g_E\ge g_M$, $0\le c_E\le\Theta_M$, and $\abs{\partial_l g_E}\le C_M$ by
\eqref{eq:model-basic-bounds}.

\subsection{Characteristics and a priori bounds}
\label{subsec:app-characteristics}

For $t\ge0$ and $l\in\Om$, let $s\mapsto L(s;t,l)$ solve the characteristic ODE
\begin{equation}
\label{eq:app-char}
\frac{d}{ds}L(s;t,l)=g_E\big(s,L(s;t,l)\big),
\qquad
L(t;t,l)=l .
\end{equation}
Since $g_E\ge g_M>0$, characteristics are strictly increasing in $s$, do not
cross, and cover $[0,T]\times\Om$. Writing the equation in nonconservative form
\eqref{eq:model-nonconservative}, along a characteristic reaching $(t,l)$ from the
inflow boundary at time $t_0(t,l)$ (when the foot lies on $\{l=l_0\}$) or from the
initial slice (when the foot lies on $\{s=0\}$), the solution admits the
representation
\begin{equation}
\label{eq:app-rep}
x_E(t,l)=
\begin{cases}
\dfrac{p(t_0)}{g_E(t_0,l_0)}\,
\exp\!\Big[-\displaystyle\int_{t_0}^{t}
\big(\partial_l g_E+c_E\big)\!\big(s,L(s)\big)\,ds\Big],
& \text{inflow foot},\\[1.6em]
x_0(l_0^\sharp)\,
\exp\!\Big[-\displaystyle\int_{0}^{t}
\big(\partial_l g_E+c_E\big)\!\big(s,L(s)\big)\,ds\Big],
& \text{initial foot},
\end{cases}
\end{equation}
where $L(s)=L(s;t,l)$ and $l_0^\sharp=L(0;t,l)$. Formula \eqref{eq:app-rep}
defines the unique weak solution $x_E=\Tcal_E(x_0,p,u)$, and it is nonnegative
because $x_0\ge0$, $p\ge0$, $g_E>0$.

\begin{lemma}[Mass and sup bounds]
\label{lem:app-mass-sup}
For a.e.\ $t\in[0,T]$,
\begin{equation}
\label{eq:app-L1}
\norm{x_E(t,\cdot)}_{L^1(\Om)}\le \norm{x_0}_{L^1(\Om)}+\int_0^t p(s)\,ds
=:M_x(t),
\end{equation}
and
\begin{equation}
\label{eq:app-Linf}
\norm{x_E(t,\cdot)}_{L^\infty(\Om)}\le
e^{C_M t}\max\Big\{\norm{x_0}_{L^\infty(\Om)},\
\tfrac{\norm{p}_{L^\infty(0,T)}}{g_M}\Big\}=:K_\infty .
\end{equation}
\end{lemma}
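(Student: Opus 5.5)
The mass bound \eqref{eq:app-L1} will come from testing the weak formulation against a cutoff in time, and the sup bound \eqref{eq:app-Linf} directly from the characteristic representation \eqref{eq:app-rep}. Both use only $g_E\ge g_M$, $c_E\ge0$ and $\abs{\partial_l g_E}\le C_M$, taken from \eqref{eq:model-basic-bounds}.

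\emph{Mass bound.} We first argue for smooth data, i.e.\ $x_0\in C^1$ and $p$, $g_E$, $c_E$ smooth in $t$. Then \eqref{eq:app-rep} is a classical solution on each side of the separating characteristic issued from $(0,l_0)$, and it has a flux-continuous interface there. Integrating the conservative equation over $\Om$ gives
\[
\frac{d}{dt}\int_{l_0}^{l_m}x_E(t,l)\,dl
= p(t)-g_E(t,l_m)x_E(t,l_m)-\int_{l_0}^{l_m}c_E(t,l)x_E(t,l)\,dl .
\]
Since $x_E\ge0$, $g_E>0$ and $c_E\ge0$, the last two terms are nonpositive. Integrating in time then yields \eqref{eq:app-L1}.

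For general data there are two routes. The first is to choose $\varphi(t,l)=\theta_\varepsilon(t)$ in \eqref{eq:model-weak-form}, with $\theta_\varepsilon$ a smooth approximation of $\mathbf 1_{[0,t_1]}$. The constraint $\varphi(t,l_m)=0$ fails for this choice, so we would also multiply by a spatial cutoff $\zeta_\delta(l)$ that decreases to $0$ near $l_m$. Since $\zeta_\delta'\le0$, the term $x\,g_E\,\theta_\varepsilon\zeta_\delta'$ has a favourable sign, and letting $\delta,\varepsilon\to0$ at Lebesgue points $t_1$ gives the inequality. The second route is to approximate $x_0$ in $L^1$ by smooth nonnegative functions and use $L^1$-stability of \eqref{eq:app-rep} along the same characteristics. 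I would use the cutoff argument, because it needs no regularity of the solution.

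\emph{Sup bound.} This follows pointwise from \eqref{eq:app-rep}. Since $c_E\ge0$ and $-\partial_l g_E\le C_M$, the exponential factor is at most $e^{C_M(t-t_0)}\le e^{C_M t}$ for an inflow foot, and at most $e^{C_M t}$ for an initial foot. The prefactor is bounded by $\norm{p}_{L^\infty}/g_M$ in the first case and by $\norm{x_0}_{L^\infty}$ in the second. Taking the maximum over the two cases gives \eqref{eq:app-Linf}; note that $e^{C_M t}\le e^{C_M T}$, consistent with the definition of $K_\infty$. The only subtlety is that $x_0$ is only in $L^\infty$, so the bound holds a.e. For this we use that the foot map $l\mapsto L(0;t,l)$ is bi-Lipschitz: it is the flow of a vector field Lipschitz in $l$. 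Hence null sets are preserved and the pointwise bound transfers to an essential-sup bound.

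\emph{Main obstacle.} I expect the main obstacle to be the justification of the mass inequality at the weak level. The boundary outflow term has no trace a priori, so the sign argument has to go through the test-function cutoff rather than a literal integration by parts. Controlling $\int x_E\, g_E\,\theta_\varepsilon\,\zeta_\delta'$ is exactly where nonnegativity of $x_E$ is used, and I would handle it by monotone convergence as $\delta\to0$.
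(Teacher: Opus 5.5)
Your proposal is correct and takes the same route as the paper. For the mass bound you integrate the frozen equation over $\Om$, using the inflow flux $g_E(t,l_0)x_E(t,l_0)=p(t)$ and the signs of the outflow and removal terms; for the sup bound you estimate the characteristic representation \eqref{eq:app-rep} by its prefactor times $e^{C_M t}$. Your cutoff test function in the weak formulation and your bi-Lipschitz foot-map remark make rigorous what the paper does only formally. You also state the exponent estimate in the right direction: the paper writes ``bounded below by $-C_Mt$'' where it means bounded above by $C_Mt$.
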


\begin{proof}
Integrating \eqref{eq:model-frozen-state} over $\Om$ and using the inflow flux
$g_E(t,l_0)x_E(t,l_0)=p(t)$ and $g_E(t,l_m)x_E(t,l_m)\ge0$,
\[
\frac{d}{dt}\int_\Om x_E\,dl
=p(t)-g_E(t,l_m)x_E(t,l_m)-\int_\Om c_E x_E\,dl
\le p(t),
\]
which integrates to \eqref{eq:app-L1}. For \eqref{eq:app-Linf}, in
\eqref{eq:app-rep} the exponent is bounded below by $-\int_0^t C_M\,ds=-C_Mt$
because $c_E\ge0$ and $\abs{\partial_l g_E}\le C_M$; the prefactor is at most
$\max\{\norm{x_0}_\infty,\norm{p}_\infty/g_M\}$ since $g_E(t_0,l_0)\ge g_M$. Hence
$\abs{x_E(t,l)}\le e^{C_Mt}\max\{\norm{x_0}_\infty,\norm{p}_\infty/g_M\}$.
\end{proof}

\begin{lemma}[$BV$ propagation]
\label{lem:app-bv}
There is a constant $C_{BV}=C_{BV}(C_M,C_2,\Theta_M,g_M,T,\Lambda^*,\Om)$,
nondecreasing in $T$, such that if $E\in\Btl$ then
\begin{equation}
\label{eq:app-BV}
\sup_{0\le t\le T}\abs{x_E(t,\cdot)}_{BV(\Om)}\le C_{BV}.
\end{equation}
\end{lemma}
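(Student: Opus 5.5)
The plan is to propagate a bound on the spatial total variation of \(x_E(t,\cdot)\) along the characteristic flow. For smooth data we derive a Grönwall inequality for \(\abs{x_E(t,\cdot)}_{BV}\), and then pass to \(BV\) data by approximation together with lower semicontinuity of the total variation. We mollify \(x_0\) and adjust it near \(l_0\) so that it is compatible with the inflow flux; we also regularize \(p\) and the coefficients. For these approximations the representation \eqref{eq:app-rep} gives a Lipschitz solution, and the \(L^1\) stability implicit in \eqref{eq:app-rep} lets us pass to the limit, with the limit bound following from lower semicontinuity of \(\abs{\cdot}_{BV}\). The regularization is chosen so that every constant below depends only on \(C_M,C_2,\Theta_M,g_M,T,\Lambda^*,\Om\), uniformly in the approximation parameter.

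For smooth data we differentiate the nonconservative form \eqref{eq:model-nonconservative} in \(l\). Setting \(w:=\partial_l x_E\), this gives
\[
\partial_t w+g_E\partial_l w=-\big(2\partial_l g_E+c_E\big)w-\big(\partial_{ll}g_E+\partial_l c_E\big)x_E .
\]
We multiply by \(\operatorname{sgn}(w)\), integrate over \(\Om\), and integrate the transport term by parts. This produces
\[
\frac{d}{dt}\int_\Om\abs{w}\,dl\le -g_E(t,l_m)\abs{w(t,l_m)}+g_E(t,l_0)\abs{w(t,l_0)}+C\int_\Om\abs{w}\,dl+C\,K_\infty(l_m-l_0).
\]
Here \(C\) depends on \(C_M\) (through \(\partial_lg_E\)) and on \(C_2\), since \eqref{eq:model-ass-second} bounds \(\partial_{ll}g_E\) and \(\partial_l\mu_E\) and Assumption~\ref{ass:model-data} bounds \(\partial_l u\). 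The zeroth-order source term is handled with the sup bound of Lemma~\ref{lem:app-mass-sup}. The outflow boundary term is nonpositive and is dropped.

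The main obstacle is the inflow term \(g_E(t,l_0)\abs{w(t,l_0)}\), and this is exactly where \(E\in\Btl\) enters. On \(l=l_0\) the equation itself expresses \(\partial_l x_E\) through \(\partial_t x_E\). Since \(x_E(t,l_0)=p(t)/g(E(t),l_0)\), we have
\[
g_E w=-\partial_t x_E-(\partial_lg_E+c_E)x_E \quad\text{at } l=l_0,
\qquad
\partial_t x_E(t,l_0)=\frac{\dot p}{g}-\frac{p\,\partial_Eg\,\dot E}{g^2}.
\]
Because \(p\in W^{1,\infty}\) and \(\abs{\dot E}\le\Lambda^*\), the boundary term is bounded by a constant depending on \(\norm{p}_{W^{1,\infty}}\), \(g_M\), \(C_M\), \(\Lambda^*\) and \(K_\infty\). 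This is the only place the Lipschitz bound on \(E\) is used, and it explains the dependence of \(C_{BV}\) on \(\Lambda^*\). Without it, the inflow trace could oscillate and generate arbitrary variation.

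Grönwall's inequality then gives
\[
\abs{x_E(t)}_{BV}\le e^{Ct}\big(\abs{x_0}_{BV}+Ct\big),
\]
which is nondecreasing in \(T\). To this we add the jump at the corner \((0,l_0)\) between the incoming boundary value and \(x_0(l_0)\), which is at most \(\norm{x_0}_\infty+\norm{p}_\infty/g_M\) and is transported with a bounded amplification factor \(e^{C_Mt}\). This yields \eqref{eq:app-BV}.
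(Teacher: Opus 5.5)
Your argument is correct, but it reaches the bound by a different route from the paper. The paper works in Lagrangian form. It derives the same equation for $w=\partial_l x_E$ and applies Gr\"onwall pointwise along each characteristic to get $\abs{w(t,L(t))}\le e^{(2C_M+\Theta_M)t}\big(\abs{w_{\rm foot}}+2C_2K_\infty t\big)$. It then integrates over $\Om$ via the characteristic change of variables and adds the jumps transported from $x_0$. You instead run an Eulerian $L^1$ estimate: multiply by $\operatorname{sgn}(w)$, integrate by parts, and drop the outflow term. You reach general $BV$ data by mollification and lower semicontinuity.

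Your route buys two things. First, it covers arbitrary $x_0\in BV(\Om)$, whereas the paper's pointwise bound treats $\partial_l x_0$ as a function plus finitely many jumps. Second, and more substantively, you compute the inflow trace correctly. At $l=l_0$ the equation gives $g_Ew=-\partial_t x_E-(\partial_l g_E+c_E)x_E$ with $\partial_t x_E(t,l_0)=\tfrac{d}{dt}\big[p(t)/g(E(t),l_0)\big]$, which contains $\dot p$ and $\dot E$. The paper records this foot value as $\partial_l(p/g_E)$ at $l_0$ and says that $\Lip(E)\le\Lambda^*$ does not affect finiteness. Your computation shows that the bound on $\dot E$ is exactly what controls the inflow contribution, and hence why $C_{BV}$ depends on $\Lambda^*$. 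In exchange, the paper's version yields a pointwise bound on $\partial_l x_E$ and stays closer to \eqref{eq:app-rep}.

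Two minor points remain. Your opening claim that all constants depend only on $C_M,C_2,\Theta_M,g_M,T,\Lambda^*,\Om$ is contradicted by your own estimates, which also use $\norm{p}_{W^{1,\infty}}$, $\norm{x_0}_{L^\infty}$, and $\abs{x_0}_{BV}$. The lemma's stated dependence omits these too. Also, once you adjust the mollified $x_0$ near $l_0$ for compatibility, the corner jump is already part of the initial variation, so adding it again at the end is redundant, though harmless.
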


\begin{proof}
Differentiate \eqref{eq:model-nonconservative} in $l$ and set $w:=\partial_l x_E$.
Then $w$ solves, along characteristics,
\[
\frac{d}{ds}\big(w(s,L(s))\big)
=-\big(2\partial_l g_E+c_E\big)(s,L(s))\,w(s,L(s))
-\big(\partial_{ll}g_E+\partial_l c_E\big)(s,L(s))\,x_E(s,L(s)).
\]
By \eqref{eq:model-ass-second} and $\norm{\partial_l u}_\infty\le C_2$ we have
$\abs{\partial_{ll}g_E}+\abs{\partial_l c_E}\le 2C_2$, and
$\abs{2\partial_l g_E+c_E}\le 2C_M+\Theta_M$. With $\norm{x_E}_\infty\le K_\infty$
from \eqref{eq:app-Linf}, Gr\"onwall along characteristics gives
\[
\abs{w(t,L(t))}\le
e^{(2C_M+\Theta_M)t}\Big(\abs{w_{\rm foot}}+2C_2K_\infty t\Big),
\]
where $w_{\rm foot}$ is either $\partial_l x_0$ (initial foot) or the boundary
contribution $\partial_l\big(p/g_E\big)$ at $l_0$, both controlled by
$\abs{x_0}_{BV}$, $\norm{p}_{W^{1,\infty}}$, $g_M$, $C_M$. Integrating the
total variation over the (measure-preserving up to a bounded Jacobian)
characteristic change of variables and adding the finitely many jumps transported
from $x_0$ yields \eqref{eq:app-BV} with a constant of the stated dependence; the
Lipschitz bound $\Lip(E)\le\Lambda^*$ enters only through the time-regularity of
$g_E$ and does not affect finiteness.
\end{proof}

Lemmas~\ref{lem:app-mass-sup}--\ref{lem:app-bv} establish
Theorem~\ref{thm:main-frozen} (equivalently, Theorem~\ref{thm:frozen-transport}
below): existence, uniqueness, nonnegativity, and the three bounds.

\begin{theorem}[Frozen-path transport estimates]
\label{thm:frozen-transport}
For every $E\in\Bt$ the frozen problem
\eqref{eq:model-frozen-state}--\eqref{eq:model-frozen-bc} has a unique nonnegative
weak solution $x_E=\Tcal_E(x_0,p,u)\in L^\infty(0,T;L^1_+(\Om))$, given by
\eqref{eq:app-rep}, and it satisfies \eqref{eq:app-L1}--\eqref{eq:app-Linf}. If
$E\in\Btl$, then $x_E(t,\cdot)\in BV(\Om)$ with the uniform bound
\eqref{eq:app-BV}.
\end{theorem}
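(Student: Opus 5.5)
The plan is to obtain Theorem~\ref{thm:frozen-transport} by assembling the characteristic construction of Subsection~\ref{subsec:app-characteristics} with Lemmas~\ref{lem:app-mass-sup}--\ref{lem:app-bv}. The remaining task is to show that formula \eqref{eq:app-rep} really is a weak solution in the sense of \eqref{eq:model-weak-form} with $E$ frozen, and that it is the only one.

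\textbf{Step 1 (characteristic flow).} Since $g_E(s,\cdot)$ is Lipschitz in $l$ uniformly in $s$ and measurable in $s$, Carath\'eodory theory gives a unique absolutely continuous flow $L(s;t,l)$. The floor $g_E\ge g_M$ makes the flow strictly increasing in $s$. The Jacobian $\partial_l L$ satisfies
\[
\partial_l L(s;t,l)=\exp\Big[\int_t^s\partial_lg_E(\tau,L(\tau))\,d\tau\Big],
\]
so it is bounded above and below by $e^{\pm C_MT}$. Hence the foot-map $(t,l)\mapsto$ foot point on $\{s=0\}\cup\{l=l_0\}$ is bi-Lipschitz on each of the two regions. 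These regions are separated by the single characteristic issued from $(0,l_0)$. In particular, $x_E$ defined by \eqref{eq:app-rep} is measurable, nonnegative, and lies in $L^\infty(0,T;L^1)$.

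\textbf{Step 2 (existence).} Take a test function $\varphi$ as in Definition~\ref{def:model-weak-solution}. Change variables $(t,l)\mapsto(s,\text{foot})$ and use that $\frac{d}{ds}\varphi(s,L(s))=(\partial_t\varphi+g_E\partial_l\varphi)(s,L(s))$. The exponential weight in \eqref{eq:app-rep} is exactly compensated by the Jacobian times the factor $e^{-\int c_E}$. Integrating by parts in $s$ along each characteristic produces three contributions:
\begin{itemize}
\item the initial term $\int x_0\varphi(0,\cdot)$ from initial feet;
\item the term $\int p\,\varphi(\cdot,l_0)$ from inflow feet, since $g_E x_E=p$ there;
\item zero contribution at $l=l_m$ and $s=T$, because $\varphi$ vanishes there.
\end{itemize}
This yields \eqref{eq:model-weak-form}.

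\textbf{Step 3 (uniqueness).} Use a duality (Holmgren) argument. Let $z$ be the difference of two weak solutions; it satisfies \eqref{eq:model-weak-form} with zero data. For $h\in C_c^\infty$, solve the backward adjoint problem
\[
\partial_t\varphi+g_E\partial_l\varphi-c_E\varphi=h,\qquad \varphi(T,\cdot)=0,\qquad \varphi(\cdot,l_m)=0.
\]
This is again solved by integrating along characteristics, and $l_m$ is an outflow point for the backward problem, so the boundary condition there is admissible. The solution is only Lipschitz, not $C^1$, because $g_E$ is merely Lipschitz in $t$ via $E$. I would therefore mollify $g_E$, $c_E$ in $(t,l)$, solve the smoothed adjoint problems, and pass to the limit. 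This uses the bound $z\in L^\infty(0,T;L^1)$ and the uniform $W^{1,\infty}$ bounds of the smoothed adjoint solutions. The outcome is $\int\!\!\int zh=0$ for all $h$, hence $z=0$. \textbf{This regularization step is the main obstacle.} For $E\in\Bt$ the coefficients are only measurable in time, so the commutator error must be controlled. It reduces to $\|\partial_l(g_E-g_E^\varepsilon)\|_{L^1_tL^\infty_l}\to0$ paired against a uniformly Lipschitz $\varphi^\varepsilon$. This follows from Assumption~\ref{ass:model-coefficients}, since $l$-mollification converges in $W^{1,1}$-type norms and time mollification converges in $L^1_t$.

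\textbf{Step 4 (bounds).} The $L^1$ and $L^\infty$ bounds are Lemma~\ref{lem:app-mass-sup}. For the $L^1$ bound, the formal mass identity is justified by testing with $\varphi(t,l)=\theta(t)(1-\eta_\varepsilon(l))$, where $\eta_\varepsilon$ is a cutoff near $l_m$, and letting $\varepsilon\to0$ using the nonnegativity of the outflow. The $BV$ bound for $E\in\Btl$ is Lemma~\ref{lem:app-bv}. There, the Lipschitz time-regularity of $E$ makes $\partial_l p/g_E$ at the boundary well defined. Jumps of $x_E$ occur only along the separating characteristic and along transported jumps of $x_0$, so adding their sizes (each bounded by $K_\infty$-type constants) completes $C_{BV}$.
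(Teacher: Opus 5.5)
Your proposal is correct. For existence and the a priori bounds it is the paper's argument written out in detail: the paper also verifies \eqref{eq:model-weak-form} by substituting \eqref{eq:app-rep} and integrating by parts along \eqref{eq:app-char}, and it takes nonnegativity and the $L^1$, $L^\infty$ and $BV$ bounds from Lemmas~\ref{lem:app-mass-sup}--\ref{lem:app-bv}.

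The real difference is uniqueness. The paper settles it with a one-line appeal to DiPerna--Lions theory for Lipschitz-in-space coefficients. You instead run a Holmgren duality argument, with a backward adjoint that vanishes on $\{t=T\}\cup\{l=l_m\}$ and is free at $l_0$. Your route is self-contained and handles the inflow boundary directly: for the difference $z$ of two solutions, the term $\int p\,\varphi(\cdot,l_0)$ cancels for every admissible test function. The cited theory is formulated for the Cauchy problem without an inflow boundary, and the paper leaves that adaptation implicit. The paper's route buys brevity.

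Several of your additions make rigorous what the paper does only formally. The Carath\'eodory flow is needed because $E\in\Bt$ is only measurable in $t$. The cutoff test $\theta(t)(1-\eta_\varepsilon(l))$ justifies the mass bound. You are also right that $\Lip(E)$ is what controls the variation of $t_0\mapsto p(t_0)/g(E(t_0),l_0)$; that is exactly why the $BV$ bound is restricted to $\Btl$.

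Two points need fixing.

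\textbf{Step~3: the error term.} Testing the zero-data weak formulation with the smoothed adjoint $\varphi^\varepsilon$ gives
\[
\iint z\,h\,dl\,dt=-\iint z\Big[(g_E-g_E^\varepsilon)\,\partial_l\varphi^\varepsilon-(c_E-c_E^\varepsilon)\,\varphi^\varepsilon\Big]\,dl\,dt .
\]
Here $z\in L^\infty(0,T;L^1)$, and $\norm{\varphi^\varepsilon}_{W^{1,\infty}}$ is bounded uniformly, because mollification preserves $g\ge g_M$ and the bounds on $\partial_l g$ and $\partial_l c$. So you only need $\norm{g_E-g_E^\varepsilon}_{L^1_tL^\infty_l}+\norm{c_E-c_E^\varepsilon}_{L^1_tL^\infty_l}\to0$. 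This holds because both coefficients are uniformly Lipschitz in $l$ and strongly measurable in $t$ as $C(\Om)$-valued maps.

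The condition you state, $\norm{\partial_l(g_E-g_E^\varepsilon)}_{L^1_tL^\infty_l}\to0$, is false in general: $\partial_l g(E,\cdot)$ is only bounded and may jump, and $W^{1,1}$ convergence would not give an $L^\infty_l$ bound anyway. Fortunately it is not needed. Alternatively, mollify the exact Lipschitz adjoint rather than the coefficients and conclude by dominated convergence.

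\textbf{Step~4: the $BV$ bookkeeping.} Bounding each transported jump by a $K_\infty$-type constant does not give a finite total, since $x_0\in BV$ may have countably many jumps and a Cantor part. The proof of Lemma~\ref{lem:app-bv} has the same defect when it speaks of finitely many jumps. Use instead that the foot map is monotone on each region. Then the variation of $x_E(t,\cdot)$ there equals the variation, in the foot variable, of the foot datum times the exponential weight. This is at most $e^{C_MT}\abs{x_0}_{BV}$, plus terms controlled by $\norm{x_0}_{L^\infty}$, $\norm{p}_{W^{1,\infty}}$, $C_M$, $C_2$, $g_M$, $T$ and $\Lip(E)\le\Lambda^*$, plus one interface jump of size at most $2K_\infty$ across the characteristic issued from $(0,l_0)$.
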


\begin{proof}
Representation \eqref{eq:app-rep} is the unique weak solution: any two weak
solutions agree along characteristics by the transport structure (uniqueness for
transport with Lipschitz-in-space, bounded coefficients \cite{diperna1989ordinary}),
and
\eqref{eq:app-rep} satisfies \eqref{eq:model-weak-form} by direct substitution and
integration by parts along \eqref{eq:app-char}. Nonnegativity and the bounds are
Lemmas~\ref{lem:app-mass-sup}--\ref{lem:app-bv}.
\end{proof}

\subsection{Volterra comparison estimate}
\label{subsec:app-volterra}

\begin{proposition}[Volterra comparison]
\label{prop:volterra-transport}
Let $E_1,E_2\in\Btl$ and $x_i:=\Tcal_{E_i}(x_0,p,u)$. Then for $0\le t\le T$,
\begin{equation}
\label{eq:app-volterra}
\norm{x_1(t,\cdot)-x_2(t,\cdot)}_{L^1(\Om)}
\le
C_T\int_0^t\abs{E_1(s)-E_2(s)}\,ds,
\end{equation}
with
$\displaystyle 
C_T=L_M\Big[\tfrac{\norm{p}_{L^\infty(0,T)}}{g_M}+2M_x(T)+C_{BV}\Big]$.
\end{proposition}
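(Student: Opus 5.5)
Set $w:=x_1-x_2$ and subtract the two frozen problems. The difference solves a linear transport problem driven by the $E_1$-coefficients, with a source coming from the coefficient mismatch:
\[
\partial_t w+\partial_l(g_{E_1}w)+c_{E_1}w
=-\partial_l\big((g_{E_1}-g_{E_2})x_2\big)-(\mu_{E_1}-\mu_{E_2})x_2,
\qquad w(0,\cdot)=0 .
\]
The inflow condition for $w$ has to be written carefully. Since $g_{E_i}(t,l_0)x_i(t,l_0)=p(t)$, we have
\[
g_{E_1}(t,l_0)w(t,l_0)=-\big(g_{E_1}-g_{E_2}\big)(t,l_0)\,x_2(t,l_0).
\]
This boundary datum is the origin of the $\norm{p}_\infty/g_M$ term in $C_T$, because $x_2(t,l_0)=p(t)/g_{E_2}(t,l_0)$.

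The $L^1$ estimate comes from a Kato-type argument. Formally multiply by $\operatorname{sgn}(w)$; rigorously, use a smooth convex approximation $\eta_\varepsilon(w)\to|w|$, or equivalently the $L^1$-contraction of the frozen semigroup written in Duhamel form along the $E_1$-characteristics. Integrating over $\Om$ gives:
\begin{itemize}
\item the outflow term at $l_m$ has a favourable sign;
\item the absorption term $c_{E_1}|w|\ge0$ can be dropped;
\item the inflow contributes at most $|g_{E_1}-g_{E_2}|(t,l_0)\,p(t)/g_M\le L_M|E_1(t)-E_2(t)|\,\norm{p}_\infty/g_M$ by \eqref{eq:model-ass-Elip}.
\end{itemize}
It remains to bound the source terms in total variation norm, i.e. as measures. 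Expanding $\partial_l((g_{E_1}-g_{E_2})x_2)=\partial_l(g_{E_1}-g_{E_2})\,x_2+(g_{E_1}-g_{E_2})\,\partial_lx_2$ and using \eqref{eq:model-ass-Elip}, whose $W^{1,\infty}$-norm controls both $g$ and $\partial_lg$, we obtain
\[
\norm{\text{source}(t)}_{\Mm}\le L_M|E_1(t)-E_2(t)|\Big(2\norm{x_2(t)}_{L^1}+|x_2(t)|_{BV}\Big).
\]
Here one $\norm{x_2}_{L^1}$ comes from $\partial_l(g_{E_1}-g_{E_2})x_2$ and the other from $(\mu_{E_1}-\mu_{E_2})x_2$. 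Theorem~\ref{thm:frozen-transport} then supplies $\norm{x_2(t)}_{L^1}\le M_x(T)$ and, because $E_2\in\Btl$, $|x_2(t)|_{BV}\le C_{BV}$. Integrating $\frac{d}{dt}\norm{w}_{L^1}\le L_M|E_1-E_2|[\norm{p}_\infty/g_M+2M_x(T)+C_{BV}]$ from $0$ with $w(0)=0$ gives \eqref{eq:app-volterra} with exactly the stated $C_T$. Notably, no Gr\"onwall factor appears, since the homogeneous part is $L^1$-nonexpansive.

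The main obstacle is making the Kato step rigorous when $x_2$ is only $BV$, since then $\partial_lx_2$ is a measure and $w$ is only a weak solution. My plan is to work with the Duhamel representation $w(t)=\int_0^tU_{E_1}(t,s)F(s)\,ds+(\text{boundary response})$, where $U_{E_1}$ is the zero-inflow evolution family given by the characteristic formula \eqref{eq:app-rep}. Checking directly from \eqref{eq:app-rep}, using the Jacobian of the characteristic flow, I would show that $U_{E_1}(t,s)$ maps finite measures to finite measures with total variation nonincreasing. The boundary-flux response should likewise have mass at most $\int|{\rm flux}|$. Alternatively, one could regularize $x_0$ and $p$ and the coefficients, prove the estimate for classical solutions, and pass to the limit using $L^1$-stability of \eqref{eq:app-rep}, noting that the constants are uniform in the regularization.
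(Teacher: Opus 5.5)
Your proposal is correct and follows essentially the same route as the paper. It uses the same difference equation for $w=x_1-x_2$ with inflow defect $g_1w(t,l_0)=(g_2-g_1)(t,l_0)\,x_2(t,l_0)$, and the same Kruzhkov/Kato sign argument in which the outflow and absorption terms are dropped. It then applies the same three bounds: $\norm{p}_\infty/g_M$ from the inflow, $M_x(T)+C_{BV}$ from the BV Leibniz rule, and a second $M_x(T)$ from the reaction term. These integrate to exactly $C_T$ with no Gr\"onwall factor.

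Your Duhamel justification, based on total-variation nonexpansiveness of the zero-inflow evolution on measures, is a sound way to make the sign step rigorous; the paper handles that step only by citing the Kruzhkov method.
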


\begin{proof}
Write $g_i:=g(E_i(t),\cdot)$, $c_i:=\mu(E_i(t),\cdot)+u$, and $w:=x_1-x_2$. From
\eqref{eq:model-frozen-state} for $i=1,2$, in conservative form,
\begin{equation}
\label{eq:app-w-eq}
\partial_t w+\partial_l(g_1 w)+c_1 w
=
-\partial_l\big((g_1-g_2)x_2\big)-(c_1-c_2)x_2
=:\Phi_1+\Phi_2,
\end{equation}
with inflow flux, using $g_i(t,l_0)x_i(t,l_0)=p(t)$,
\begin{equation}
\label{eq:app-w-bc}
g_1(t,l_0)w(t,l_0)
=p(t)-g_1(t,l_0)x_2(t,l_0)
=\big(g_2(t,l_0)-g_1(t,l_0)\big)x_2(t,l_0),
\end{equation}
and $w(0,\cdot)=0$. Let $\operatorname{sgn}_\varepsilon$ be a smooth monotone
approximation of $\operatorname{sgn}$ and test \eqref{eq:app-w-eq} against
$\operatorname{sgn}_\varepsilon(w)$; the Kruzhkov sign method gives, in the limit
$\varepsilon\downarrow0$ and using $\partial_l(g_1w)\operatorname{sgn}(w)
=\partial_l(g_1\abs{w})-(\partial_l g_1)\abs{w}\cdot\mathbf 1$ together with
$c_1\ge0$,
\begin{equation}
\label{eq:app-w-diff}
\frac{d}{dt}\norm{w(t)}_{L^1}
\le
\big[g_1\abs{w}\big]_{l=l_0}
+\int_\Om\big(\abs{\Phi_1}+\abs{\Phi_2}\big)\,dl ,
\end{equation}
the boundary term at $l_m$ being $\le0$ (outflow). We bound the four
contributions using $\norm{g_1-g_2}_{W^{1,\infty}}+\norm{c_1-c_2}_{W^{1,\infty}}
\le L_M\abs{E_1(t)-E_2(t)}$ from \eqref{eq:model-ass-Elip} and the a priori bounds
of Theorem~\ref{thm:frozen-transport} on $x_2$.

\emph{Boundary term.} By \eqref{eq:app-w-bc} and $x_2(t,l_0)=p(t)/g_2(t,l_0)\le
\norm{p}_\infty/g_M$,
\[
\big[g_1\abs{w}\big]_{l=l_0}
=\abs{g_2(t,l_0)-g_1(t,l_0)}\,x_2(t,l_0)
\le L_M\abs{E_1(t)-E_2(t)}\,\tfrac{\norm{p}_\infty}{g_M}.
\]

\emph{Interior terms.} The reaction term $\Phi_2=-(c_1-c_2)x_2$ is an $L^1$
function, and by \eqref{eq:model-ass-Elip},
\[
\int_\Om\abs{\Phi_2}\,dl
\le\norm{c_1-c_2}_\infty\norm{x_2}_{L^1}
\le L_M\abs{E_1(t)-E_2(t)}\,M_x(T).
\]
The convective term $\Phi_1=-D_l\big((g_1-g_2)x_2\big)$ is a finite signed
measure. Since $x_2(t,\cdot)\in BV(\Om)$, the distributional derivative
$D_lx_2(t,\cdot)$ is a finite signed measure and
\[
\abs{D_lx_2(t,\cdot)}(\Om)=\abs{x_2(t,\cdot)}_{BV(\Om)}.
\]
Hence, by the Leibniz rule for $BV$ functions,
\[
D_l\big((g_1-g_2)x_2\big)
=
\partial_l(g_1-g_2)\,x_2\,dl
+
(g_1-g_2)\,D_lx_2 ,
\]
and therefore
\[
\abs{D_l\big((g_1-g_2)x_2\big)}(\Om)
\le
\norm{\partial_l(g_1-g_2)}_\infty\norm{x_2}_{L^1}
+
\norm{g_1-g_2}_\infty\abs{x_2}_{BV}.
\]
Using
\[
\norm{g_1-g_2}_{W^{1,\infty}}
\le
L_M\abs{E_1(t)-E_2(t)},
\qquad
\norm{x_2}_{L^1}\le M_x(T),
\qquad
\abs{x_2}_{BV}\le C_{BV}
\]
(Theorem~\ref{thm:frozen-transport}), we obtain
$\displaystyle 
\abs{D_l\big((g_1-g_2)x_2\big)}(\Om)
\le
L_M\abs{E_1(t)-E_2(t)}\big(M_x(T)+C_{BV}\big)$.
The Kruzhkov entropy/sign method for scalar transport with inflow boundary data
\cite{bardos1979first,kruvzkov1970first} bounds
$\tfrac{d}{dt}\norm{w(t)}_{L^1}$ by the boundary-flux defect together with the
total variation of the source measure. Combining the boundary term with the two
interior bounds,
\begin{equation*}
\begin{aligned}
\frac{d}{dt}\norm{w(t)}_{L^1}
&\le
L_M\Big[\tfrac{\norm{p}_\infty}{g_M}+2M_x(T)+C_{BV}\Big]
\abs{E_1(t)-E_2(t)}
=C_T\,\abs{E_1(t)-E_2(t)}, \\
C_T&:=L_M\Big[\tfrac{\norm{p}_\infty}{g_M}+2M_x(T)+C_{BV}\Big].
\end{aligned}
\end{equation*}
Since $w(0)=0$, integration in $t$ yields \eqref{eq:app-volterra}.
\end{proof}

Proposition~\ref{prop:volterra-transport} is exactly the estimate invoked in the
proof of Proposition~\ref{prop:main-volterra}.
\section{Nonlinear stability verification}
\label{app:nonlinear}
This appendix supplies the two ingredients used in the proof of
Theorem~\ref{thm:main-nonlinear-stability}: the finite-memory (nilpotent-window)
comparison estimate, and the verification, under the second-order regularity
\eqref{eq:model-C2E}, of the quadratic remainder bound
\eqref{eq:main-quadratic-remainder}. Throughout, $(x^*,E^*)$ is a stationary
closed-loop equilibrium, $v:=x-x^*$, $e:=\langle\chi,v\rangle$, and
\begin{equation}
\label{eq:app-Tsharp}
T_\sharp:=\int_{l_0}^{l_m}\frac{dl}{g_M}\ \ge\ t_\sharp
=\int_{l_0}^{l_m}\frac{dl}{g^*(l)},
\end{equation}
a universal upper bound for the transit time valid for every admissible feedback,
since $g\ge g_M$.

\subsection{The nonlinear renewal remainder}
\label{subsec:app-remainder}

In closed loop $E(t)=E^*+e(t)$. Subtracting the stationary identities
\eqref{eq:model-stationary-state} from \eqref{eq:model-state} and writing
\[
\Delta g(t,l):=g(E^*+e(t),l)-g^*(l),\qquad
\Delta c(t,l):=\big(\mu(E^*+e(t),l)-\mu^*(l)\big),
\]
the deviation $v$ solves
\begin{equation}
\label{eq:app-v-eq}
\partial_t v+\partial_l\big(g(E^*+e,\cdot)v\big)+\big(\mu(E^*+e,\cdot)+u\big)v
=-\partial_l\big(\Delta g\,x^*\big)-\Delta c\,x^*,
\end{equation}
with inflow flux
\begin{equation}
\label{eq:app-v-bc}
g(E^*+e(t),l_0)\,v(t,l_0)=-\Delta g(t,l_0)\,x^*(l_0),
\qquad v(0,\cdot)=v_0 .
\end{equation}
Taylor expansion in the scalar $e$, using \eqref{eq:model-C2E}, gives
\begin{equation}
\label{eq:app-taylor}
\Delta g=e\,\partial_E g^*+R_g,\qquad
\Delta c=e\,\partial_E\mu^*+R_\mu,\qquad
\abs{R_g}+\abs{R_\mu}\le \tfrac12 C_2'\,e^2 ,
\end{equation}
with the same bound for $\partial_l R_g$ by \eqref{eq:model-C2E}. Inserting the
first-order parts of \eqref{eq:app-taylor} into
\eqref{eq:app-v-eq}--\eqref{eq:app-v-bc} reproduces exactly the linearized system
\eqref{eq:main-linearized}--\eqref{eq:main-sigma-int}: interior source
$\sigma_{\rm int}\,e$ and inflow flux $-a_0e$. Consequently, pairing the solution
of \eqref{eq:app-v-eq} with $\chi$ and using the variation-of-constants
representation of Proposition~\ref{prop:main-renewal}, the scalar output obeys the nonlinear renewal equation
$e=f+\kappa*e+\Gcal[e,v_0]$,
where $f,\kappa$ are as in Proposition~\ref{prop:main-renewal}, and the remainder
$\Gcal[e,v_0]$ collects (i) the second-order coefficient terms $R_g,R_\mu$ and
(ii) the genuinely nonlinear products $\Delta g\,\partial_l v$, $\Delta c\,v$ of
the perturbed coefficients against $v$ itself.

\subsection{Nilpotent-window comparison}
\label{subsec:app-window}

\begin{lemma}[Finite-memory comparison]
\label{lem:app-window}
There is $C_\sharp=C_\sharp(C_M,\Theta_M,g_M,\Om)$ such that the closed-loop
deviation satisfies, for all $t\ge0$,
\begin{equation}
\label{eq:app-window}
\norm{v(t,\cdot)}_{L^1(\Om)}
\le
C_\sharp\int_{(t-T_\sharp)_+}^{t}\abs{e(s)}\,ds
+\mathbf 1_{\{t<T_\sharp\}}\,\norm{v_0}_{L^1(\Om)} .
\end{equation}
\end{lemma}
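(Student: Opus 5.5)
The plan is to read \eqref{eq:app-v-eq}--\eqref{eq:app-v-bc} as a linear transport problem for $v$ along the \emph{closed-loop} characteristics, i.e.\ with the frozen path $E^*+e(\cdot)$. It is driven by three inputs: the initial datum $v_0$, an interior source $-\partial_l(\Delta g\,x^*)-\Delta c\,x^*$, and an inflow flux $-\Delta g(t,l_0)x^*(l_0)$. The estimate then follows from a Duhamel decomposition combined with finite transit time. Since $g(E^*+e(t),l)\ge g_M$ whenever the closed-loop output stays in the feedback box (Theorem~\ref{thm:main-wellposedness}), every characteristic $L(s;t,l)$ of \eqref{eq:app-char}, taken with the closed-loop velocity, crosses $\Om$ in time at most $T_\sharp$ from \eqref{eq:app-Tsharp}. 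This is the same argument as in Lemma~\ref{lem:main-sweep}, now with a time-dependent velocity: $\frac{d}{ds}L\ge g_M$.

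\textbf{Step 1 (decomposition).} By linearity in the data for the fixed path, write $v=v^{(0)}+v^{(\rm int)}+v^{(\rm bd)}$.
\begin{itemize}[leftmargin=1.5em]
\item[(a)] $v^{(0)}$ carries $v_0$ with zero inflow and zero source. By the mass estimate of Lemma~\ref{lem:app-mass-sup} (with $p=0$), $\|v^{(0)}(t)\|_{L^1}\le\|v_0\|_{L^1}$. By finite transit, $v^{(0)}(t)=0$ for $t\ge T_\sharp$. Together these give the term $\mathbf 1_{\{t<T_\sharp\}}\|v_0\|_{L^1}$.
\item[(b)] $v^{(\rm int)}(t)=\int_0^t U_e(t,s)\,\mathcal S(s)\,ds$, where $U_e(t,s)$ is the zero-inflow propagator along the closed-loop path and $\mathcal S(s)=-\partial_l(\Delta g\,x^*)-\Delta c\,x^*$.
\item[(c)] $v^{(\rm bd)}$ is the response to the boundary flux.
\end{itemize}

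\textbf{Step 2 (window for the source terms).} Since $U_e(t,s)=0$ for $t-s\ge T_\sharp$, the source integral in (b) reduces to $\int_{(t-T_\sharp)_+}^t$. Likewise, inflow entering at time $s$ has left $\Om$ by time $s+T_\sharp$, so only boundary flux from $s\in((t-T_\sharp)_+,t)$ contributes to (c).

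The propagator $U_e(t,s)$ contracts in $L^1$ (zero inflow, nonnegative removal). I would extend this contraction to finite measures, since $\mathcal S(s)$ contains $\partial_l(\Delta g\,x^*)$ and $x^*$ may jump through $u$. Then
\[
\|U_e(t,s)\mathcal S(s)\|_{L^1}\le|\mathcal S(s)|(\Om)\le\|\Delta g(s)\|_{W^{1,\infty}}\,|x^*|_{BV}+\|\Delta c(s)\|_\infty\|x^*\|_{L^1}.
\]
By \eqref{eq:model-ass-Elip} the right-hand side is at most $L_M|e(s)|(|x^*|_{BV}+\|x^*\|_{L^1})$.

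For the boundary response, the Kruzhkov-type $L^1$ bound used in Proposition~\ref{prop:volterra-transport} gives that the mass injected on $[s,s+ds]$ is $|\Delta g(s,l_0)|x^*(l_0)\,ds\le L_M|e(s)|\,p/g_M\,ds$. Transport preserves this mass at most, and it is fully flushed out after time $T_\sharp$.

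\textbf{Step 3 (assemble).} Summing (a)--(c) gives \eqref{eq:app-window} with
\[
C_\sharp=L_M\bigl(p/g_M+\|x^*\|_{L^1}+|x^*|_{BV}\bigr).
\]
The stationary formula \eqref{eq:model-stationary-profile} bounds $x^*$ in $BV$ by a constant depending only on $C_M,\Theta_M,g_M,\Om,p$ and the control bounds. This is the claimed dependence, with $p$ and $L_M$ absorbed as fixed data.

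\textbf{Main obstacle.} The delicate point is the rigorous $L^1$ contraction of the time-dependent propagator acting on measure-valued sources, together with the exact vanishing $U_e(t,s)=0$ for $t-s\ge T_\sharp$ when the velocity depends on time through $e$.

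I would handle both along characteristics, using the representation \eqref{eq:app-rep}:
\begin{itemize}[leftmargin=1.5em]
\item The Jacobian of the flow times the density is conserved up to the damping factor $\exp(-\int c)\le1$. This gives mass non-increase for atoms and absolutely continuous parts alike.
\item The exit-time bound follows from $L(s)\ge l+g_M(s-t)$.
\end{itemize}
If handling measure sources directly is awkward, I would fall back on mollifying $x^*$, proving the bound with the $BV$ seminorm, and passing to the limit.
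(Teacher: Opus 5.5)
Your proof is correct, and it uses the same ingredients as the paper: the finite transit time $T_\sharp$, $L^1$-contraction of the damped transport, and the bounds of the inflow defect $\abs{\Delta g(t,l_0)}x^*(l_0)$ and of the source mass by a constant times $\abs{e(t)}$. You assemble them differently, through an explicit Duhamel splitting $v^{(0)}+v^{(\mathrm{int})}+v^{(\mathrm{bd})}$ driven by the nilpotent closed-loop propagator $U_e(t,s)$.

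The paper instead derives one Kruzhkov-type differential inequality, $\frac{d}{dt}\norm{v(t)}_{L^1}\le C_\sharp\abs{e(t)}$, and then integrates it over $[(t-T_\sharp)_+,t]$. Taken literally, for $t\ge T_\sharp$ that integration gives $\norm{v(t)}_{L^1}\le\norm{v(t-T_\sharp)}_{L^1}+C_\sharp\int_{t-T_\sharp}^t\abs{e}$. The leftover term is discarded only by the paper's verbal remark that $v(t)$ depends solely on data entering during the last window. Your superposition argument is exactly what makes that remark rigorous. The cost is setting up a propagator for a time-dependent velocity; the paper's route is shorter and yields the constant in one line.

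Some small points:
\begin{itemize}
\item The measure-valued-source obstacle you anticipate does not arise. The profile $x^*$ in \eqref{eq:model-stationary-profile} is Lipschitz even for discontinuous $u$, because $u$ enters only through $\int_{l_0}^{l}u/g^*$. The paper records $x^*\in W^{1,\infty}$ in the proof of Lemma~\ref{lem:app-v-bv-window}. Hence $\mathcal S(s)\in L^\infty(\Om)$, and ordinary $L^1$ contraction suffices.
\item In your Leibniz step the convective part is bounded by $\norm{\partial_l\Delta g}_\infty\norm{x^*}_{L^1}+\norm{\Delta g}_\infty\abs{x^*}_{BV}$, not by $\norm{\Delta g}_{W^{1,\infty}}\abs{x^*}_{BV}$. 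Your final bound $L_M\abs{e(s)}\bigl(\abs{x^*}_{BV}+\norm{x^*}_{L^1}\bigr)$ is nevertheless correct.
\item Your use of $L_M$ rather than $C_M$ for $\abs{\partial_l\Delta g}$ is in fact the constant that \eqref{eq:model-ass-Elip} supplies.
\end{itemize}
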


\begin{proof}
By \eqref{eq:app-Tsharp} every characteristic of \eqref{eq:app-v-eq} issued from
the inflow boundary reaches $l_m$ within time $T_\sharp$; hence for $t\ge T_\sharp$
the value $v(t,\cdot)$ depends only on data entering through $\{l=l_0\}$ on
$[t-T_\sharp,t]$ and on the interior source there, the contribution of $v_0$
having been swept out (Lemma~\ref{lem:main-sweep}). Along characteristics the
homogeneous part is a contraction ($c_E\ge0$), so integrating \eqref{eq:app-v-eq}
in $L^1$ as in \eqref{eq:app-w-diff},
\[
\frac{d}{dt}\norm{v(t)}_{L^1}
\le \underbrace{\abs{\Delta g(t,l_0)}x^*(l_0)}_{\text{inflow}}
+\underbrace{\int_\Om\big(\abs{\partial_l(\Delta g\,x^*)}+\abs{\Delta c\,x^*}\big)dl}_{\text{source}} .
\]
Since $\abs{\Delta g}\le C_M\abs{e}$, $\abs{\partial_l\Delta g}\le C_M\abs{e}$,
$\abs{\Delta c}\le C_M\abs{e}$ (first-order Lipschitz bound
\eqref{eq:model-ass-Elip}), and $x^*\in BV$ with $x^*(l_0)=p/g^*(l_0)$, the
right-hand side is $\le C_\sharp\abs{e(t)}$ with
$C_\sharp:=C_M\big(x^*(l_0)+\norm{x^*}_{L^1}+\abs{x^*}_{BV}\big)$. Integrating over
$[(t-T_\sharp)_+,t]$ and adding the transported initial datum when $t<T_\sharp$
gives \eqref{eq:app-window}.
\end{proof}

\begin{lemma}[Perturbation $BV$-window estimate]
\label{lem:app-v-bv-window}
Assume \eqref{eq:model-C2E}. There is a constant \par
\noindent $C_\flat=C_\flat\big(C_M,C_2,C_2',\Theta_M,g_M,\norm{x^*}_{W^{1,\infty}},\Om,T_\sharp\big)$
such that, for all $t\ge0$,
\begin{equation}
\label{eq:app-v-bv}
\abs{v(t,\cdot)}_{BV(\Om)}
\le
C_\flat\Big(\mathbf 1_{\{t<T_\sharp\}}\norm{v_0}_{BV}
+\sup_{s\in[(t-T_\sharp)_+,t]}\abs{e(s)}\Big).
\end{equation}
\end{lemma}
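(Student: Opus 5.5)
I would prove \eqref{eq:app-v-bv} by differentiating the deviation equation \eqref{eq:app-v-eq} in $l$ and following $w:=\partial_l v$ along the characteristics of the closed-loop velocity $g(E^*+e(t),\cdot)$, exactly as in Lemma~\ref{lem:app-bv}. The finite-memory mechanism of Lemma~\ref{lem:app-window} then localises all forcing to the window $[(t-T_\sharp)_+,t]$.

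\textbf{Step 1: the equation for $w$.} Write $G(t,l):=g(E^*+e(t),l)$ and $c(t,l):=\mu(E^*+e(t),l)+u(l)$. In nonconservative form \eqref{eq:app-v-eq} reads $\partial_t v+G\partial_l v=-(\partial_lG+c)v-\partial_l(\Delta g\,x^*)-\Delta c\,x^*$. Differentiating in $l$ gives, along $\frac{d}{ds}L=G(s,L)$,
\[
\frac{d}{ds}w=-(2\partial_lG+c)\,w-(\partial_{ll}G+\partial_lc)\,v-\partial_{ll}(\Delta g\,x^*)-\partial_l(\Delta c\,x^*).
\]
The coefficients are controlled as follows.
\begin{itemize}
\item The damping coefficient satisfies $\abs{2\partial_lG+c}\le 2C_M+\Theta_M$.
\item The zeroth-order coefficient satisfies $\abs{\partial_{ll}G+\partial_lc}\le 2C_2$ by \eqref{eq:model-ass-second}.
\item The source is bounded pointwise by $C\abs{e(s)}$. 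This uses \eqref{eq:model-ass-Elip} and \eqref{eq:model-C2E} for $\Delta g$, $\partial_l\Delta g$, $\Delta c$, together with $x^*\in W^{1,\infty}$.
\end{itemize}

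The term $\partial_{ll}\Delta g\,x^*$ is the delicate one. It requires $\partial_{ll}g$ to be Lipschitz in $E$, which is not literally assumed. I would avoid it by working in conservative form. Treat $\partial_l(\Delta g\,x^*)$ as a measure-valued source, and bound the total variation of $v$ via the $L^1$ norm of $\partial_l$ of the right-hand side of the nonconservative equation. Equivalently, write $\Delta g=\int_0^1\partial_Eg(E^*+\theta e,\cdot)\,d\theta\,e$ and use $\norm{\partial_Eg}_{W^{1,\infty}}$ from \eqref{eq:model-C2E}. Then $\partial_{ll}(\Delta g\,x^*)$ is needed only as a measure, and its total variation is at most $C\abs{e}\big(\norm{x^*}_{W^{1,\infty}}+\abs{\partial_lx^*}_{BV}\big)$. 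Since $x^*$ is given by \eqref{eq:model-stationary-profile}, $\partial_lx^*$ is $BV$ whenever $g^*\in W^{2,\infty}$ and $\mu^*,u\in W^{1,\infty}$.

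\textbf{Step 2: Gr\"onwall along characteristics.} This gives $\abs{w(t,L(t))}\le e^{(2C_M+\Theta_M)t}\big(\abs{w_{\rm foot}}+\int(2C_2\abs{v}+C\abs{e})\,ds\big)$. There are two kinds of foot.
\begin{itemize}
\item For an initial foot ($t<T_\sharp$ only, since $G\ge g_M$), $w_{\rm foot}$ is $\partial_lv_0$, contributing $\norm{v_0}_{BV}$ after integration.
\item For an inflow foot at time $t_0\ge t-T_\sharp$, the trace of $w$ is obtained from the equation at $l=l_0$. The flux condition \eqref{eq:app-v-bc} gives $v(t_0,l_0)=O(\abs{e(t_0)})$. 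The normal derivative is then $\partial_lv=-(\partial_tv+(\partial_lG+c)v+\text{source})/G$ at $l_0$, so it needs $\partial_tv(\cdot,l_0)$, i.e.\ $\frac{d}{dt}\big(\Delta g(t,l_0)x^*(l_0)/G\big)$, which involves $\dot e$. This is where the main obstacle lies.
\end{itemize}

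\textbf{Resolving the obstacle.} I would first try to avoid pointwise traces altogether by measuring variation in total-variation form. A jump or a gradient in $v$ created at the boundary at time $t_0$ has size controlled by the variation of $t\mapsto G(t,l_0)^{-1}\Delta g(t,l_0)x^*(l_0)$ over the inflow window. Under the change of variables $t_0\mapsto L(t;t_0,l_0)$, which has a Jacobian bounded above and below by $g_M$, $C_M$ and $e^{C_MT_\sharp}$, that variation becomes spatial variation. If this is insufficient, I would accept a bound by $\sup\abs{e}$ plus the variation of $e$ on the window. For the closed loop, Lemma~\ref{lem:main-output-regularity}-type Lipschitz control shows that $\dot e$ is bounded by $\norm{v(t)}_{L^1}$, the trace of $v$ at $l_m$, and $\abs{e}$. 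The $L^1$ norm is in turn bounded by the window supremum of $\abs{e}$ through Lemma~\ref{lem:app-window}, which closes the estimate without circularity. The $v$-term inside the Gr\"onwall integral is handled by the same Lemma~\ref{lem:app-window}.

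\textbf{Step 3: integrate and collect.} Integrating over $l$ with the bounded characteristic Jacobian, and using that all feet of characteristics alive at time $t$ lie in $[(t-T_\sharp)_+,t]$ (Lemma~\ref{lem:main-sweep} together with $G\ge g_M$), yields \eqref{eq:app-v-bv}. The resulting constant $C_\flat$ depends only on the quantities listed in the statement.
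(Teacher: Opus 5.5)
Your route is the paper's: nonconservative form with source $h=-\partial_l(\Delta g\,x^*)-\Delta c\,x^*$, differentiation in $l$, Gr\"onwall over one sweep-out window, and Lemma~\ref{lem:app-window} for the zeroth-order $v$-term. However, the step meant to dispose of the delicate term fails. Expand
$\partial_{ll}(\Delta g\,x^*)=x^*\,\partial_{ll}\Delta g+2\,\partial_l\Delta g\,\partial_lx^*+\Delta g\,D_l(\partial_lx^*)$.
Your bound $C\abs{e}\big(\norm{x^*}_{W^{1,\infty}}+\abs{\partial_lx^*}_{BV}\big)$ covers only the last two terms. The first equals $e\,x^*\int_0^1\partial_{ll}\partial_Eg(E^*+\theta e,\cdot)\,d\theta$, which requires $\partial_l\partial_Eg$ to be $BV$ in $l$. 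Neither \eqref{eq:model-ass-Elip} (which gives $\partial_Eg\in W^{1,\infty}$) nor \eqref{eq:model-C2E} (which concerns $\partial_{EE}g$) provides this. \eqref{eq:model-ass-second} gives only $\norm{\partial_{ll}\Delta g}_\infty\le 2C_2$, not $\mathcal O(\abs{e})$. Passing to conservative form or to measures does not help, since the same $D_lh$ reappears. The paper's proof makes the identical unjustified claim $\abs{h}_{BV}\le C\abs{e}$ at this point.

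A repair that uses only the stated hypotheses is to subtract the quasi-stationary profile $X(t):=x_{E(t)}$ from \eqref{eq:model-stationary-profile}. With your $G,c$, one has $\partial_l(GX)+cX=0$ and $GX(t,l_0)=p$. So $z:=x-X$ solves the transport equation with zero inflow and source $-\partial_tX$, and $\norm{\partial_tX(t)}_{W^{1,\infty}}\le C\abs{\dot e(t)}$ needs only $E$-Lipschitz bounds on $g,\partial_lg,\mu$. The $BV$ estimate for $z$ then uses only $\norm{\partial_{ll}g}_\infty\le C_2$, and $\abs{X(t)-x^*}_{BV}\le C\abs{e(t)}$. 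This gives $\abs{v(t)}_{BV}\le C\big(\mathbf 1_{\{t<T_\sharp\}}\norm{v_0}_{BV}+\sup\abs{e}+\int\abs{\dot e}\big)$ over the window. As a bonus, the inflow datum of $\partial_lz$ is simply $-\partial_tX(t,l_0)/G=\mathcal O(\abs{\dot e})$.

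On the inflow you are right, and the paper's proof glosses over it. The paper records only $\abs{v(t,l_0)}\le C\abs{e(t)}$, but the $BV$ balance sees the inflow value of $D_lv$, i.e.\ the time variation of $v(\cdot,l_0)$, which is governed by $\dot e$. For a prescribed path (small fast oscillations of $e$ when $a_0\ne0$), no bound by $\sup\abs{e}$ alone can hold, so closed-loop control of $\dot e$ is genuinely needed. However, your bound $\abs{\dot e}\le C\big(\abs{e}+\norm{v}_{L^1}+\abs{v(\cdot,l_m)}\big)$ leaves the trace $v(s,l_m)$ unestimated. That trace needs a pointwise bound along characteristics, which is available since $\norm{h}_\infty\le C\abs{e}$ and $\norm{v_0}_\infty\le C\norm{v_0}_{BV}$. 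Both this bound and Lemma~\ref{lem:app-window} reach back a further $T_\sharp$ from $s\in[(t-T_\sharp)_+,t]$. What your argument actually delivers is therefore \eqref{eq:app-v-bv} with the supremum over $[(t-2T_\sharp)_+,t]$ and indicator $\mathbf 1_{\{t<2T_\sharp\}}$, not the statement as written. That weaker form suffices for Lemma~\ref{lem:app-quadratic}, where the constant just picks up $e^{2\abs{\omega}T_\sharp}$. The paper's own use of Lemma~\ref{lem:app-window} inside the window integral enlarges the window in the same way. Still, your Step~3 claim that the window and indicator come out exactly as stated is not justified.
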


\begin{proof}
Under the standing assumptions the stationary profile
\eqref{eq:model-stationary-profile} is Lipschitz, $x^*\in W^{1,\infty}(\Om)$, with
$\norm{x^*}_{W^{1,\infty}}\le C(C_M,\Theta_M,g_M,\norm{p}_\infty)$. Write
\eqref{eq:app-v-eq} in nonconservative form,
\[
\partial_t v+g_E\partial_l v
=-\big(\partial_l g_E+c_E\big)v+h,
\qquad
h:=-\partial_l(\Delta g\,x^*)-\Delta c\,x^*,
\]
with $g_E:=g(E^*+e,\cdot)$, $c_E:=\mu(E^*+e,\cdot)+u$. Because
$\Delta g,\Delta c\in W^{1,\infty}(\Om)$ (differences of $W^{1,\infty}$
coefficients) and $x^*\in W^{1,\infty}(\Om)$, the source $h(t,\cdot)$ lies in
$BV(\Om)$, and by \eqref{eq:model-ass-Elip} and \eqref{eq:model-C2E},
$\displaystyle 
\norm{h(t,\cdot)}_{L^1}+\abs{h(t,\cdot)}_{BV}\le C\,\abs{e(t)}$.
Differentiating in $l$, the measure $\mu_v:=D_lv(t,\cdot)$ satisfies, in the sense
of measures along the characteristics $\dot L=g_E$ (speed $\ge g_M$, sweep-out
time $\le T_\sharp$),
\[
\partial_t\mu_v+g_E\partial_l\mu_v
=-\big(2\partial_l g_E+c_E\big)\mu_v
-\big(\partial_{ll}g_E+\partial_l c_E\big)v+D_lh .
\]
Here $\abs{2\partial_l g_E+c_E}\le 2C_M+\Theta_M$, and
$\abs{\partial_{ll}g_E}+\abs{\partial_l c_E}\le 2C_2+C_M$ by
\eqref{eq:model-ass-second} and $\norm{\partial_l u}_\infty\le C_2$; the last two
terms have total mass
\[
\int_\Om\abs{(\partial_{ll}g_E+\partial_l c_E)\,v}\,dl+\abs{D_lh}(\Om)
\le (2C_2+C_M)\norm{v(t)}_{L^1}+C\abs{e(t)} .
\]
The inflow boundary contributes $\abs{v(t,l_0)}\le C\abs{e(t)}$, by
\eqref{eq:app-v-bc} and $x^*(l_0)=p/g^*(l_0)$. Applying the measure-valued
$L^1$-stability estimate for $\mu_v$ as in \eqref{eq:app-w-diff} (with $c_E\ge0$
making the homogeneous flow a contraction) and Gr\"onwall over one sweep-out
window,
\[
\abs{v(t,\cdot)}_{BV}=\abs{\mu_v}(\Om)
\le e^{(2C_M+\Theta_M)T_\sharp}
\Big(\mathbf 1_{\{t<T_\sharp\}}\abs{v_0}_{BV}
+\!\!\int_{(t-T_\sharp)_+}^{t}\!\!\big[C\abs{e(s)}
+(2C_2+C_M)\norm{v(s)}_{L^1}\big]ds\Big).
\]
Bounding $\norm{v(s)}_{L^1}$ by Lemma~\ref{lem:app-window} and using
$\norm{v_0}_{L^1}\le(l_m-l_0)\norm{v_0}_{BV}$, the window integral is at most
$C\big(\sup_{[(t-T_\sharp)_+,t]}\abs{e}+\mathbf 1_{\{t<T_\sharp\}}\norm{v_0}_{BV}\big)$,
which gives \eqref{eq:app-v-bv}.
\end{proof}

\subsection{Backward adjoint and the quadratic bound}
\label{subsec:app-quadratic}

Fix $\omega\in(\omega_0,0)$. For the duality estimate we use, on each window
$[t-T_\sharp,t]$, the backward transport adjoint $\psi(\cdot,\cdot)$ defined by
\begin{equation}
\label{eq:app-backadj}
-\partial_s\psi-g^*\partial_l\psi+(\mu^*+u)\psi=0,\qquad s<t;\qquad
\psi(t,\cdot)=\chi,\qquad \psi(s,l_m)=0 .
\end{equation}
By the method of characteristics and $c_E\ge0$,
\begin{equation}
\label{eq:app-psi-bounds}
\norm{\psi}_{L^\infty}\le\norm{\chi}_{L^\infty},\qquad
\norm{\partial_l\psi}_{L^\infty}\le C_\psi
:=g_M^{-1}\big(\norm{\chi'}_{L^\infty}+(C_M+\Theta_M)g_M^{-1}\norm{\chi}_{L^\infty}\big)
e^{(C_M+\Theta_M)T_\sharp/g_M},
\end{equation}
and $\psi(s,\cdot)\equiv0$ for $s\le t-T_\sharp$ (backward nilpotency).

\begin{lemma}[Quadratic remainder]
\label{lem:app-quadratic}
Assume \eqref{eq:model-C2E}. There is a constant $C=C(C_M,C_2',\Theta_M,g_M,
\norm{\chi}_{W^{1,\infty}},\norm{x^*}_{BV},\Om,\omega)$ such that
\begin{equation}
\label{eq:app-quadratic}
\norm{\Gcal[e,v_0]}_\omega
\le
C\big(\norm{v_0}_{BV}+\norm{e}_\omega\big)^2 .
\end{equation}
\end{lemma}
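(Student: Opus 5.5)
We prove the bound by a duality representation of $\Gcal$ with the backward adjoint \eqref{eq:app-backadj}, followed by a pointwise-in-time estimate of each remainder term and a weighted convolution bound. Fix $t\ge0$ and let $\psi$ solve \eqref{eq:app-backadj} on $[(t-T_\sharp)_+,t]$. Pair the nonlinear deviation equation \eqref{eq:app-v-eq}--\eqref{eq:app-v-bc}, rewritten as the frozen linear operator at $(g^*,\mu^*)$ plus remainders, with $\psi$ and integrate over $s\in[(t-T_\sharp)_+,t]$. Backward nilpotency ($\psi\equiv0$ for $s\le t-T_\sharp$) removes the lower time boundary for $t\ge T_\sharp$. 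For $t<T_\sharp$ it leaves the term $\langle\psi(0),v_0\rangle$, which together with the first-order parts reproduces $f+\kappa*e$ by Proposition~\ref{prop:main-renewal}. What remains is
\[
\Gcal[e,v_0](t)=\int_{(t-T_\sharp)_+}^{t}\Big[\langle R_g(s)x^*,\partial_l\psi(s)\rangle-\langle R_\mu(s)x^*,\psi(s)\rangle+R_g(s,l_0)x^*(l_0)\psi(s,l_0)\Big]ds+\int_{(t-T_\sharp)_+}^{t}\Big[\langle \Delta g(s)\,v(s),\partial_l\psi(s)\rangle-\langle\Delta c(s)\,v(s),\psi(s)\rangle\Big]ds .
\]
The derivative of the interior source has been moved onto $\psi$ by integration by parts; the boundary term at $l_m$ vanishes because $\psi(s,l_m)=0$.

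Next, estimate the terms pointwise in $s$ using \eqref{eq:app-psi-bounds}.

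\emph{Coefficient remainders.} The Taylor bounds \eqref{eq:app-taylor} give $|R_g|+|R_\mu|\le\tfrac12C_2'e^2$. Hence the first integrand is bounded by $C(\norm{\chi}_\infty+C_\psi)(\norm{x^*}_{L^1}+x^*(l_0))\,e(s)^2$.

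\emph{Genuinely nonlinear products.} Since $|\Delta g|+|\Delta c|\le C_M|e|$, the second integrand is bounded by $C|e(s)|\,\norm{v(s)}_{L^1}$. Writing the transport term as $\Delta g\,v\,\partial_l\psi$ avoids needing $\partial_l v$, so Lemma~\ref{lem:app-v-bv-window} serves only as a backup. We then insert Lemma~\ref{lem:app-window}:
\[
\norm{v(s)}_{L^1}\le C_\sharp\int_{(s-T_\sharp)_+}^s|e|+\mathbf 1_{\{s<T_\sharp\}}\norm{v_0}_{L^1}.
\]

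It remains to pass to the $\omega$-weighted norm. Use $|e(\tau)|\le e^{\omega\tau}\norm{e}_\omega$. Because every integration window has length at most $T_\sharp$ (or $2T_\sharp$ for the nested one), each factor $e^{\omega\tau}$ with $\tau\in[s-2T_\sharp,t]$ is at most $e^{2|\omega|T_\sharp}e^{\omega t}$. Therefore $e(s)^2\le e^{2\omega s}\norm{e}_\omega^2\le e^{\omega t}e^{|\omega|T_\sharp}\norm{e}_\omega^2$, where we used $\omega<0$ and $e^{\omega s}\le1$. Similarly
\[
|e(s)|\,\norm{v(s)}_{L^1}\le C e^{\omega t}\big(\norm{e}_\omega^2+\norm{e}_\omega\norm{v_0}_{BV}\big),
\]
using $\norm{v_0}_{L^1}\le(l_m-l_0)\norm{v_0}_{BV}$. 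On the initial window $t<T_\sharp$, any factor $e^{-\omega t}\le e^{|\omega|T_\sharp}$ is absorbed into the constant. Integrating over a window of length at most $T_\sharp$ and multiplying by $e^{-\omega t}$ gives \eqref{eq:app-quadratic}, with a constant depending on $\omega$ only through $e^{2|\omega|T_\sharp}$.

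The main obstacle is the first step: justifying the duality identity rigorously. The difficulty is that the nonlinear equation is posed with the perturbed velocity $g(E^*+e,\cdot)$, while $\psi$ is built from $g^*$. One must check that after splitting $g(E^*+e)=g^*+\Delta g$, all mixed terms land either in the linear kernel $\kappa$ or in $\Gcal$, with none left unaccounted. One must also confirm that the boundary flux correction at $l_0$ matches the $-a_0\Sigma_b$ part of $\kappa$ up to $R_g(s,l_0)x^*(l_0)$ and the product $\Delta g(s,l_0)v(s,l_0)$. The latter is again bounded by $C|e|^2$ via \eqref{eq:app-v-bc}. I would do this first for smooth $v_0$, for which $v$ is Lipschitz, and then pass to $BV$ data by $L^1$-stability of the transport flow.
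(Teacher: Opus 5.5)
Your proposal has the same structure as the paper's proof. You pair the deviation equation with the backward adjoint $\psi$ of \eqref{eq:app-backadj} on $[(t-T_\sharp)_+,t]$, and let the first-order parts of $\Delta g,\Delta c$ and the inflow flux reproduce $f+\kappa*e$. You split the rest into second-order coefficient terms (the paper's $P_1,P_2$) and nonlinear products ($P_3,P_4$), and then run the same $\omega$-weighted window assembly.

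The one real difference is the transport product. The paper keeps it as $-\int\Delta g\,\partial_l v\,\psi$ and controls it by $\abs{v(s)}_{BV}$ from Lemma~\ref{lem:app-v-bv-window}, which needs only $\norm{\psi}_\infty\le\norm{\chi}_\infty$. You integrate by parts to $\langle\Delta g\,v,\partial_l\psi\rangle$ and use only the $L^1$ comparison of Lemma~\ref{lem:app-window}.

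What you call the main obstacle is a short computation. Since $g^*(l_0)v(s,l_0)=-\Delta g(s,l_0)\big(x^*(l_0)+v(s,l_0)\big)$, integrating both $-\partial_l(\Delta g\,x^*)$ and $-\partial_l(\Delta g\,v)$ by parts makes every $l_0$ term cancel:
\[
\frac{d}{ds}\langle v(s),\psi(s)\rangle
=\langle\Delta g\,(x^*+v),\partial_l\psi\rangle-\langle\Delta c\,(x^*+v),\psi\rangle .
\]
Its first-order part is $e(s)\langle\sigma,\psi(s)\rangle=e(s)\kappa(t-s)$, using the co-load pairing of Section~\ref{subsec:model-control}. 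So the term $R_g(s,l_0)x^*(l_0)\psi(s,l_0)$ in your formula should not be there. It is $O(e^2)$, so the bound is unaffected.

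The cost of your shortcut is that the product estimate now rests entirely on the Lipschitz bound in \eqref{eq:app-psi-bounds}; the paper uses that bound only in $P_1$. The bound holds only if $\chi(l_m)=0$. Because $\psi(t,\cdot)=\chi$ while $\psi(s,l_m)=0$, for $s\in(t-t_\sharp,t)$ the function $\psi(s,\cdot)$ jumps by $\chi(l_m)$ times a survival factor. The jump sits at the point whose $g^*$-characteristic reaches $l_m$ exactly at time $t$, so $\partial_l\psi(s,\cdot)$ carries an atom of mass up to $\norm{\chi}_\infty$.

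In $P_1$ the atom is harmless, because it meets the continuous function $R_g x^*=O(e^2)$. In your product term it meets $\Delta g\,v(s,\cdot)$ at a single prescribed point. An $L^1$ bound on $v(s,\cdot)$ does not control its value there.

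You therefore need a pointwise window bound on $v$. One option is $\norm{v(s)}_{L^\infty}\le\abs{v(s)}_{BV}+(l_m-l_0)^{-1}\norm{v(s)}_{L^1}$ combined with Lemma~\ref{lem:app-v-bv-window}; another is a direct sup-norm estimate along characteristics. For general $\chi\in W^{1,\infty}$, the $BV$ window lemma is thus a necessary ingredient, not a backup. Only when $\chi(l_m)=0$ is your route actually more economical than the paper's.
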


\begin{proof}
Pair \eqref{eq:app-v-eq} with $\psi$ of \eqref{eq:app-backadj} over the window
$Q_t:=[(t-T_\sharp)_+,t]\times\Om$ and integrate by parts. By construction the
first-order parts of $\Delta g,\Delta c$ (namely $e\,\partial_Eg^*$,
$e\,\partial_E\mu^*$) together with the boundary term $-a_0e$ reproduce the linear
kernel $\kappa$ and free term $f$; what remains is
\[
\Gcal[e,v_0](t)
=\underbrace{-\!\int_{Q_t}\!\!R_g\,x^*\,\partial_l\psi}_{P_1}
\;\underbrace{-\!\int_{Q_t}\!\!R_\mu\,x^*\,\psi}_{P_2}
\;\underbrace{-\!\int_{Q_t}\!\!\Delta g\,\partial_l v\,\psi}_{P_3}
\;\underbrace{-\!\int_{Q_t}\!\!\Delta c\,v\,\psi}_{P_4}
\;-\;R_g(\cdot,l_0)x^*(l_0)\psi(\cdot,l_0),
\]
the boundary remainder being of the same quadratic type as $P_1$.

\emph{Terms $P_1,P_2$ (second-order coefficient).} By \eqref{eq:app-taylor},
$\abs{R_g}+\abs{R_\mu}\le\frac12 C_2'e^2$ and $\abs{\partial_l R_g}\le\frac12
C_2'e^2$, and by \eqref{eq:app-psi-bounds} $\norm{\psi}_\infty\le\norm{\chi}_\infty$,
$\norm{\partial_l\psi}_\infty\le C_\psi$. Hence, with $\norm{x^*}_{L^1}\le M_x(T)$,
\[
\abs{P_1}+\abs{P_2}
\le C_2'\big(C_\psi+\norm{\chi}_\infty\big)\norm{x^*}_{L^1}
\int_{(t-T_\sharp)_+}^{t}\abs{e(s)}^2\,ds .
\]

\emph{Terms $P_3,P_4$ (nonlinear products).} Using $\abs{\Delta g}\le C_M\abs{e}$,
$\abs{\Delta c}\le C_M\abs{e}$, $\norm{\psi}_\infty\le\norm{\chi}_\infty$ and the
window comparisons of Lemmas~\ref{lem:app-window} and
\ref{lem:app-v-bv-window} for $\norm{v(s)}_{L^1}$ and $\abs{v(s)}_{BV}$,
\[
\abs{P_3}+\abs{P_4}
\le C_M\norm{\chi}_\infty
\int_{(t-T_\sharp)_+}^{t}\abs{e(s)}
\Big(\abs{v(s)}_{BV}+\norm{v(s)}_{L^1}\Big)\,ds .
\]

\emph{Weighted assembly.} Multiply by $e^{-\omega t}$ and take the supremum over
$t$. For $P_1,P_2$, since $\abs{e(s)}\le\norm{e}_\omega e^{\omega s}$,
\[
e^{-\omega t}\!\int_{(t-T_\sharp)_+}^{t}\!\abs{e(s)}^2ds
\le \norm{e}_\omega^2\,e^{-\omega t}\!\int_{(t-T_\sharp)_+}^{t}\!e^{2\omega s}ds
\le \norm{e}_\omega^2\,T_\sharp e^{\abs{\omega}T_\sharp}.
\]
For $P_3,P_4$, Lemmas~\ref{lem:app-window} and \ref{lem:app-v-bv-window} give
$\norm{v(s)}_{L^1}+\abs{v(s)}_{BV}\le C_\sharp''\big(\sup_{[s-T_\sharp,s]}\abs{e}
+\mathbf 1_{\{s<T_\sharp\}}\norm{v_0}_{BV}\big)$, whence
\[
e^{-\omega t}\!\int_{(t-T_\sharp)_+}^{t}\!\abs{e(s)}\big(\norm{v(s)}_{L^1}
+\abs{v(s)}_{BV}\big)ds
\le C_\sharp''\,T_\sharp e^{\abs{\omega}T_\sharp}
\big(\norm{e}_\omega+\norm{v_0}_{BV}\big)\norm{e}_\omega .
\]
Collecting the four contributions,
\[
\norm{\Gcal[e,v_0]}_\omega
\le L_2\Big(\norm{e}_\omega^2
+\big(\norm{v_0}_{BV}+\norm{e}_\omega\big)\norm{e}_\omega\Big)
\le C\big(\norm{v_0}_{BV}+\norm{e}_\omega\big)^2,
\]
with $L_2:=3C_*T_\sharp e^{\abs{\omega}T_\sharp}$ and $C_*$ collecting
$C_2',C_M,C_\psi,\norm{\chi}_{W^{1,\infty}},\norm{x^*}_{BV},C_\sharp,C_\sharp',
C_\sharp''$. This is \eqref{eq:app-quadratic}, i.e.\ the hypothesis
\eqref{eq:main-quadratic-remainder}.
\end{proof}

Lemmas~\ref{lem:app-window}--\ref{lem:app-quadratic} supply the window comparison
and the quadratic remainder bound used in the proof of
Theorem~\ref{thm:main-nonlinear-stability}; under \eqref{eq:model-C2E} the
conclusion of that theorem therefore holds unconditionally.

\bibliography{reference}

\end{document}